%% file: Quasi_Affineness.tex
\pdfoutput=1
\newif\ifpersonal
\documentclass[11pt,a4paper,dvipsnames,x11names]{amsart}

\usepackage{amsmath,amsthm,amssymb,mathrsfs,mathtools,tensor,eucal,thmtools} 
\usepackage[all,cmtip]{xy}

\usepackage[LGR,T1]{fontenc} 
\usepackage[utf8]{inputenc} 
\usepackage{CJKutf8}

\usepackage{microtype,inconsolata} 
\usepackage{enumerate,comment,braket,xspace,tikz,tikz-cd,csquotes} 
\usetikzlibrary{shapes.geometric}
\usepackage[centering,vscale=0.7,hscale=0.7]{geometry}
\usepackage{appendix}
\usepackage[hidelinks,colorlinks=true,linkcolor=Red4,citecolor=RoyalBlue,hypertexnames=false]{hyperref}

\usepackage[capitalize]{cleveref}
\usepackage{xcolor}
\usepackage{xpatch}

\usepackage{mathpazo}
\usepackage{euler}

\usepackage{stackrel}
\usepackage{faktor}
\usetikzlibrary{decorations.markings}

\numberwithin{equation}{subsection}
\theoremstyle{plain}
\newtheorem{theorem}[equation]{Theorem}
\newtheorem{lemma}[equation]{Lemma}
\newtheorem*{maintheorem}{Main Theorem}
\newtheorem*{example*}{Example}
\newtheorem*{definition*}{Definition}
\newtheorem{proposition}[equation]{Proposition}

\newtheorem{corollary}[equation]{Corollary}

\theoremstyle{definition}
\newtheorem{definition}[equation]{Definition}
\newtheorem{notation}[equation]{Notation}
\newtheorem{example}[equation]{Example}
\newtheorem{remark}[equation]{Remark}

\newtheorem{recollection}[equation]{Recollection}
\newtheorem{warning}[equation]{Warning}

\newcommand{\MP}[1]{\textcolor{blue}{#1}}

\input{newcommand}

\begin{document}

\title{(Quasi-)affineness of perverse character varieties}
\author{Enrico Lampetti, Michele Pernice}
\address{Sorbonne Université and Université Paris Cité, CNRS, IMJ-PRG, F-75005 Paris, France}
\email{enrico.lampetti@imj-prg.fr}
\address{University of Washington, Seattle, Washington}
\email{mpernice@uw.com}

\subjclass[2020]{}
\keywords{}

\begin{abstract}
    We show that perverse character varieties are (quasi-)affine.
    We do this in a purely stack-theoretic fashion, by exhibiting enough sections of the structure sheaf.
\end{abstract}

\maketitle

\tableofcontents

\section{Introduction}
\addtocontents{toc}{\protect\setcounter{tocdepth}{1}}

	Given a finitely presented group $\Gamma$, the character variety $\mathrm{Char}(\Gamma)$ is an affine scheme parametrizing finite-dimensional semisimple representations of $\Gamma$. The first systematic treatment of character varieties is given by Lubotzky--Magid \cite{LM85}, but they already appeared in the work of Weil \cite{Wei60, Wei62}, Artin \cite{Art69} and Procesi \cite{Pro75}.
	A modern treatment can be found in the work of Sikora \cite{Sik12}. They have been widely studied in the literature and present a rich geometry.
    A central example is the case $\Gamma = \pi_1(X,x)$, where $X$ is a topological space with finitely presented fundamental group. For instance, in the case of $X$ an (open) Riemann surfaces, character varieties give examples of (Poisson, respectively) symplectic varieties \cite{Gol84}.
    
	The so-called monodromy equivalence allows one to interpret $\mathrm{Loc}(X)^\heartsuit \coloneqq \mathrm{Char}(\pi_1(X,x))$ as an algebraic object parametrizing locally constant sheaves on $X$.
    Locally constant sheaves and their moduli arise in many areas of geometry and physics.
	Moreover, locally constant sheaves are related to vector bundles with connections via the Riemann-Hilbert correspondence \cite{Del70} and to Higgs bundles via the non-abelian Hodge correspondence \cite{Sim92}.\medskip
	
	In order to present our main result, we need to take a more intrinsic point of view on character varieties.
	For any $X$ as above, we can construct the stack of locally constant sheaves, or character stack $\mathbf{Loc}(X)^\heartsuit$. The character stack carries more information than the character variety: indeed, while the character variety parametrizes semisimple locally constant sheaves, the stack $\mathbf{Loc}(X)^{\heartsuit}$ parametrizes all locally constant sheaves or, equivalently, all representations of the fundamental group, and its underlying topological space parametrizes conjugacy classes $[\rho]$ of representations of the fundamental group of $X$. Notice that each point has a unique closed point in its closure and such closed point corresponds to the conjugacy class $[\rho^{ss}]$ of its semisimplification, i.e. the associated graded to any Jordan--Holder filtration of $\rho$.
	The class $[\rho^{ss}]$ can be also seen as a point of the character variety $\Loc(X)$ and we get a well defined morphism
\begin{align*}
q \colon \mathbf{Loc} & (X)^\heartsuit \to \Loc(X)^\heartsuit \\
& [\rho] \mapsto [\rho^{ss}] \ .
\end{align*}
	The morphism $q$ defined above is an example of good moduli space, which is the most natural generalization of good quotients in the language of algebraic stacks (see \Cref{sec:gms} for a more detailed discussion). \medskip
	
	The stack $\mathbf{Loc}(X)^\heartsuit$ introduced above is an open substack of a bigger stack $\mathbf{Loc}(X)$, where the latter parametrizes complexes of sheaves on $X$ with locally constant cohomologies. 
    Notice that complexes of sheaves with locally constant cohomologies are a particular example of complexes of sheaves with constructible cohomology and, indeed, the theory generalizes in the constructible setting.
    If $P$ is a stratification of $X$ verifying some mild hypothesis, Haine-Porta-Teyssier in \cite{HPT26} constructed a (derived) stack $\mathbf{Cons}_P(X)$ parametrizing complexes of sheaves with $P$-constructible cohomology.
	Inside this stack, one can find open substacks parametrizing perverse sheaves ${}^{\mathfrak{p}}\mathbf{Perv}_P(X)$, depending on a perversity function $\mathfrak{p} \colon P \to \ZZ$.
	We will refer to these as \textit{perverse character stacks}.\medskip

	An axiomatic background for perverse sheaves has been introduced in the influential paper \cite{BBDG18}.
    The most fundamental and studied examples of perverse sheaves are the one associated to the zero perversity (where one recovers the abelian category of constructible sheaves) and the middle perversity (related to the theory $\mathcal{D}$-modules via a far-reaching generalization of the Riemann--Hilbert correspondence mentioned above \cite{Kas84, Meb84}).
	In both cases, generalizing the aforementioned results for classical character stacks, perverse character stacks admit shifted Poisson structures \cite{CL26} and good moduli spaces ${}^{\mathfrak{p}}\Perv_P(X)$ \cite{Lam25Perv}.
	We will refer to these good moduli spaces as \textit{perverse character varieties}.
	Its points are in bijection with semisimple perverse sheaves.
	In this paper we are concerned with another feature of perverse character varieties, that is, their affineness.
	This leads to the main result of this paper: 
\begin{maintheorem}[{\cref{corollary:quasi_affine_Perv}}]\label{thm_intro:quasi_affineness}
	Perverse character varieties are quasi-affine, that is, the canonical morphism
\[
{}^{\mathfrak{p}}\Perv_P(X) \to \Spec(\Gamma({}^{\mathfrak{p}}\Perv_P(X), \OO_{{}^{\mathfrak{p}}\Perv_P(X)}))
\]
is a quasi-compact open immersion.
\end{maintheorem}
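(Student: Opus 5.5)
Our approach is to deduce quasi-affineness of ${}^{\mathfrak{p}}\Perv_P(X)$ from the structure of the stack ${}^{\mathfrak{p}}\mathbf{Perv}_P(X)$ and its good moduli space $q\colon {}^{\mathfrak{p}}\mathbf{Perv}_P(X)\to {}^{\mathfrak{p}}\Perv_P(X)$, without using an explicit presentation. The key input is that a good moduli space morphism satisfies $q_*\OO = \OO$, so
\[
\Gamma({}^{\mathfrak{p}}\Perv_P(X),\OO)=\Gamma({}^{\mathfrak{p}}\mathbf{Perv}_P(X),\OO).
\]
It therefore suffices to exhibit enough global functions on the \emph{stack}. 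We will then apply the criterion that a quasi-compact, quasi-separated scheme (or algebraic space) $Y$ is quasi-affine if and only if the loci $Y_f$, for $f\in\Gamma(Y,\OO_Y)$ with $Y_f$ affine, cover $Y$ (EGA II, 5.1.2).

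\textbf{Step 1: construct functions.} The perverse character stack embeds openly in $\mathbf{Cons}_P(X)$, which is presented via exit-path (or stratified) representations as a mapping stack $\mathbf{Map}(\mathrm{Exit}_P(X),\mathbf{Perf})$ from a finite $\infty$-category. Natural global functions come from traces, or more generally characteristic-polynomial coefficients, of endomorphisms induced by composable loops and exit-paths acting on the stalks. Concretely, for each stratum $p$ and each endomorphism $\gamma$ of a point in $\mathrm{Exit}_P(X)$, the class $\mathrm{tr}(\gamma \mid \mathcal F_x)\in\Gamma(\OO)$ is well defined on the stack, since it is invariant under automorphisms, and likewise for the Euler characteristics of the cohomology stalks of the perverse truncations. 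We also use functions of the form $\det$ of suitable maps (for instance, determinants of $\mathrm{var}$/$\mathrm{can}$-type maps in a quiver description), which are invertible exactly on prescribed open loci.

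\textbf{Step 2: cover by affine loci.} We stratify ${}^{\mathfrak{p}}\Perv_P(X)$ by the discrete data, namely the ranks of the stalk cohomologies on each stratum, which is locally constant and yields finitely many components of finite type. On each piece we aim to find $f$ such that the nonvanishing locus $\{f\neq 0\}$ in the stack is identified with an open substack whose good moduli space is affine. The model case is a character variety of a finite quiver with relations (or of a finitely presented group) with a determinant inverted, whose GIT quotient is affine, and whose good moduli space is therefore $\Spec$ of invariants. Since good moduli spaces are compatible with saturated open substacks, and $\{f\neq 0\}$ is the preimage of an open subset, hence saturated, we obtain that $({}^{\mathfrak{p}}\Perv_P(X))_f$ is affine. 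Quasi-compactness, that is, finitely many such $f$ on each component, follows from finite type. The open immersion statement then follows from the cited criterion.

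\textbf{Main obstacle.} We expect the main difficulty to be Step 2: perversity conditions are open but not cut out by a single affine chart in the naive representation variety, since the stack is not globally a quotient of an affine scheme by $\mathrm{GL}_n$. We would first try to use the local-to-global description of perverse sheaves (gluing data of MacPherson--Vilonen type along strata) to realize ${}^{\mathfrak{p}}\mathbf{Perv}_P(X)$ locally as $[U/G]$ with $U$ affine and $U$ equal to the nonvanishing locus of a $G$-semi-invariant determinant. This semi-invariant would be upgraded to an invariant by multiplying with an appropriate character, possibly after passing to powers. Verifying that these determinants are genuinely global functions on the stack, which requires their independence of choices, is the crux.
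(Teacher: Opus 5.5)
\textbf{Verdict: genuine gap.} Your reduction to global functions on the stack via $q_\ast\OO=\OO$ and the trace functions of Step 1 agree with the paper. There these functions arise from Hochschild homology, and only traces of loops inside a single stratum acting on $i_p^\ast F$ are needed.

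The gap is Step 2, which carries all the content. You never construct an $f$ with $({}^{\mathfrak p}\Perv_P(X))_f$ affine. By EGA II 5.1.2, a cover by such affine loci is itself one of the equivalent formulations of quasi-affineness, so Step 2 restates the conclusion rather than proving it.

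The mechanism you suggest is also not available.
\begin{itemize}
\item There is no general presentation of open pieces of ${}^{\mathfrak p}\mathbf{Perv}_P(X)$ as $[U/G]$ with $U$ affine and equal to the non-vanishing locus of a single determinant. Quiver descriptions with $\mathrm{can}/\mathrm{var}$ maps exist only in special situations. MacPherson--Vilonen gluing is inductive, and the perversity condition is open but not of this shape.
\item A determinant of a map $V\to W$ is a section of the nontrivial line bundle $\det(V)^{\vee}\otimes\det(W)$ on the stack, i.e.\ a semi-invariant, not a function. Multiplying by a character of $G$ does not turn it into an invariant function.
\end{itemize}

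\textbf{The missing idea: no affine charts are needed.} Each connected component of the good moduli space is a separated algebraic space $Y$ of finite type. For such a $Y$ it suffices that global functions separate points. Then $Y\to\Spec\Gamma(Y,\OO_Y)$ is injective on points, hence locally quasi-finite. Being also quasi-compact and separated, it is a quasi-compact open immersion by Zariski's Main Theorem (\cref{lemma:derived_quasi_affineness}, \cref{corollary:quasi_affineness_good}).

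Since points of the good moduli space are semisimple perverse sheaves, what you need is exactly that your Step 1 trace functions separate semisimple perverse sheaves. The paper proves this in \cref{theorem:separating_points_Perv} as follows.
\begin{enumerate}
\item Write $F\simeq\bigoplus_p (i_{\le p})_\ast (j_p)_{!\ast}F_p$ with the $F_p$ semisimple local systems, and similarly for $G$.
\item Take $q$ maximal in the support of $F$. Euler-characteristic functions, i.e.\ traces of constant loops, force $q$ to be maximal in the support of $G$ as well, with equal ranks; otherwise the two points are already separated.
\item Then $i_q^\ast F$ and $i_q^\ast G$ are $F_q$ and $G_q$ up to shift. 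Traces of loops in $X_q$ therefore separate them unless $F_q\simeq G_q$, because characters separate semisimple representations of a finitely generated group in characteristic zero.
\item If $F_q\simeq G_q$, remove the common summand $(i_{\le q})_\ast(j_q)_{!\ast}F_q$ using additivity of traces, and induct on the size of the support.
\end{enumerate}

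Note also that, as in the paper, the conclusion can only be reached component by component. There are infinitely many Euler-characteristic components, so the whole good moduli space is not quasi-compact; it does not have finitely many components as your Step 2 suggests.
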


    We want to stress that, although we only prove the quasi-affineness of the good moduli space, the techniques used are completely independent of GIT constructions and thus they fall in the territory of the ``Beyond GIT'' moduli theory: see \cite{AlperHalLei} for a thorough presentation of this more recent intrinsic approach to moduli theory.
    
    The main contribution to the literature is to give an effective way of constructing global sections of the structural sheaf $\OO_{{}^{\mathfrak{p}}Perv_P(X)}$. Informally, to any loop $\gamma$ contained in a stratum $i_p \colon X_p \hookrightarrow X$, we associate the global section that evaluated on a semisimple perverse sheaf $F$ gives back the trace of the monodromy associated to $\gamma$ on the locally constant sheaf $i_p^\ast F$.
	We show that such global sections are enough to separate points, see \Cref{theorem:separating_points_Perv}.
	Both schemes appearing in the Main Theorem above admit natural derived enhancement and we show that the result holds in the derived setting.
	Notice that by Lurie's \cite[Proposition 2.4.2.3]{Lur18}, the derived and underived statements are actually equivalent.
	Since our methods, naturally provide sections of the structural sheaf of the derived enhancement of ${}^{\mathfrak{p}}\Perv_P(X)$, we decided to work directly in the framework of derived algebraic geometry.
	All our results on quasi-affineness holds with the same proof in classical algebraic geometry. \medskip
	
	In what remains, we explain informally the construction of these global sections using Hochschild homology.
    The stack $\mathbf{Cons}_P(X)$ is an example of moduli of objects $\M_\C$ in a stable compactly generated $\infty$-category $\C$ introduced by Toën-Vaquié \cite{TV07}, and revisited by Antieau-Gepner \cite{AG14}.
	More precisely, for $k$ a commutative ring and $\C = \Cons_P(X; \Mod_k)$, there is an equivalence of derived stacks over $k$
\[
\mathbf{Cons}_P(X) \simeq \M_\C \ .
\]
	For any stable compactly generated $\infty$-category $\C$, one can consider its Hochschild homology $\HH(\C)$.
	Hochschild homology can be used to construct $k$-forms on $\M_\C$, $k \in \N$: see for example \cite{BD21}, where a relation between Calabi--Yau structures on $\C$ and shifted symplectic forms on $\M_\C$ is established.
	In the special case of $n=0$, there is a canonically defined morphism
\[
\HO_\C \colon \HH(\C) \to \mathrm{R}\Gamma(\M_\C, \OO_{\M_\C}) \ .
\]
	Hence taking $H^0$ on both sides and setting $\C = \Cons_P(X; \Mod_k)$, we have that Hochschild homology $\HH(\Cons_P(X; \Mod_k))$ gives rise to global sections of the derived stack $\mathbf{Cons}_P(X)$.
    Following \cite{CL26b}, this map can be explicitely described: in \cref{lemma:HH_Cons} we describe $\HH(\Cons_P(X; \Mod_k))$ and in \cref{corollary:HO_Cons} we describe the morphism $\HO_\C$ is this setting.
	Restricting to the open substack ${}^{\mathfrak{p}}\mathbf{Perv}_P(X)$, this gives a map to global sections of the 
    perverse character stack.
	Since by definition the morphism to the good moduli space $q \colon {}^{\mathfrak{p}}\mathbf{Perv}_P(X) \to {}^{\mathfrak{p}}\Perv_P(X)$ induces an equivalence $\OO_{{}^{\mathfrak{p}}\Perv_P(X)} \simeq q_\ast \OO_{{}^{\mathfrak{p}}\mathbf{Perv}_P(X)}$, we have that global sections of the structural sheaf of ${}^{\mathfrak{p}}\mathbf{Perv}_P(X)$ agree with global sections of the structural sheaf of ${}^{\mathfrak{p}}\Perv_P(X)$.
	
\section*{Acknowledgments}

	E.L. thanks his advisors Mauro Porta and Jean-Baptiste Teyssier for their constant support.
	E.L. also wish to thank Peter Haine, Marc Hoyois, Wenyuan Li, Nicolò Sibilla and Sarah Scherotzke for sharing their insights about the content of \cref{subsection:HO}.
	E.L. thanks the University of Washington, where this project was conceived, for the excellent working conditions.
	In particular, E.L. wish to thank Jarod Alper for the hospitality and for his interest and encouragement about the content of the present paper.
    The authors wish to thank Ian Selvaggi for many fruitful discussions.

\addtocontents{toc}{\protect\setcounter{tocdepth}{2}}
\section{Good moduli spaces}\label{sec:gms}

	In this section, we recall the definition of good moduli spaces and their main properties. Good moduli spaces for algebraic stacks were introduced by Alper in \cite{Alp13} and have played a prominent role in moduli theory as generalizations of Mumford's good GIT quotients \cite{Mum94} and Abramovich--Olsson--Vistoli's tame stacks \cite{AOV08}.  
	If a good moduli space $q \colon \X \to X$ exists, then it is unique, as $q$ is universal for maps from $\X$ to algebraic spaces (\cite[Theorem 6.6]{Alp13}).
    Similarly to the Keel--Mori Theorem \cite{KM97}, a fundamental result of Alper--Halpern-Leistner--Heinloth \cite[Theorem A]{AHLH23} provides an intrinsic way to show that an algebraic stack admits a good moduli space.
    The existence of a good moduli space for an algebraic stack $\X$ has many pleasant consequences.
	It allows for example to give a local presentation of $\X$ by quotient stacks and prove the compact generation of $\QCoh(\X)$ (\cite{AHR25}). 
	Also, the existence of good moduli spaces can be exploited to construct BPS Lie algebras and unlock the study of the cohomology of $\X$ via cohomological Hall algebras, see e.g. \cite{DHM22, Dav24, Hen24, BDN+25, HK25}. \medskip

\begin{definition}[{\cite[Definition 4.1]{Alp13}}]
	Let $q\colon \X \to X$ be a qcqs morphism over an algebraic space $S$ with $\X$ an algebraic stack and $X$ an algebraic space. 
	We say that $q \colon \X \to X$ is a good moduli space if the following properties are satisfied:
\begin{enumerate}\itemsep=0.2cm
    \item the functor $q_{\ast} \colon \QCoh(\X) \to \QCoh(X)$ is $t$-exact;
    \item the canonical morphism $\OO_X \to q_\ast \OO_\X$ is an equivalence.
\end{enumerate}
\end{definition}

	For completeness, we recall some of the main properties of good moduli spaces:

\begin{theorem}[{\cite[Proposition 4.5 \& Theorem 4.16 \& Theorem 6.6]{Alp13}}]\label{good_moduli_properties}
	Let $q \colon \X \to X$ be a good moduli space.
	Then
\begin{enumerate}\itemsep=0.2cm
    \item The functor $q^\ast\colon \QCoh(X)^\heartsuit \to \QCoh(\X)^\heartsuit$ is fully faithful;
    \item the map $q$ is universally closed and surjective;
    \item if $Z_1,Z_2$ are closed substacks of $\X$, then
    \[
    \mathrm{Im}(Z_1) \cap \mathrm{Im}(Z_2) = \mathrm{Im}(Z_1 \cap Z_2)
    \]
    where the intersections and images are scheme-theoretic.
    \item for $\kappa$ an algebraically closed field over $k$, the $\kappa$-points of $X$ are $\kappa$-points of $\X$ up to closure equivalence (see \cite[Theorem 4.16-(iv)]{Alp13} for a precise statement);
    \item if $\X$ is reduced (resp., quasi-
compact, connected, irreducible), then $X$ is also.
	If $\X$ is locally noetherian
and normal, then $X$ is also.
    \item if $\X$ is locally noetherian, then $X$ is locally noetherian and $q_{\ast} \colon \QCoh(\X)^\heartsuit \to \QCoh(X)^\heartsuit$ preserves coherence;
    \item if $k$ is excellent and $\X$ is of finite type, then $X$ is of finite type;
    \item if $\X$ is locally noetherian, then $q \colon \X \to X$ is universal for maps to algebraic spaces.
\end{enumerate}
\end{theorem}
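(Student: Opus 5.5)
Since this statement only recalls \cite{Alp13}, the plan is to reproduce the structure of Alper's arguments, using only the two defining axioms: $t$-exactness of $q_\ast$ and $\OO_X \simeq q_\ast \OO_\X$. Everything is étale local on $X$, and good moduli spaces are stable under flat base change on $X$. So I will first reduce to $X = \Spec A$ affine, where the axioms say that $\Gamma(\X,-)$ is exact on $\QCoh(\X)^\heartsuit$ and $\Gamma(\X,\OO_\X) = A$.

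The first key step is a projection formula: for $F \in \QCoh(X)^\heartsuit$ the natural map $F \to q_\ast q^\ast F$ is an isomorphism. I would prove it by choosing a presentation $A^{\oplus J} \to A^{\oplus I} \to F \to 0$ and applying the right exact functor $q^\ast$ followed by the exact functor $q_\ast$, which commutes with direct sums by quasi-compactness. The five lemma then reduces the claim to $F = A$, which is axiom (2). Statement (1) follows formally from this. The same formula gives stability of the axioms under arbitrary base change $X' \to X$: flat base change handles flat $X'$, and the projection formula handles the general case. This stability is what makes "universally" in (2) automatic.

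For (2) and (3) I would exploit exactness on ideal sheaves. If $\mathcal I \subset \OO_\X$ cuts out a closed substack $Z$, then $q_\ast \OO_Z = \OO_X / q_\ast \mathcal I$, so the scheme-theoretic image of $Z$ is $V(q_\ast \mathcal I)$. Applying $q_\ast$ to the surjection $\mathcal I_1 \oplus \mathcal I_2 \to \mathcal I_1 + \mathcal I_2$ gives $q_\ast(\mathcal I_1 + \mathcal I_2) = q_\ast \mathcal I_1 + q_\ast \mathcal I_2$, which is (3). For closedness, I would note that the image of $Z$ coincides set-theoretically with $V(q_\ast \mathcal I)$: a point outside the image has a neighbourhood where $q_\ast \OO_Z = 0$ and hence $Z$ is empty there. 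Surjectivity follows by base changing to a point, where the fibre is nonempty because its ring of global sections is nonzero. Combining (2) and (3) shows that disjoint closed substacks have disjoint images, so each fibre contains a unique closed point; this gives (4).

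For (5), reducedness passes to $X$ since $\OO_X = q_\ast \OO_\X \subset q_\ast$ of something reduced. Connectedness and irreducibility follow from surjectivity and closedness. Normality is handled by showing $A$ is integrally closed, since it is the ring of invariants inside the normal ring of a smooth cover. For (6), an ascending chain of ideals $J_n \subset A$ gives ideals $q^\ast J_n \cdot \OO_\X$ with $q_\ast(q^\ast J_n \cdot \OO_\X) = J_n$, so the chain embeds into a chain of ideals of a noetherian stack. Coherence is preserved because a coherent sheaf is a quotient of a finite sum of coherent subsheaves, and exactness controls the pushforward.

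The main obstacles are (7) and (8). For (7) I would follow Alper's reduction to a Nagata/Hilbert-type finite generation of invariant rings: present $\X$ étale locally as a quotient and use exactness of invariants together with excellence of $k$. I expect this to be the most technical step. For (8), given $\X \to Y$ with $Y$ an algebraic space, I would first treat affine $Y$, which is immediate from $\Gamma(\X,\OO_\X) = A$. The general case then needs étale descent on $Y$, using (2) and (3) to check that the preimages of an étale cover of $Y$ descend to an étale cover of $X$.
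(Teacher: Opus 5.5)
The paper gives no proof of this statement; it is quoted from \cite{Alp13}, so your sketch has to be measured against Alper's proofs. For (1), (3), surjectivity, universal closedness, (5) and the noetherian half of (6), you are essentially reproducing them, modulo the repairs below. The two items you defer are where the plan breaks down.

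For (7), an \'etale-local presentation of $\X$ as a quotient $[\Spec B/G]$ is not available in the generality of the statement, which imposes nothing on stabilizers or on the diagonal. For an abelian variety $A$ over $k$, the morphism $\mathrm{B}A \to \Spec k$ is a good moduli space in the sense of \cite{Alp13}: every quasi-coherent sheaf on $\mathrm{B}A$ carries the trivial action, so global sections are exact. Yet the stabilizers of a quotient of an affine scheme by an affine group are affine, whereas an \'etale representable morphism to $\mathrm{B}A$ forces them to contain the connected, non-affine group $A$. So $\mathrm{B}A$ admits no such chart. Local structure theorems such as \cite{AHR25} need extra hypotheses and are far deeper than what is being proved. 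As far as I recall, Alper's argument uses no presentation at all. Reduce to $X=\Spec A$; then $A$ is noetherian by (6). The map $q$ is universally submersive, since it stays closed and surjective after every base change. Hence for a smooth atlas $\Spec B \to \X$ with $B$ of finite type, the composite $\Spec B \to \Spec A$ is universally submersive. A finiteness theorem of Fogarty for universally submersive morphisms out of finite type schemes over an excellent base then gives that $A$ is of finite type.

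For (8), items (2) and (3) only control closed subsets and scheme-theoretic images. Together with the affine case they do give universality for maps to schemes, but they cannot produce \'etale morphisms. To descend along an \'etale cover $Y' \to Y$, you must know that $\X\times_Y Y'$ admits a good moduli space $X'$ with $X'\to X$ \'etale and $\X\times_Y Y' \simeq \X\times_X X'$. That is a Luna-type statement needing its own argument. Your plan never uses the locally noetherian hypothesis of (8), which signals that this step is missing.

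There are also smaller repairs. Your closedness sentence asserts, rather than proves, that a point outside $q(|Z|)$ has a neighbourhood where $q_\ast\OO_Z=0$. This follows by applying your surjectivity argument to $Z \to V(q_\ast\mathcal I)$, which is itself a good moduli space.

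The coherence claim in (6) is not justified as written. Write $q_\ast\mathcal F=\bigcup_\lambda M_\lambda$ with each $M_\lambda$ finitely generated. The images of $q^\ast M_\lambda\to\mathcal F$ form a directed family of subsheaves of the noetherian sheaf $\mathcal F$, so they stabilize at the image $\mathcal F'$ of $q^\ast q_\ast\mathcal F\to\mathcal F$. Exactness of $q_\ast$, together with $q_\ast q^\ast\simeq\mathrm{id}$ and the triangle identity, gives $q_\ast\mathcal F'=q_\ast\mathcal F$. Hence some $M_\lambda$ surjects onto $q_\ast\mathcal F$.

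Exactness of the pushforward after a non-flat base change $X'\to X$ comes from $\X\times_X X'\to\X$ being affine, not from the projection formula; the latter only gives $\OO_{X'}\simeq q'_\ast\OO_{\X'}$.

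Finally, (4) needs more than uniqueness of the closed point in each fibre. It also needs base change to $\kappa$ and the existence of a $\kappa$-point in the fibre over each $\kappa$-point of $X$.
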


\begin{remark}
	The work of Alper--Halpern-Leistner--Heinloth \cite{AHLH23} shows that in order to prove that an algebraic stack $\X$ admits a good moduli space, it is enough to check intrinsic properties of $\X$.
\end{remark}

\begin{definition}
    Let $\pi: \mathcal{X} \rightarrow X$ be a moduli space morphism with $X$ an algebraic space and let $x \in \mathcal{X}$ be a point. We say that $x$ is polystable if $x$ is closed in $\pi^{-1}(\pi(x))$.
\end{definition}

\begin{remark}
    The previous definition is inspired by the definition of polystable vector bundles (and more recently $K$-polystable Fano varieties). Indeed, if $\mathcal{X}$ is locally of finite type over some algebraic space $S$ and admits a good moduli space, a point $x$ of $\mathcal{X}$ is polystable if and only if there are no non-trivial $\Theta$-degenerations of $x$, namely morphisms $\Theta \rightarrow \mathcal{X}$ which maps the generic point of $\Theta$ to $x$. This follows from \cite[Lemma 3.24]{AHLH23}.

    If $\mathcal{X}$ does not admit a good moduli space, the more general definition of $N$-closed point can be given using the notion of $N$-specializations. This will appear in a future work of David Rydh on topological moduli spaces.
\end{remark}
	Finally, let us recall that a notion of derived good moduli spaces has been introduced in \cite{AHPS23}.
	The following is the main result of that paper:
    
\begin{theorem}[{\cite[Theorem 2.12]{AHPS23}}]\label{AHPS23}
	Let $\X$ be a geometric derived stack.
\begin{enumerate}\itemsep=0.2cm
    \item If $t_0\X$ admits a good moduli space $t_0q \colon  t_0\X \to t_0X$, then $\X$ admits a derived good moduli space $q' \colon  \X \to X$ such that $t_0q' \simeq t_0q$.
    \item If $\X$ admits a derived good moduli space $q \colon  \X \to X$, then $t_0q$ is a good moduli space for $t_0X$.
\end{enumerate}
\end{theorem}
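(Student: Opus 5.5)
The plan is to build the derived good moduli space as the \emph{relative global sections} of $\OO_\X$ over the classical good moduli space $t_0X$. The key input is that the derived structure sheaf of $\X$ has homotopy sheaves $\pi_i\OO_\X$ that are quasi-coherent on the classical truncation $t_0\X$. Here I take a derived good moduli space to be a qcqs morphism $q\colon\X\to X$ to a derived algebraic space such that $q_\ast$ is $t$-exact and $\OO_X\to q_\ast\OO_\X$ is an equivalence.

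\textbf{Part (1), affine case.} Assume first that $t_0X=\Spec A_0$ is affine. Put $A\coloneqq \mathrm{R}\Gamma(\X,\OO_\X)$. Each $\pi_i\OO_\X$ is a quasi-coherent sheaf on $t_0\X$, and $(t_0q)_\ast$ is exact on $\QCoh(t_0\X)^\heartsuit$ with affine target. Hence $\mathrm{R}\Gamma(t_0\X,\pi_i\OO_\X)$ is concentrated in degree $0$. Using the Postnikov tower of $\OO_\X$ (convergent since $\X$ is geometric), I would show that $A$ is connective with $\pi_iA\simeq\Gamma(t_0\X,\pi_i\OO_\X)$. In particular $\pi_0A\simeq\Gamma(t_0\X,\OO_{t_0\X})\simeq A_0$, and we set $X\coloneqq\Spec A$, which satisfies $t_0X\simeq\Spec A_0$. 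The adjunction gives a map $q'\colon\X\to\Spec A$ with $\OO_X\simeq q'_\ast\OO_\X$ by construction. For $t$-exactness of $q'_\ast$, take $\mathcal F\in\QCoh(\X)$. The homotopy sheaves $\pi_i\mathcal F$ are quasi-coherent on $t_0\X$ and are pushed forward along the closed immersion $t_0\X\hookrightarrow\X$, so the same Postnikov argument gives $\pi_i q'_\ast\mathcal F\simeq (t_0q)_\ast\pi_i\mathcal F$. This is exactly $t$-exactness.

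\textbf{Gluing.} In general, I would use that good moduli spaces are stable under flat (in particular étale) base change on the target (Alper). By topological invariance of the étale site, étale maps $U_0\to t_0X$ lift uniquely to derived étale maps into any derived thickening. So one defines a sheaf of connective derived rings on the small étale site of $t_0X$ by
\[
U_0\mapsto \mathrm{R}\Gamma(\X\times_{t_0X}U_0,\OO),
\]
where the fibre product is formed with the unique derived lift. The affine case shows that this sheaf is locally of the form $\Spec A$ and that these local models glue to a derived algebraic space $X$ with $t_0X$ as truncation. Both $t$-exactness and $\OO_X\simeq q'_\ast\OO_\X$ are étale local on the target, so they follow from the affine case, and $t_0q'\simeq t_0q$ holds by construction.

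\textbf{Part (2).} Consider the square formed by $t_0\X\to\X$, $t_0X\to X$, and the maps $q$ and $t_0q$. We have $\OO_{t_0\X}=\pi_0\OO_\X$, so $t$-exactness of $q_\ast$ gives
\[
(t_0q)_\ast\OO_{t_0\X}\simeq\pi_0q_\ast\OO_\X\simeq\pi_0\OO_X=\OO_{t_0X}.
\]
Heart objects of $\QCoh(t_0\X)$ are heart objects of $\QCoh(\X)$ via pushforward along the closed immersion. Pushforward along $t_0X\hookrightarrow X$ is $t$-exact and conservative, so exactness of $(t_0q)_\ast$ on hearts follows from that of $q_\ast$. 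Quasi-compactness and quasi-separatedness are insensitive to truncation.

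\textbf{Main obstacle.} I expect the delicate step to be the identification $\pi_i q_\ast\mathcal F\simeq(t_0q)_\ast\pi_i\mathcal F$. It requires commuting $q_\ast$ with Postnikov limits, that is, convergence of Postnikov towers in $\QCoh(\X)$ and control of $\lim^1$ terms. I would handle this by reducing to an affine smooth atlas, using that $q_\ast$ has finite cohomological dimension on the heart (which is zero here) to obtain Mittag-Leffler.
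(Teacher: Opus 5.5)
The paper gives no proof of this statement; it quotes \cite[Theorem 2.12]{AHPS23}, so your argument has to stand on its own. Its architecture is the natural one: push $\OO_\X$ forward along the small \'etale site of $t_0X$ (which does not see the derived structure), compute homotopy through the quasi-coherent sheaves $\pi_i\OO_\X$, and truncate for (2). However, one step fails as written.

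In the affine case you deduce, from exactness of $(t_0q)_\ast$ on $\QCoh(t_0\X)^\heartsuit$, that $\mathrm{R}\Gamma(t_0\X,\pi_i\OO_\X)$ is concentrated in degree $0$. Exactness on the abelian category does not control higher quasi-coherent cohomology in general. Take an elliptic curve $E$ over a field $K$. Since $\Gamma(E,\OO_E)=K$, every descent datum on $\mathrm{B}E$ is trivial, so $\QCoh(\mathrm{B}E)^\heartsuit\simeq\mathrm{Vect}_K$. Thus $\mathrm{B}E\to\Spec K$ is a good moduli space in the sense of \cite{Alp13}, yet $\mathrm{H}^2(\mathrm{B}E,\OO)\simeq\mathrm{H}^1(E,\OO_E)\neq 0$. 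For $\X=\mathrm{B}E$ your ring $A$ is not even connective, and no good moduli space with $t$-exact pushforward exists. So under the heart-only reading of the hypothesis, (1) would be false; this is exactly the discrepancy behind the paper's warning that the notion of \cite{AHPS23} is stronger than Alper's.

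There are two ways to repair this. The first is to use the hypothesis as the paper's definition states it: $(t_0q)_\ast$ is $t$-exact on the whole stable category $\QCoh(t_0\X)$, so heart objects go to heart objects, which is the vanishing you need. The second is to impose an affine-diagonal hypothesis, under which heart-exactness gives the vanishing by a \v{C}ech argument over a smooth affine atlas. This is the kind of comparison the paper invokes right after the statement, and it is available for perverse character stacks by \cref{affine_diagonal}. Correspondingly, in (2) you should conclude full $t$-exactness of $(t_0q)_\ast$, not only exactness on hearts. Your argument gives this verbatim: with $\iota\colon t_0\X\to\X$ and $\iota'\colon t_0X\to X$, one has $\iota'_\ast(t_0q)_\ast\simeq q_\ast\iota_\ast$ on all of $\QCoh(t_0\X)$, and $\iota'_\ast$ is $t$-exact and detects (co)connectivity.

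Once that vanishing is in place, the step you single out as the main obstacle is harmless. Since $q'_\ast$ is right adjoint to the right $t$-exact functor $q'^\ast$, it is automatically left $t$-exact, so only connective $\mathcal F$ matter. For such $\mathcal F$ the groups $\pi_i q'_\ast\tau_{\le n}\mathcal F$ are constant for $n\ge i$. Hence no $\lim^1$ term appears, left-completeness of $\QCoh(\X)$ suffices, and no atlas is needed.

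What does need an argument is the gluing, which you compress into one sentence. For \'etale maps $V_0=\Spec B_0\to U_0=\Spec A_0$ over $t_0X$, you must show that $A_{U_0}\to A_{V_0}$ is \'etale, i.e.\ $\pi_iA_{V_0}\simeq\pi_iA_{U_0}\otimes_{A_0}B_0$. This follows from flat base change for the qcqs morphism $t_0\X_{U_0}\to U_0$, applied to $\pi_i\OO$. It is exactly what identifies the restriction of $U_0\mapsto A_{U_0}$ to $(U_0)_{\mathrm{et}}$ with the structure sheaf of $\Spec A_{U_0}$. It is also what lets you recognize $\bigl((t_0X)_{\mathrm{et}},U_0\mapsto A_{U_0}\bigr)$ as a derived algebraic space via Lurie's criterion: $\pi_0$ is the structure sheaf of an algebraic space and the $\pi_i$ are quasi-coherent. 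Finally, since there is no map $\X\to t_0X$, your $\X\times_{t_0X}U_0$ must be read as the unique \'etale lift to $\X$ of $t_0\X\times_{t_0X}U_0\to t_0\X$.
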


\begin{remark}
	When applying \cref{AHPS23} one should take some care in general, as the notion of good moduli space in \cite{AHPS23} is stronger then the one of the one in \cite{Alp13, AHLH23}.
	However, the two notions agree under mild conditions (\cite[Proposition 2.5-(iii)]{AHPS23}) which will always be verified in the cases of interest for us.
\end{remark}

\subsection{Quasi-affineness of good moduli spaces}

    By definition, a good moduli space is only an algebraic space.
    In this paragraph we provide a simple criterion (\cref{corollary:quasi_affineness_good}) for a good moduli space to be a quasi-affine scheme.

\begin{definition}[{\cite[\href{https://stacks.math.columbia.edu/tag/01P6}{Tag 01P6}]{stacks-project} \& \cite[Definition 2.4.1.1]{Lur18}}]
	A (derived) scheme $X$ is called quasi-affine if it is quasi-compact and isomorphic to an open subscheme of a (derived) affine scheme.
\end{definition}

	For a derived scheme $X$, the property of being quasi-affine can be checked on it classical truncation.
	Indeed as a special case of \cite[Proposition 2.4.2.3]{Lur18} we have the following
\begin{theorem}\label{thm:quasi_affine_truncation}
	Let $X$ be a derived algebraic space.
	The following are equivalent:
\begin{enumerate}\itemsep=0.2cm
	\item $X$ is quasi-affine;
	\item $t_0 X$ is quasi-affine.
\end{enumerate}
\end{theorem}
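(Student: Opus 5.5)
The plan is to reduce both directions to Lurie's characterization of quasi-affine derived schemes in \cite[Proposition 2.4.2.3]{Lur18}, using the fact that the underlying topological space and the étale site of $X$ agree with those of $t_0X$. Concretely, $X$ is quasi-affine if and only if it is quasi-compact, quasi-separated, and the canonical map
\[
j \colon X \to \Spec(\Gamma(X,\OO_X))
\]
is an open immersion. Equivalently, $X$ is qcqs and $\OO_X$ is ample, i.e.\ the global sections of $\pi_0\OO_X$ generate: the non-vanishing loci $X_f$ for $f\in \pi_0\Gamma(X,\OO_X)$ are affine and cover $X$.

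For (1)$\Rightarrow$(2), take an open immersion $X \hookrightarrow \Spec A$ with $X$ quasi-compact. Truncation preserves open immersions, because open immersions are determined by the underlying open subset and flat base change. Hence $t_0X \hookrightarrow t_0\Spec A = \Spec \pi_0 A$ is an open immersion. Quasi-compactness is topological, so it passes to $t_0X$, and $t_0X$ is quasi-affine.

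For (2)$\Rightarrow$(1), first observe that $X$ is a derived scheme: $t_0X$ is a scheme, and a derived algebraic space whose truncation is a scheme is a scheme, since affineness of an étale or Zariski chart is detected on $t_0$. Likewise $X$ is qcqs because $t_0X$ is. The main obstacle is to lift sections. By the stacks-project criterion, $t_0X$ has finitely many $\bar f_i \in \Gamma(t_0X,\OO)$ with affine loci $(t_0X)_{\bar f_i}$ covering $t_0X$. We need lifts $f_i \in \pi_0\Gamma(X,\OO_X)$.

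To produce these lifts, I would use the descent spectral sequence for $\Gamma(X,\OO_X)$ together with the fact that $t_0X$ is quasi-affine, so its higher quasi-coherent cohomology vanishes. Applying this to each $\pi_n\OO_X$, which is a quasi-coherent sheaf on $t_0X$, gives $H^i(t_0X,\pi_n\OO_X)=0$ for $i>0$. The spectral sequence then degenerates, and
\[
\pi_0\Gamma(X,\OO_X)\cong\Gamma(t_0X,\pi_0\OO_X).
\]
So the $\bar f_i$ lift. The loci $X_{f_i}$ are open derived subschemes whose truncations $(t_0X)_{\bar f_i}$ are affine, and a derived scheme with affine truncation is affine. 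Therefore the $X_{f_i}$ form an affine cover by non-vanishing loci of global functions.

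It remains to check that this gives an open immersion into $\Spec\Gamma(X,\OO_X)$. On each $X_{f_i}$ we have $\Gamma(X_{f_i},\OO)\simeq\Gamma(X,\OO)[f_i^{-1}]$, by the standard localization argument for qcqs $X$. Hence $j$ restricts to an equivalence $X_{f_i}\simeq \Spec(\Gamma(X,\OO)[f_i^{-1}])$. Gluing these equivalences shows $j$ is an open immersion, completing the proof. This is exactly the content of Lurie's cited proposition, which one could instead invoke directly.
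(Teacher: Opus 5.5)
There is a genuine gap in the lifting step, which you rightly identify as the crux. You justify it by saying that, since $t_0X$ is quasi-affine, its higher quasi-coherent cohomology vanishes. This is false. By Serre's criterion, a qcqs scheme with vanishing higher quasi-coherent cohomology is affine. Concretely, $H^1(\mathbb{A}^2\setminus\{0\},\OO)\neq 0$ and $H^2(\mathbb{A}^3\setminus\{0\},\OO)\neq 0$.

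So the descent spectral sequence need not degenerate, and the edge map $\pi_0\Gamma(X,\OO_X)\to\Gamma(t_0X,\OO_{t_0X})$ need not be an isomorphism; in particular it need not be surjective. The obstruction to lifting a function through the Postnikov stage $\tau_{\le n}X\to\tau_{\le n-1}X$ lies in $H^{n+1}(t_0X,\pi_n\OO_X)$.

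Here is an explicit failure. Let $X_0=\mathbb{A}^3\setminus\{0\}$, and let $X$ be the derived square-zero extension of $X_0$ by $\OO_{X_0}[1]$ classified by $\xi=[x^{-1}y^{-1}z^{-1}]\,\partial_x\in H^2(X_0,T_{X_0})\simeq\pi_0\Map(\mathbb{L}_{X_0},\OO_{X_0}[2])$. Then $t_0X=X_0$. The obstruction to lifting $x\in\Gamma(X_0,\OO)$ is $\xi(dx)=[x^{-1}y^{-1}z^{-1}]\neq 0$. Yet $(X_0)_x$ is affine, so $x$ is exactly the kind of function you would take as one of your $\bar f_i$. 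Hence ``the $\bar f_i$ lift'' is unavailable, and your displayed isomorphism $\pi_0\Gamma(X,\OO_X)\cong\Gamma(t_0X,\pi_0\OO_X)$ is wrong in general.

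What is true, and suffices, is that some power of each $\bar f_i$ lifts; this does not change the locus. At each stage the obstruction map $\delta_n\colon\pi_0\Gamma(\tau_{\le n-1}X,\OO)\to H^{n+1}(t_0X,\pi_n\OO_X)$ is a derivation. Hence $\delta_n(g^N)=N\,\bar g^{\,N-1}\delta_n(g)$, where $\bar g$ is the image of $g$ in $\Gamma(t_0X,\OO)$. The target is $\bar f_i$-power torsion: for qcqs $t_0X$ one has $H^p(t_0X,\mathcal F)[\bar f_i^{-1}]\simeq H^p((t_0X)_{\bar f_i},\mathcal F)$, which vanishes for $p>0$ because the locus is affine.

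Since $t_0X$ is quasi-compact and separated, it has finite cohomological dimension. So only finitely many stages are obstructed, and $\pi_0\Gamma(X,\OO_X)\to\pi_0\Gamma(\tau_{\le n}X,\OO)$ is an isomorphism for $n\gg 0$. Raising to powers finitely many times therefore gives $f_i\in\pi_0\Gamma(X,\OO_X)$ lifting some $\bar f_i^{N}$.

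With this repair the rest of your argument goes through, with one correction: use $\Spec\tau_{\ge 0}\Gamma(X,\OO_X)$, as in \cref{lemma:derived_quasi_affineness}. The same higher cohomology makes $\Gamma(X,\OO_X)$ non-connective. Since $X_{f_i}$ is affine, $\Gamma(X,\OO_X)[f_i^{-1}]$ is connective and agrees with $(\tau_{\ge0}\Gamma(X,\OO_X))[f_i^{-1}]$.

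For comparison, the paper gives no argument of its own. It simply invokes \cite[Proposition 2.4.2.3]{Lur18}, the option you mention in your last sentence.
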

	
\begin{definition}
	Let $Y$ be an algebraic stack and $y_1, y_2 \in Y$.
	We say that a line bundle $\mathcal{L} \in \mathrm{Pic}(Y)$ separates $y_1$ from $y_2$ if there exists $s \in \mathrm{H}^0\mathrm{R}\Gamma(Y, \mathcal{L})$ such that $s_{y_1} \neq 0$ and $s_{y_2} = 0$.
	We say that $\mathcal{L}$ separates points if it separates $y_1$ from $y_2$ for each $y_1, y_2 \in Y$. 
\end{definition}

	The following results are well know to the experts, but we include a proof for completeness

\begin{lemma}\label{lemma:separate_points_is_quasi_finite}
	Let $Y$ be a separated algebraic space locally of finite type over a scheme $S$.
	Assume that the structural sheaf $\OO_Y$ separates points.
	Then
\[
f \colon Y \to \Spec(\mathrm{H}^0\mathrm{R}\Gamma(Y, \OO_Y))
\]
is locally quasi-finite.
\end{lemma}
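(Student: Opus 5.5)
The plan is to reduce local quasi-finiteness to a statement about fibers. Write $A \coloneqq \mathrm{H}^0\mathrm{R}\Gamma(Y, \OO_Y)$ and let $f\colon Y \to \Spec(A)$ be the canonical morphism. The first step is to check that $f$ is locally of finite type. Since $Y$ is locally of finite type over $S$, this would be immediate if $\Spec(A)$ were of finite type over $S$, which we do not know. Instead I will use the cancellation property for morphisms locally of finite type: if $Y \to S$ is locally of finite type and $\Spec(A) \to S$ is arbitrary, then the induced morphism $Y \to \Spec(A)$ is locally of finite type (Stacks Project, Tag 01T8 and its analogue for algebraic spaces). Here I am implicitly working over $S$, so that $A$ is an algebra over $\Gamma(S,\OO_S)$; if $S$ is not affine, one first restricts to an affine open of $S$ and its preimage in $Y$, or uses that $Y \to \Spec(A) \times_{\Spec \mathbb{Z}} S$ makes sense.

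Given that $f$ is locally of finite type, being locally quasi-finite is equivalent to having discrete fibers; for algebraic spaces one can take this equivalently to mean that every point is isolated in its fiber, or that the fibers over fields are discrete. It therefore suffices to show that for every point $t \in \Spec(A)$, the fiber $Y_t$ has no nontrivial specializations, i.e. that every point of $Y_t$ is closed in $Y_t$. A locally Noetherian space of finite type over a field in which every point is closed has dimension zero and is hence discrete.

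The key step is to exclude specializations inside fibers. Let $y_1 \rightsquigarrow y_2$ be a specialization with $y_1 \neq y_2$ and $f(y_1) = f(y_2) = t$. By hypothesis $\OO_Y$ separates $y_2$ from $y_1$, so there is $s \in A$ with $s(y_2) \neq 0$ and $s(y_1) = 0$. I expect to need separation in this direction and not the reverse one: nonvanishing is an open condition, so if $s$ vanished at $y_2$ but not at $y_1$ there would be no contradiction. The vanishing of $s$ at a point $y$ depends only on $f(y)$, because $s(y)$ is the image of $s$ in $\kappa(f(y)) \to \kappa(y)$ and this field map is injective. Hence $s(y_1)=0$ and $s(y_2)\neq 0$ contradict $f(y_1) = f(y_2)$.

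The same argument in fact shows more: since the separation hypothesis applies to any pair of distinct points, $f$ is injective on points. Each fiber $Y_t$ is therefore a single point, so $f$ is locally quasi-finite once it is known to be locally of finite type. The separatedness hypothesis is not needed for this argument; presumably it is used later, for Zariski's main theorem. The step I expect to be the main obstacle is the finite-type bookkeeping over a possibly non-affine $S$, together with the characterization of locally quasi-finite morphisms of algebraic spaces by discrete (here singleton) fibers. For that characterization I would cite Stacks Project Tag 06RW, or Tag 01TI in the scheme case, applied étale-locally.
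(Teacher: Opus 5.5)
Your argument is correct and is essentially the paper's: a section $s$ separating $y_1$ from $y_2$ puts $f(y_1)$ in $D(s)$ and $f(y_2)$ outside it (the paper phrases this as $f^{-1}(D(s))$ containing $y_1$ but not $y_2$), so the fibres of $f$ are singletons and Tag 06RW gives local quasi-finiteness. Your explicit check that $f$ is locally of finite type fills in a step the paper leaves implicit here; it works over an affine base such as $\Spec k$, which is the case the paper uses, and the paper only addresses it in the proof of the next lemma. Your aside about needing separation in a particular direction is unnecessary: vanishing of $s$ at $y$ depends only on $f(y)$, so either direction already contradicts $f(y_1)=f(y_2)$.
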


\begin{proof}
	Given $y_1, y_2 \in Y$ and a global section $s \in \mathrm{H}^0\mathrm{R}\Gamma(Y, \OO_Y)$ be a section separating $y_1$ from $y_2$, we have that the pullback of $X$ and $\Spec(\mathrm{H}^0\mathrm{R}\Gamma(Y, \OO_Y)_s)$ along $\Spec(\mathrm{H}^0\mathrm{R}\Gamma(Y, \OO_Y))$ contains $y_1$ but not $y_2$.
	Hence the fibers $f$ are finite and the result follows by \cite[\href{https://stacks.math.columbia.edu/tag/06RW}{Tag 06RW}]{stacks-project}.
\end{proof}

\begin{lemma}\label{lemma:derived_quasi_affineness}
	Let $Y$ be a derived quasi-compact separated algebraic space locally of finite type over $k$.
	The following are equivalent:
\begin{enumerate}\itemsep=0.2cm
	\item $Y$ is a quasi-affine;
	\item the canonical morphism $f \colon Y \to \Spec(\tau_{\geq 0}\mathrm{R}\Gamma(Y, \OO_Y))$ is representable by a quasi-compact open immersion;
	\item $Y$ is quasi-compact and the structural sheaf $\OO_Y$ separate points of $Y$.
\end{enumerate}
\end{lemma}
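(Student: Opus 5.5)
We prove the equivalences in the order (1) $\Rightarrow$ (3) $\Rightarrow$ (2) $\Rightarrow$ (1). Throughout we reduce to the classical truncation whenever possible, using \cref{thm:quasi_affine_truncation}. This reduction works because $t_0 Y$ and $Y$ have the same underlying topological space and the same points, and because $\mathrm{H}^0\mathrm{R}\Gamma(Y,\OO_Y)$ surjects onto $\mathrm{H}^0\mathrm{R}\Gamma(t_0Y,\OO_{t_0Y})$. The surjectivity holds since $\tau_{\geq 0}\mathrm{R}\Gamma$ commutes with the truncation $\OO_Y \to \pi_0\OO_Y$ on $\mathrm{H}^0$ for connective sheaves.

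For (1) $\Rightarrow$ (3), quasi-compactness holds by definition. Choose an open immersion $j\colon Y \hookrightarrow \Spec A$. Given distinct points $y_1,y_2$, their images in $\Spec A$ are distinct. Since $\Spec A$ is $T_0$ and its affine opens $D(a)$ form a basis, after possibly exchanging $y_1$ and $y_2$ we find $a\in \pi_0A$ with $a(y_1)\neq 0$ and $a(y_2)=0$. The pullback $j^\ast a$ is then the required section. If the definition demands separation in both directions, we instead use that distinct points of a separated scheme of finite type can be distinguished by open neighbourhoods containing one point but not the other; this works at least for closed points.

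For (3) $\Rightarrow$ (2), set $A=\tau_{\geq 0}\mathrm{R}\Gamma(Y,\OO_Y)$. By \cref{lemma:separate_points_is_quasi_finite}, applied to $t_0Y$, the morphism $f$ is locally quasi-finite, and in fact injective on points, since a section separating $y_1$ from $y_2$ shows $f(y_1)\neq f(y_2)$. It is also separated, because $Y$ is separated, and quasi-compact, because $Y$ is quasi-compact and the target is affine. By Zariski's Main Theorem for algebraic spaces, $f$ then factors as an open immersion followed by a finite morphism $Y\hookrightarrow \bar Y \to \Spec A$, and in particular $Y$ is a scheme. It remains to show that $f$ itself is an open immersion. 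For this I would cover $Y$ by finitely many opens of the form $Y_s=f^{-1}(D(s))$ with $s\in \pi_0 A$ and $Y_s$ affine. Such $s$ exist because the sections separate points: combined with quasi-compactness, this lets one find, for each point $y$, an $s$ with $Y_s$ a neighbourhood of $y$ contained in an affine open. Since $Y$ is qcqs, $\Gamma(Y_s,\OO)\simeq A[s^{-1}]$, so $Y_s\simeq \Spec A[s^{-1}] = D(s)$. Hence $f$ is an open immersion onto $\bigcup D(s)$, and this union is quasi-compact. The derived statement then follows from the classical one via \cref{thm:quasi_affine_truncation}, or directly from the same localization formula applied to the connective cover.

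The implication (2) $\Rightarrow$ (1) is immediate, since a quasi-compact open immersion into an affine scheme exhibits $Y$ as a quasi-compact open of an affine.

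I expect the main obstacle to be the step showing that the $Y_s$ with $s$ a global section form an affine cover. Separating points alone gives only the quasi-finiteness and injectivity of $f$. To promote this to an open immersion, the approach I would try first is to use Zariski's Main Theorem to embed $Y$ as an open in a scheme $\bar Y$ finite over $\Spec A$, hence affine. One then argues that $\mathcal{O}(\bar Y)\to A$ is compatible with these embeddings, so that $Y$ is quasi-affine as an open of the affine $\bar Y$, which yields (1) directly.
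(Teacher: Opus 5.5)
\textbf{There is a genuine gap}, at exactly the step you flag as the main obstacle. You claim that, because global sections separate points, one can find for each $y$ an $s\in\pi_0A$ with $Y_s$ an affine neighbourhood of $y$. Nothing supports this. Separation only produces, for each pair $y\neq y'$, some $s$ with $s(y)\neq 0=s(y')$, and nothing forces $Y_s$ to be affine or to lie in an affine open. Having a cover of $Y$ by affine $Y_s$ with $s$ global is precisely ampleness of $\OO_Y$. For quasi-compact $Y$ this is equivalent to the quasi-affineness you are trying to prove, so it cannot come from separation and quasi-compactness alone. What separation together with \cref{lemma:separate_points_is_quasi_finite} actually gives is that $f$ is quasi-finite and injective on points. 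Such morphisms need not be open immersions: the normalization $\mathbb{A}^1\to\Spec k[t^2,t^3]$ is finite and bijective. So, as written, (3)$\Rightarrow$(2) is not established, and your localization formula $\Gamma(Y_s,\OO)\simeq A[s^{-1}]$ has nothing to act on.

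The missing observation is one line from what you already wrote. In the Zariski Main Theorem factorization $Y\hookrightarrow\bar Y\to\Spec A$, the map $\bar Y\to\Spec A$ is finite, hence affine. So $\bar Y$ is affine and $Y$ is a quasi-compact open in it, i.e.\ (1) holds. No compatibility between $\OO(\bar Y)$ and $A$ is needed, since quasi-affineness is intrinsic. This is how the paper's appeal to Zariski's Main Theorem should be read; it then concludes using (1)$\Leftrightarrow$(2) from Lurie's Proposition 2.4.1.3.

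Your cycle then needs (1)$\Rightarrow$(2), which you never treat on its own. Either cite Lurie as the paper does, or rerun your localization argument with a valid source of sections. For $y\in Y$, choose $g\in\Gamma(\bar Y,\OO_{\bar Y})$ with $y\in D_{\bar Y}(g)\subset Y$ and put $s=g|_Y$. Then $Y_s=D_{\bar Y}(g)$ is affine and $Y_s\simeq D(s)$.

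Separately, the surjectivity of $\pi_0\Gamma(Y,\OO_Y)\to\Gamma(t_0Y,\OO_{t_0Y})$ that you invoke at the start is not automatic: its cokernel injects into $\pi_{-1}\Gamma(Y,\tau_{\geq 1}\OO_Y)$, which need not vanish. Nothing requires it, though. In (1)$\Rightarrow$(3) the separating sections come from $\pi_0$ of the ambient derived affine. In (3)$\Rightarrow$(2), separation by elements of $\pi_0A$ is exactly what makes $t_0f$ injective and quasi-finite.
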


\begin{proof}
	The equivalence between $(1)$ and $(2)$ is \cite[Proposition 2.4.1.3]{Lur18} and the fact that $Y$ is connective (see also \cite[Lemma 1.5]{AHPS23}). 
	Clearly $(2)$ implies $(3)$.
	We now show that $(3)$ implies $(2)$.\\ \indent
	Since $Y \to \Spec(k)$ is of locally of finite type, then $f$ is locally of finite type.
	Since being locally quasi-finite is a property defined on the classical truncation of $Y$ (\cite[Definition 3.3.1.1]{Lur18}), if $(3)$ holds then $f$ is locally quasi-finite by \cref{lemma:separate_points_is_quasi_finite}. 
	Moreover since $X$ is quasi-compact and separated and $\Spec(\tau_{\geq 0}\mathrm{R}\Gamma(Y, \OO_Y))$ is affine, we have that $f$ is quasi-compact and separated.
	The result then follows by Zariski's Main Theorem \cite[Theorem 3.3.0.2]{Lur18}.
\end{proof}

	We will use the following criterion for quasi-affineness of a good moduli space.

\begin{corollary}\label{corollary:quasi_affineness_good}
	Let $q \colon \X \to X$ be a good moduli space over $k$ with $X$ separated and of finite type over $k$.
	Then $X$ is quasi-affine if and only if $\OO_\X$ separates polystable points.
\end{corollary}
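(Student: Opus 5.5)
The plan is to reduce to \cref{lemma:derived_quasi_affineness}, applied to $Y = X$: since $X$ is of finite type over $k$ it is quasi-compact, and it is separated and locally of finite type by hypothesis. That lemma says $X$ is quasi-affine if and only if $\OO_X$ separates points of $X$. So the statement comes down to transporting ``separating points'' back and forth along $q$.

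The first step identifies global sections. Since $\OO_X \simeq q_\ast \OO_\X$, we get
\[
\mathrm{H}^0\mathrm{R}\Gamma(X,\OO_X) \simeq \mathrm{H}^0\mathrm{R}\Gamma(\X,\OO_\X),
\]
where the right-hand side is computed via $q_\ast$. Under this isomorphism a section $s$ on $X$ corresponds to $q^\ast s$ on $\X$. For $x \in \X$ with image $q(x)$, the value $(q^\ast s)_x$ vanishes if and only if $s_{q(x)}$ vanishes, because the residue field of $x$ extends that of $q(x)$.

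The second step uses the topology of $q$. By \cref{good_moduli_properties} (2) the map $q$ is surjective. By (3), applied to closures of points, each fibre $q^{-1}(y)$ contains a unique closed point, which is polystable by definition. Hence polystable points of $\X$ up to equivalence are in bijection with the points of $X$, through $x \mapsto q(x)$.

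Combining the two steps gives both directions. Suppose $\OO_\X$ separates polystable points, and take $y_1 \neq y_2$ in $X$. Choose polystable lifts $x_1, x_2$; then a section $s$ with $s_{x_1} \neq 0$ and $s_{x_2} = 0$ descends to $X$ and separates $y_1$ from $y_2$. Conversely, if $X$ is quasi-affine, \cref{lemma:derived_quasi_affineness} shows that $\OO_X$ separates points. Distinct polystable points have distinct images, so pulling back the separating sections along $q$ separates them.

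The main subtlety is the bookkeeping at the level of points. The bijection between polystable points and points of $X$ must be justified from \cref{good_moduli_properties} (3) together with closedness of $q$, rather than from \cref{good_moduli_properties} (4), which is stated only for $\kappa$-points with $\kappa$ algebraically closed. One must also make sure that ``separating'' is tested on points and not on geometric points. Vanishing of a function at a point is insensitive to field extension, so this second issue is harmless.
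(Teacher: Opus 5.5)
Your proof is correct and is the same as the paper's: identify $\mathrm{H}^0\mathrm{R}\Gamma(X,\OO_X)$ with $\mathrm{H}^0\mathrm{R}\Gamma(\X,\OO_\X)$ via $q_\ast\OO_\X\simeq\OO_X$, match points of $X$ with polystable points of $\X$, and conclude by \cref{lemma:derived_quasi_affineness}. The only difference is that you derive the bijection from \cref{good_moduli_properties}-$(2)$ and $(3)$ rather than quoting item $(4)$, which handles the same step more carefully; note only that the existence of a point closed in each nonempty fibre comes from the fibre being quasi-compact and quasi-separated, not from $(2)$ and $(3)$ themselves.
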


\begin{proof}
	The points of $X$ are in bijection with the polystable points of $\X$ by \cref{good_moduli_properties}-$(4)$.
	Since $q_\ast \OO_\X \simeq \OO_X$ by definition of $q$, it follows that $\OO_X$ separates points if and only if $\OO_\X$ separates polystable points.
	The claim then follows by \cref{lemma:derived_quasi_affineness}.
\end{proof}
	
\section{Moduli of flat objects}

	In this Section we recall the main definition of moduli of (flat) objects and the main results needed in the rest of the paper.

\subsection{Families of objects}\label{section_moduli_of_objects}

	In this paragraph we recall properties of the tensor product of presentable $\infty$-categories, which is used to define families of objects. 
	We also discuss some finiteness properties one can impose on compactly generated presentable categories.

\begin{recollection}[{\cite[Section 4.8]{Lur17}}]\label{tensor_product}
	For $\C, \D \in \mathrm{Pr}^\mathrm{L}_k$, we can consider their tensor product
\[
\C \otimes_k \D \simeq \FunR_k(\C^{op}, \D)
\]
where the right hand side is the $\infty$-category of $k$-linear functors commuting with limits.
	This tensor product endows $\Pr^{\mathrm{L}}_k$ with a symmetric monoidal structure which restricts to $\Prlo_k$. 
	When $\C$ is compactly generated, there is a canonical equivalence
\[
\C \otimes_k \D \simeq \Funst_k((\C^{\omega})^{op}, \D)
\]
where the right hand side is the $\infty$-category of exact $k$-linear functors.
\end{recollection}

\begin{remark}\label{functoriality_tens_prod}
	Let $\C, \D, \E \in \Pr_k^{\mathrm{L}}$.
	Then an adjunction
\[
f \colon \D \leftrightarrows \E \colon g
\]
induces an adjunction 
\[
\Id_\C \otimes_k f \colon \C \otimes_k \D\leftrightarrows \C \otimes_k \E \colon \Id_\C \otimes_k g.
\]
	The functor $\Id_\C \otimes_k g$ corresponds to 
\[
g \circ - \colon \FunR_k(\C^{op}, \E) \to C \otimes_k \E \simeq \FunR_k(\C^{op}, \D)
\]
under the equivalences of \cref{tensor_product}.
	Its left adjoint $\Id_\C \otimes_k f$ has no easy description in general.
	Notice indeed that post-composition with $f$ do not preserve functors that commutes with limits.
	Nevertheless, if $\C$ is compactly generated one can see that the adjunction $\Id_\C \otimes_k f  \dashv \Id_\C \otimes_k g$ corresponds to 
\[
f \circ - \colon \Funst_k((\C^{\omega})^{op}, \D) \leftrightarrows \Funst_k((\C^{\omega})^{op}, \E) \colon g \circ -
\]
under the equivalences of \cref{tensor_product}.
\end{remark}

\begin{notation}
In the setting of \cref{functoriality_tens_prod}, we denote the functors $\Id_\C \otimes_k f, \Id_\C \otimes_k g$ respectively by $f_\C, g_\C$.
\end{notation}

	In the stable setting, the following result is a reformulation of \cite[Lemma 2.2.1]{SS03}.

\begin{lemma}[{\cite[Lemma 2.4]{AG14}}]\label{generators_stable}
	Let $\E \in \Prlo_k$ stable and $X \subset \E^\omega$ a set of compact objects.
	Then the following are equivalent:
\begin{enumerate}\itemsep=0.2cm
	\item $\forall y \in \E$,
\[
\mathrm{Map}_\E(x,y) \simeq * \in \mathrm{Sp} \ \  \forall x \in X \Rightarrow y \simeq 0;
\]
	\item the $\infty$-category $\E^\omega$ is the smallest full subcategory of $\E$ containing $X$ and stable under finite colimits, shifts and retracts.
\end{enumerate}
\end{lemma}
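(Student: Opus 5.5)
We prove the two implications separately. In both directions the object to study is the thick closure $\langle X \rangle$: the smallest full stable subcategory of $\E$ containing $X$ and closed under finite colimits, shifts and retracts. It is contained in $\E^\omega$, because compact objects in a stable category are closed under these operations. Throughout we use that $\E$ is compactly generated, since $\E \in \Prlo_k$.

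For $(1)\Rightarrow(2)$ we must show $\E^\omega \subseteq \langle X \rangle$. First let $\E' \subseteq \E$ be the localizing subcategory generated by $X$, i.e. $\mathrm{Ind}(\langle X\rangle)$, which embeds fully faithfully because the objects of $X$ are compact. The inclusion $\E' \hookrightarrow \E$ preserves colimits, so it has a right adjoint $R$. For any $y \in \E$ consider the counit cofiber sequence $\iota R y \to y \to z$. For $x \in X$, applying $\mathrm{Map}_\E(x,-)$ gives $\mathrm{Map}(x,\iota R y) \simeq \mathrm{Map}(x,y)$ by adjunction and full faithfulness, hence $\mathrm{Map}(x,z) \simeq *$. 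Hypothesis $(1)$ then forces $z \simeq 0$, so $\E' = \E$.

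Next, compact objects of $\mathrm{Ind}(\langle X\rangle)$ are exactly the retracts of objects of $\langle X\rangle$; this uses the standard fact \cite{Lur17} that $\mathrm{Ind}(\mathcal{A})^\omega$ is the idempotent completion of $\mathcal{A}$. Since $\langle X\rangle$ is already closed under retracts, we get $\E^\omega = \langle X\rangle$. This identification of compacts is the main technical step. To carry it out, I would write a compact $c$ as a filtered colimit of objects of $\langle X\rangle$ and factor the identity of $c$ through one stage of the colimit, which exhibits $c$ as a retract of an object of $\langle X\rangle$.

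For $(2)\Rightarrow(1)$, fix $y$ with $\mathrm{Map}(x,y)\simeq *$ for all $x\in X$. The full subcategory of objects $c \in \E$ satisfying $\mathrm{Map}(c,y)\simeq *$ is closed under finite colimits, shifts and retracts, because $\mathrm{Map}(-,y)$ sends cofiber sequences to fiber sequences and shifts to loops or suspensions. It also contains $X$, so by $(2)$ it contains all of $\E^\omega$. Since $\E$ is compactly generated, an object receiving only null mapping spectra from every compact object is zero. Hence $y\simeq 0$. Note that we must use mapping spectra here, not just $\pi_0$, so that the condition is stable under shifts.
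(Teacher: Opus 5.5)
Your proof is correct, and it follows essentially the same route as the paper. The paper gives no argument of its own: it cites \cite[Lemma 2.4]{AG14} and says the statement is a reformulation of \cite[Lemma 2.2.1]{SS03}. What you wrote is the standard $\infty$-categorical version of that argument: first show that $X$ generates $\E$ under colimits, using the right adjoint of $\mathrm{Ind}(\langle X\rangle)\hookrightarrow \E$ where the triangulated-category proof uses cellular approximation; then identify $\E^\omega$ with the thick closure $\langle X\rangle$, using that compact objects of an $\mathrm{Ind}$-category are retracts of objects of the original category, in place of Neeman's theorem.
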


\begin{definition}\label{compact_gen_def}
	Let $\E \in \Prlo_k$ stable and $X \subset \E^\omega$ a set of compact objects.
	We say that $X$ is a set of compact generators if it satisfies the equivalent conditions of \cref{generators_stable}.
\end{definition}

\begin{example}
	Let $A \in \mathrm{Alg}_k$.
	Then $A$ is a compact generator of $\Mod_A$ in the sense of \cref{compact_gen_def}.
\end{example}

\begin{definition}
	For $\C \in \mathrm{Pr}^\mathrm{L}_k$ and $X\in \dSt_k$, define the $\infty$-category of $X$-families of objects in $\C$ as
\[
\C_X \coloneqq  \C \otimes_k \QCoh(X) \ .
\]
\end{definition}

\begin{notation}
	Let $\C \in \Pr^{\mathrm{L}}_k$ and $S = \Spec (A) \in \dAff_k$.
	The $\infty$-category of $S$-family of objects in $\C$ will be denoted equivalently by $\C_S$ or by $\C_A$.
\end{notation}

\begin{definition}\label{pseudo_perfect}
	For $\C \in \Prlo_k$ and $X \in \dSt_k$, define the $\infty$-category of pseudo-perfect families over $X$ in $\C$ as the full subcategory
\[
\Funst((C^\omega)^{op}, \Perf(X)) \subset \C_X
\]
spanned by functors valued in $\Perf(X)$.
\end{definition}

	In order to introduce finiteness conditions on presentable categories, let us give the following:

\begin{lemma}[{\cite[Theorem D.7.0.7]{Lur18}}]\label{finiteness_conditions}
	Let $\C \in \Prlo_k$.
	Then $\C$ is dualizable in $\Pr_k^{\mathrm{L}}$ with dual given by $\C^\vee \simeq \mathrm{Ind}((\C^\omega)^{op})$.
	In particular, up to a contractible space of choices, there are unique maps in $\Pr^{\mathrm{L}}_k$
\[
\mathrm{ev}_\C \colon  \C^\vee \otimes_k \C \to \Mod_k
\]
\[
\mathrm{coev}_\C  \colon  \Mod_k \to \C \otimes_k \C^\vee
\]
satisfying the triangular identities.
\end{lemma}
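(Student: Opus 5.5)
The statement is Lurie's Theorem D.7.0.7, so the plan is to reconstruct its standard proof: write down explicit duality data and verify the triangle identities. Set $\C^\vee \coloneqq \mathrm{Ind}((\C^\omega)^{op})$. By \cref{tensor_product}, applied with the compactly generated factor first, we have
\[
\C^\vee \otimes_k \D \simeq \Funst_k(\C^\omega, \D),
\qquad
\C \otimes_k \D \simeq \Funst_k((\C^\omega)^{op}, \D)
\]
for any $\D \in \Pr^{\mathrm{L}}_k$.

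The evaluation and coevaluation are then defined as follows.
\begin{itemize}
\item For the evaluation, note that $\C^\vee \otimes_k \C \simeq \Funst_k(\C^\omega, \C)$. Let $\mathrm{ev}_\C$ send a functor $F$ to the colimit over $\C^\omega$ of $\mathrm{Map}$-type pairings. Equivalently, it is the colimit-preserving functor determined on compact generators $y \boxtimes x$, with $y \in (\C^\omega)^{op}$ and $x \in \C^\omega$, by
\[
y \boxtimes x \mapsto \mathrm{Map}_\C(y,x) \in \Mod_k,
\]
using the $k$-linear enrichment. Since $\C^\vee \otimes_k \C \simeq \mathrm{Ind}((\C^\omega)^{op} \otimes \C^\omega)$, this is the left Kan extension of the mapping-object functor $(\C^\omega)^{op} \times \C^\omega \to \Mod_k$, which is exact in each variable.
\item For the coevaluation, let $\mathrm{coev}_\C$ be determined by $k \mapsto$ the object of
\[
\C \otimes_k \C^\vee \simeq \Funst_k((\C^\omega)^{op}, \C^\vee)
\]
given by the Yoneda functor $x \mapsto j(x)$, where $j$ is the inclusion $(\C^\omega)^{op} \hookrightarrow \mathrm{Ind}((\C^\omega)^{op})$. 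Equivalently, under $\C \otimes_k \C^\vee \simeq \mathrm{Fun}^{\mathrm{st}}_k(\C^{\omega}\otimes (\C^\omega)^{op}\to ...)$, it is the diagonal bimodule.
\end{itemize}

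Next, check the triangle identity
\[
(\mathrm{id}_\C \otimes \mathrm{ev}_\C) \circ (\mathrm{coev}_\C \otimes \mathrm{id}_\C) \simeq \mathrm{id}_\C .
\]
Both sides are colimit-preserving functors $\C \to \C$. Since $\C \simeq \mathrm{Ind}(\C^\omega)$, it suffices to compare them on compact objects $x$. Unwinding the definitions, the composite sends $x$ to
\[
\operatorname{colim}_{y \in \C^\omega}\ \mathrm{Map}(y,x) \otimes y,
\]
that is, the coend $\int^{y} \mathrm{Map}(y,x) \otimes y$. By the co-Yoneda lemma this coend is $x$. The other triangle identity follows in the same way, using co-Yoneda on $(\C^\omega)^{op}$.

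Uniqueness up to a contractible space of choices is formal: in any symmetric monoidal $\infty$-category, the space of duality data for a dualizable object is contractible once the dual is fixed up to equivalence (Lurie, Higher Algebra 4.6.1).

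The main obstacle is making the coend and co-Yoneda computation honest at the $\infty$-categorical level, namely identifying the composite functor with the tensor of the Yoneda bimodule. I would handle this by reducing to the standard fact that $\mathrm{Fun}^{\mathrm{L}}(\mathrm{Ind}\,A, \mathrm{Ind}\,B) \simeq \mathrm{Ind}(A^{op} \otimes B)$, under which composition corresponds to the tensor product of bimodules and the identity corresponds to the Yoneda bimodule.
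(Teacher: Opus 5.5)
Your argument is correct, but the paper gives no proof of its own: it only cites Lurie. Your reconstruction is therefore an independent route, and it is the standard explicit one. You take $\ev_\C$ to be the $\mathrm{Ind}$-extension of the $k$-linear mapping-object pairing on $(\C^\omega)^{op}\otimes\C^\omega$, take $\coev_\C$ to be the Yoneda (diagonal) object, and check the triangle identities by co-Yoneda. One input you use silently: applying \cref{tensor_product} to $\C^\vee$ needs $(\C^\vee)^\omega\simeq(\C^\omega)^{op}$. This holds because $\C^\omega$ is idempotent complete.

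What your route buys is concrete duality data. That suits the later uses of $\HH$ as $\ev_\C\circ\coev_\C$ and the trace computations behind $\HO$. What it costs is the coherence issue you flag yourself. ``Comparing on compact objects'' requires a $k$-linear natural equivalence between the restrictions to $\C^\omega$, not objectwise identifications, and making the coend manipulation honest is real work.

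You can bypass this entirely. In $\Pr^{\mathrm{L}}_k$, whose internal Hom is $\mathrm{Fun}^{\mathrm{L}}_k$, the object $\C$ is dualizable with dual $\C^\vee$ as soon as the map $\C^\vee\otimes_k\D\to\mathrm{Fun}^{\mathrm{L}}_k(\C,\D)$ induced by $\ev_\C$ is an equivalence for $\D=\Mod_k$ and for $\D=\C$.
\begin{itemize}
\item The case $\D=\Mod_k$ identifies $\C^\vee$ with the internal dual and $\ev_\C$ with its counit.
\item The case $\D=\C$ lets you define $\coev_\C$ as the preimage of $\mathrm{id}_\C$. One triangle identity then holds by construction. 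The other follows because maps into the internal dual are detected after composing with the counit.
\end{itemize}
Both sides are identified with $\Funst_k(\C^\omega,\D)$, by \cref{tensor_product} and by the universal property of $\mathrm{Ind}$. The only remaining check is that the map induced by $\ev_\C$ becomes the identity under these identifications. That is just the first half of the ``standard fact'' you planned to invoke. The compatibility of composition with the tensor product of bimodules, and the co-Yoneda computation, are then unnecessary.

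Your justification of uniqueness needs a more careful formulation.
\begin{itemize}
\item What is contractible is the space of pairs $(\C^\vee,\ev_\C)$ with $\ev_\C$ exhibiting a duality, where $\C^\vee$ is allowed to vary.
\item Given $\ev_\C$, the space of choices of $\coev_\C$ together with a homotopy for \emph{one} triangle identity is also contractible.
\item Fixing $\C^\vee$ on the nose instead leaves a torsor under $\mathrm{Aut}(\C^\vee)$.
\item Asking for independent homotopies for both triangle identities does not give a contractible space.
\end{itemize}
The lemma's ``unique up to a contractible space of choices'' has to be read in the first sense. With that reading, your appeal to HA 4.6.1 covers it.
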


	We will need the following finiteness condition:

\begin{definition}\label{def_finiteness_conditions}
	Let $\C \in \Prlo_k$.
	We say that $\C$ is 
\begin{enumerate}\itemsep=0.2cm
    \item smooth if $\ev_\C$ preserves compact objects;
    \item proper if $\coev_\C$ preserve compact objects;
    \item of finite type if it is a compact object of $\Prlo_{k}$.
\end{enumerate}
\end{definition}

	The next result compares the different finiteness conditions of \cref{def_finiteness_conditions}.

\begin{proposition}[{\cite[Proposition II.2.6]{DPS25}}]\label{relation_finiteness_conditions}
	Let $\C \in \Prlo_k$.
	Then:
\begin{enumerate}\itemsep=0.2cm
    \item if $\C$ is of finite type, it is smooth;
    \item if $\C$ is smooth and proper, it is of finite type.
\end{enumerate}
Moreover, if $\C$ is smooth then it admits a compact generator in the sense of \cref{compact_gen_def}.
\end{proposition}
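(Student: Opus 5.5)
This statement is quoted from \cite[Proposition II.2.6]{DPS25}; I would reconstruct the argument from the characterization of compact objects in $\Prlo_k$ and the formalism of dualizability of \cref{finiteness_conditions}. Write $\C^{\mathrm{e}} \coloneqq \C \otimes_k \C^\vee$, which is compactly generated. Recall the standard fact that $\C^{\mathrm{e}}$ is equivalent to the $\infty$-category of colimit-preserving endofunctors of $\C$. Under this equivalence $\coev_\C(k)$ is the identity functor $\Id_\C$, viewed as the diagonal bimodule. Moreover $\ev_\C$ corresponds to the trace of the endofunctor, or to the Hom pairing $\mathrm{Map}(x,y)$ on compact objects.

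\textbf{Step 1: compact generator.} For smooth $\C$ I would argue directly. The object $\coev_\C(k) \simeq \Id_\C$ is a colimit of objects $x \boxtimes y^\vee$ with $x, y \in \C^\omega$. Smoothness is usually formulated as compactness of this diagonal bimodule, i.e.\ of the left adjoint side; I take this to be the meaning of \cref{def_finiteness_conditions}(1) up to duality, and I would check that the two formulations agree. Granting this, $\Id_\C$ is a retract of a finite colimit of finitely many such objects $x_i \boxtimes y_i^\vee$. Applying the resulting expression to any $z \in \C$ exhibits $z$ as a retract of a finite colimit of objects $x_i \otimes \mathrm{Map}(y_i, z)$. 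In particular, if $\mathrm{Map}(x_i, -)$ vanishes on $z$ for all $i$, then $z \simeq 0$. Hence $\bigoplus_i x_i$ is a compact generator in the sense of \cref{compact_gen_def}.

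\textbf{Step 2: finite type implies smooth.} Use the Toën--Vaquié description: a compactly generated $\C$ with compact generator is equivalent to $\Mod_A$ for some $A$. Compact objects of $\Prlo_k$ are retracts of finite cell constructions from $\Mod_k$. Concretely, $\Mod_A$ is compact if and only if $A$ is a homotopically finitely presented dg-algebra, that is, a retract of a finite cell $k$-algebra. Finite cell algebras have diagonal bimodule $A$ built by finitely many cells in $\Mod_{A\otimes A^{op}}$, and this is preserved under retracts. This gives smoothness. For a general compact $\C$ I would first produce a compact generator: write $\C$ as a filtered colimit of its subcategories generated by finite sets of compact objects, and use compactness to split the identity through one of them. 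The main obstacle is making precise the link between compactness in $\Prlo_k$ and finite cell presentations of $A$. I would try to quote Toën--Vaquié \cite{TV07} for this.

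\textbf{Step 3: smooth and proper implies finite type.} Properness means $\mathrm{Map}_\C(x,y)$ is perfect over $k$ for compact $x, y$, so $A$ is a perfect $k$-module. Smoothness means $A$ is perfect as an $A\otimes A^{op}$-module. By Toën--Vaquié, a smooth and proper dg-algebra is homotopically finitely presented: one exhibits $A$ as a retract of a finite cell algebra, using that the cotangent complex and the underlying complex are both perfect. Hence $\Mod_A$ is compact in $\Prlo_k$.
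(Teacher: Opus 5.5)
Your reconstruction follows the standard route, and the paper's bare citation ultimately rests on the same one. Step 1 is the usual argument: compactness of $\Id_\C=\coev_\C(k)$ in $\C\otimes_k\C^\vee\simeq\mathrm{Fun}^{\mathrm{L}}(\C,\C)$ places it in the thick subcategory generated by finitely many $x_i\boxtimes y_i^\vee$. Steps 2--3 reduce (1) and (2) to To\"en--Vaqui\'e's results on homotopically finitely presented dg-algebras. Your filtered-colimit trick for producing a compact generator of a finite type $\C$ is correct: the inclusions $\mathrm{Ind}(\langle S\rangle)\hookrightarrow\C$ are fully faithful, so a splitting of the identity forces one of them to be an equivalence.

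\textbf{The reconciliation step fails.} You say you would check that compactness of the diagonal agrees ``up to duality'' with \cref{def_finiteness_conditions}-(1) as written. It does not.
\begin{itemize}
\item On compact generators $\ev_\C(y^\vee\boxtimes x)\simeq\Map_\C(y,x)$. So ``$\ev_\C$ preserves compact objects'' says exactly that mapping complexes between compact objects are perfect, which is properness in the usual sense.
\item Replacing $\C$ by $\C^\vee$ changes nothing, since $\ev_{\C^\vee}$ and $\coev_{\C^\vee}$ are $\ev_\C$ and $\coev_\C$ composed with the symmetry.
\item Under the literal definitions, item (1) is false: $\Mod_{k[x]}$ is of finite type ($k[x]$ is the free $E_1$-algebra on one generator), yet $\Map(k[x],k[x])\simeq k[x]$ is not perfect over $k$.
\item The moreover part is also false literally: $\prod_{n\in\N}\Mod_k$ has perfect mapping complexes between compact objects but no single compact generator.
\end{itemize}
So the paper's definition has $\ev_\C$ and $\coev_\C$ interchanged; elsewhere, e.g.\ in \cref{lemma_smoothness} and \cref{pseudo_perfect_vs_compact}, it uses the standard convention. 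You should simply declare that smooth means $\coev_\C$ preserves compact objects and proper means $\ev_\C$ does. With that convention your argument goes through, and no ``check'' is needed or possible.

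\textbf{A small slip in Step 1.} Your formula involves $\Map(y_i,z)$, so what follows ``in particular'' is that $\bigoplus_i y_i$ is a compact generator. The claim for $\bigoplus_i x_i$ is also true, but needs one more line. Each $z$ is a retract of a finite colimit of objects $x_i\otimes M_i$. If $\Map(x_i,z)\simeq 0$ for all $i$, then $\Map(z,z)$ is a retract of a finite limit of the $\Map_k(M_i,\Map(x_i,z))\simeq 0$, hence $z\simeq 0$.
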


As a consequence of the moreover part, we get the following description of smooth categories.
\begin{lemma}\label{lemma_smoothness}
	Let $\C \in \Prlo_k$.
	The following are equivalent:
\begin{enumerate}\itemsep=0.2cm
	\item $\C$ is smooth;
	\item there exists an equivalence $\C \simeq \LMod_A$ with $A \in \mathrm{Alg}_k$ smooth in the sense of \cite[Definition 4.6.4.13]{Lur17}.
\end{enumerate}
\end{lemma}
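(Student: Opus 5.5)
For $(2)\Rightarrow(1)$, I would set $A \coloneqq \mathrm{End}_\C(G)^{op}$ and realize $\C \simeq \LMod_A$ via $G$, then deduce smoothness of $A$ from smoothness of $\C$. For $(1)\Rightarrow(2)$ the converse is automatic, since $\LMod_A$ has compact generator $A$ and Lurie's notion of smoothness matches the one in \cref{def_finiteness_conditions}. The steps are as follows.

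First, starting from a smooth $\C$, the moreover part of \cref{relation_finiteness_conditions} gives a single compact generator $G \in \C^\omega$ in the sense of \cref{compact_gen_def}. With $A$ as above, I would take the functor
\[
\mathrm{Map}_\C(G,-) \colon \C \to \LMod_A .
\]
It preserves colimits because $G$ is compact and $\C$ is stable. It is fully faithful on the thick subcategory generated by $G$. By \cref{generators_stable}, that thick subcategory is all of $\C^\omega$, and it is sent onto the thick subcategory generated by $A$, which is $\Perf(A) = \LMod_A^\omega$. Passing to Ind-completions gives $\C \simeq \LMod_A$. This is the standard Schwede--Shipley argument, reformulated in \cite[Lemma 2.4]{AG14}.

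Next comes the comparison of smoothness notions, which I expect to be the only step needing care. Under $\C \simeq \LMod_A$ we have $\C^\vee \simeq \mathrm{Ind}((\LMod_A^\omega)^{op}) \simeq \mathrm{RMod}_A$. The evaluation map $\ev_\C \colon \mathrm{RMod}_A \otimes_k \LMod_A \simeq \mathrm{BiMod}_{A,A}$-type category $\simeq \LMod_{A^{op}\otimes_k A} \to \Mod_k$ is the functor $M \mapsto A \otimes_{A^{op}\otimes_k A} M$. It is determined by the diagonal bimodule $A$, viewed as an object of $\C^\vee \otimes \C$ up to the appropriate identification.

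Dually, the coevaluation corresponds to the diagonal bimodule. By Lurie's definition, $A$ is smooth exactly when $A$ is perfect as an $A\otimes_k A^{op}$-module. A colimit-preserving functor $\LMod_{A^{op}\otimes A}\to\Mod_k$ preserves compact objects if and only if its representing right module is perfect over $k$. So I must check that the paper's convention, that $\ev_\C$ preserves compacts, corresponds precisely to perfectness of the diagonal bimodule. This is where I expect the main obstacle: keeping track of which of $\ev$ and $\coev$ encodes the diagonal, and of op-conventions.

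To settle this, I would use the triangle identities. These show that the diagonal bimodule $A$ corresponds to $\coev_\C(k)$, and also that $\ev_\C$ is the functor corepresented by $A$, with a compact right adjoint exactly when $A$ is compact. I would then invoke that this is the formulation used in \cite{DPS25}, where smoothness of $\C$ is shown equivalent to compactness of the identity bimodule. With that identification, both directions follow.
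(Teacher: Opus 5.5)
\textbf{There is a genuine gap, in the step comparing the two notions of smoothness.}

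Your Morita step is fine. It is the content behind \cite[Proposition 11.3.2.4]{Lur18}, which the paper simply cites. You have, however, swapped the labels: producing a smooth $A$ from a smooth $\C$ is $(1)\Rightarrow(2)$, not $(2)\Rightarrow(1)$.

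The trouble is in the comparison of smoothness notions. You rightly identify it as the crux, but then settle it with a false claim.

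By your own criterion, $\ev_\C \colon \mathrm{RMod}_A\otimes_k\LMod_A\to\Mod_k$ preserves compact objects if and only if it sends the compact generator $A\boxtimes A$ to a perfect $k$-module. But $\ev_\C(A\boxtimes A)\simeq A\otimes_A A\simeq A$. So ``$\ev_\C$ preserves compacts'' says exactly that $A\in\Perf(k)$, i.e.\ that $A$ is \emph{proper}. It says nothing about compactness of the diagonal bimodule.

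For example, $A=k[x]$ is smooth in the sense of \cite[Definition 4.6.4.13]{Lur17}: the diagonal has the Koszul resolution $k[x,y]\xrightarrow{x-y}k[x,y]$. Yet $\ev_\C$ does not preserve compacts for $\C=\LMod_{k[x]}$, because $k[x]\notin\Perf(k)$.

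Your triangle-identity step also misdescribes $\ev_\C$. It is $A\otimes_{A\otimes_k A^{op}}(-)$, not the functor corepresented by $A$. It is corepresentable (by the inverse dualizing bimodule $\Hom_{A\otimes_k A^{op}}(A,A\otimes_k A^{op})$) exactly when $A$ is smooth. Its right adjoint preserves filtered colimits exactly when $A$ is proper. So no bookkeeping of op-conventions can make the condition on $\ev_\C$ match Lurie's definition. The closing appeal to \cite{DPS25} only helps once you replace the $\ev_\C$-condition by the one below.

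\textbf{The fix is to put the condition on the other duality map.} Smoothness in the sense needed here is that $\coev_\C$ preserves compact objects. This is the notion for which \cref{relation_finiteness_conditions} holds; with the literal reading, ``finite type implies smooth'' already fails for $\LMod_{k[x]}$. It is also the notion underlying \cite[Proposition 11.3.2.4]{Lur18}. Read literally, the two conditions in \cref{def_finiteness_conditions} are interchanged.

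With this notion the comparison becomes immediate:
\begin{itemize}
\item Since $\coev_\C$ is exact and $\Mod_k^\omega$ is the thick closure of $k$, the condition is equivalent to $\coev_\C(k)$ being compact.
\item Under $\C\otimes_k\C^\vee\simeq\LMod_A\otimes_k\mathrm{RMod}_A\simeq{}_A\mathrm{BMod}_A$, the object $\coev_\C(k)$ is the diagonal bimodule $A$.
\item Compactness of $A$ in ${}_A\mathrm{BMod}_A$ is verbatim \cite[Definition 4.6.4.13]{Lur17}.
\end{itemize}
If you prefer to phrase it through $\ev_\C$, the correct condition is that $\ev_\C$ admits a left adjoint, i.e.\ preserves limits, not that it preserves compacts.

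With this in place, both directions follow from your equivalence $\C\simeq\LMod_{\End(G)^{op}}$. Passing between $\End(G)$ and its opposite is handled by \cite[Remarks 4.2.1.37 and 4.6.4.15]{Lur17}, as in the paper.
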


\begin{proof}
This follows by combining \cite[Proposition 11.3.2.4]{Lur18} with \cite[Remark 4.2.1.37 \& Remark 4.6.4.15]{Lur17}.
\end{proof}

\begin{remark}\label{rem_smooth}
	Let $\C$ be smooth and $E \in \C$ be a compact generator.
	Lurie shows in the proof of \cite[Proposition 11.3.2.4]{Lur18} that $A \coloneqq \End(E) \in \mathrm{Alg}_k$ is smooth and $\C \simeq \LMod_{A^{rev}}$, where $A^{rev}$ is the opposite algebra of $A$.
\end{remark}

\begin{corollary}[{\cite[Lemma 2.1.13]{Lam25Perv}}]\label{compact_generator_smooth}
	Let $\C \in \Prlo_k$ be a smooth category and $E \in \C$ be a compact generator.
	Let $X$ be a derived stack over $k$.
	Then $F \in \C_X$ is pseudo-perfect if and only if $F(E) \in \Perf(X)$.
\end{corollary}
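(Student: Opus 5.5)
Since $\C$ is smooth, \cref{relation_finiteness_conditions} gives it a compact generator, so $E$ is legitimate input. The plan is to reduce everything to the description of $\C_X$ as a functor category from \cref{tensor_product}, and then use that $E$ generates $\C^\omega$ under finite colimits, shifts and retracts (\cref{generators_stable}).

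Under \cref{tensor_product}, an object $F \in \C_X$ is an exact $k$-linear functor $F \colon (\C^\omega)^{op} \to \QCoh(X)$. By \cref{pseudo_perfect}, $F$ is pseudo-perfect exactly when $F(c) \in \Perf(X)$ for every $c \in \C^\omega$. One direction is then immediate: $E \in \C^\omega$, so if $F$ is pseudo-perfect then $F(E) \in \Perf(X)$.

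For the converse, assume $F(E) \in \Perf(X)$ and look at the full subcategory
\[
\mathcal{S} \coloneqq \{ c \in \C^\omega \mid F(c) \in \Perf(X) \} \subset \C^\omega .
\]
The step is to show $\mathcal{S}$ is closed under finite colimits, shifts and retracts.
\begin{enumerate}\itemsep=0.2cm
\item Since $F$ is exact, it sends cofiber sequences in $\C^\omega$, equivalently fiber sequences in $(\C^\omega)^{op}$, to fiber sequences in $\QCoh(X)$. It also commutes with shifts up to sign, and it preserves retracts because any functor does.
\item $\Perf(X)$ is a stable subcategory of $\QCoh(X)$ closed under retracts, since perfect complexes are the dualizable objects and these are closed under fibers, shifts and retracts.
\end{enumerate}
Together these give the closure properties of $\mathcal{S}$. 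The set $\{E\}$ is a set of compact generators, so condition (2) of \cref{generators_stable} says $\C^\omega$ is the smallest full subcategory of $\C$ containing $E$ and closed under these operations. Since $E \in \mathcal{S}$, this forces $\mathcal{S} = \C^\omega$, which is exactly pseudo-perfectness of $F$.

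The main point to be careful about is the case where $X$ is a general derived stack rather than affine. There $\Perf(X)$ should be read as the limit of $\Perf(\Spec A)$ over affines mapping to $X$, and we need it to be thick inside $\QCoh(X)$. The clean way to see this is that perfectness can be checked after pullback to each affine, and pullback is exact. So the thick-closure argument works pointwise on affines, or equally directly in $\QCoh(X)$ via the dualizability characterization.

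The smoothness hypothesis is used only to ensure a compact generator exists. The statement is really about any compactly generated $\C$ with a compact generator $E$, and the route through \cref{lemma_smoothness} is not needed.
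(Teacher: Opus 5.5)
Your argument is correct. Exactness of $F$ on $(\C^\omega)^{op}$, together with thickness of $\Perf(X)$ in $\QCoh(X)$ (checked after pullback to affines), makes $\{c \in \C^\omega \mid F(c) \in \Perf(X)\}$ a thick subcategory containing $E$. It is therefore all of $\C^\omega$ by \cref{generators_stable}.

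The paper gives no proof of its own here; it only cites \cite[Lemma 2.1.13]{Lam25Perv}. Its placement right after \cref{lemma_smoothness} and \cref{rem_smooth} suggests the intended route is the Morita model $\C \simeq \LMod_{A^{rev}}$ with $A=\End(E)$. In that model, $F(E)$ is the underlying quasi-coherent object of an $X$-family of $A^{rev}$-modules, which is the packaging used later in \cref{notation:pseudo_perfect}.

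Even in that model, identifying pseudo-perfectness with perfectness of the underlying object still needs the fact that compact $A^{rev}$-modules form the thick closure of $A^{rev}$. That is exactly your step, so the two routes coincide in substance. Your version has the small advantage of showing directly that smoothness plays no role once a compact generator $E$ is given.
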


\begin{lemma}[{\cite[Lemma II.2.20]{DPS25}}]\label{pseudo_perfect_vs_compact}
	Let $\C \in \Prlo_k$ and let $\Spec (A) \in \dAff_k$.
\begin{enumerate}\itemsep=0.2cm
    \item If $\C$ is smooth, then any pseudo-perfect family in $\C_A$ is compact.
    \item If $\C$ is proper, then any compact object in $\C_A$ is pseudo-perfect.
\end{enumerate}
\end{lemma}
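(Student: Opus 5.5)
We need to prove Lemma (DPS25 II.2.20): (1) if $\C$ is smooth, pseudo-perfect families in $\C_A$ are compact; (2) if $\C$ is proper, compact objects in $\C_A$ are pseudo-perfect.

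For (1), I will use \cref{lemma_smoothness} and \cref{rem_smooth}. Choose a compact generator $E$, so that $\C \simeq \LMod_{B}$ with $B = \End(E)^{rev}$ smooth. Then
\[
\C_A \simeq \LMod_B \otimes_k \Mod_A \simeq \LMod_{B\otimes_k A}.
\]
Under this identification, evaluation at $E$ (that is, $F \mapsto F(E)$) becomes the forgetful functor $\LMod_{B\otimes A} \to \Mod_A$. By \cref{compact_generator_smooth}, $F$ is pseudo-perfect exactly when its underlying $A$-module is perfect. So the claim reduces to a standard fact: if $B$ is smooth, then a $B_A$-module that is perfect over $A$ is perfect over $B_A$.

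To prove this fact, write $M$ via the diagonal bimodule:
\[
M \simeq B_A \otimes_{B_A} M \simeq B_A \otimes_{B_A\otimes_A B_A^{op}} \bigl(B_A \otimes_A M\bigr).
\]
Smoothness means $B$ is a perfect $B\otimes B^{op}$-module, and this is stable under the base change to $A$. Hence $B_A$ lies in the thick subcategory generated by $B_A\otimes_A B_A^{op}$. Tensoring over $B_A\otimes_A B_A^{op}$ with $M$, we see that $M$ lies in the thick subcategory generated by $B_A\otimes_A M$. The latter is compact because $M$ is perfect over $A$ and the free functor $\Mod_A\to \LMod_{B_A}$ preserves compacts. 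The bookkeeping of left/right module structures in this bar-type resolution is the main obstacle. It is clean if one works with the $B_A$-$B_A$-bimodule structure and uses that the relevant functor is exact and preserves retracts.

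For (2), I will use properness to show that each $E' \in \C^\omega$ gives an evaluation functor $\C_A \to \Mod_A$, $F\mapsto F(E')$, which preserves compact objects. This functor is $\Id\otimes$ (something) applied to the dual data. Concretely, $\C_A \simeq \Funst_k((\C^\omega)^{op},\Mod_A)$, and the compact objects are generated, under finite colimits, shifts and retracts, by the representables $h_{C}\otimes A$ for $C\in\C^\omega$; these correspond to $C\boxtimes A$ in $\C\otimes\Mod_A$. Evaluating $h_C\otimes A$ at $E'$ gives $\Map_\C(E',C)\otimes_k A$.

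Properness, meaning that $\coev_\C$ preserves compacts, is equivalent to the mapping spectra between compact objects being perfect $k$-modules. This holds because the mapping spectra are obtained by applying $\ev$ to $\coev$-type compact objects; equivalently, the canonical description shows $\Map(E',C)$ is the image of a compact object under $\Mod_k$-linear functors that preserve compacts. Thus $\Map_\C(E',C)\otimes_k A \in \Perf(A)$. The class of $F$ with $F(E')\in\Perf(A)$ for all compact $E'$ is closed under finite colimits, shifts and retracts. It therefore contains all compact objects, which is precisely pseudo-perfectness in the sense of \cref{pseudo_perfect}.
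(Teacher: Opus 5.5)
The paper gives no proof of this lemma; it only cites \cite{DPS25}, and the argument goes back to \cite{TV07}. So I assess your proposal on its own terms.

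\textbf{Part (1)} is the standard argument and is correct. The clean way to handle the bookkeeping you flag is to apply the exact, retract-preserving functor $N\mapsto N\otimes_{B_A}M$ from $B_A$-bimodules to left $B_A$-modules.
\begin{itemize}
\item It sends the diagonal bimodule $B_A$ to $M$.
\item It sends $B_A\otimes_A B_A$ to the induced module $B_A\otimes_A M$, which lies in $\Perf(B_A)$ because $M\in\Perf(A)$.
\item $B_A$ lies in the thick closure of $B_A\otimes_A B_A$, because perfectness of $B$ over $B\otimes_k B^{op}$ survives $-\otimes_k A$.
\end{itemize}
Your displayed formula $B_A\otimes_{B_A\otimes_A B_A^{op}}(B_A\otimes_A M)$, read literally, uses up both actions and returns only an $A$-module, so the functor formulation is the one to write down.

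\textbf{The gap is in (2)}, exactly at the step where the hypothesis is used. You read properness as ``$\coev_\C$ preserves compact objects'', as \cref{def_finiteness_conditions} literally says. You then assert this is equivalent to $\Map_\C(E',C)\in\Perf(k)$ for all $E',C\in\C^\omega$. That equivalence is false.
\begin{itemize}
\item $\coev_\C$ preserving compacts means the diagonal bimodule is compact, i.e.\ smoothness in the sense of \cref{lemma_smoothness}.
\item $\C=\Mod_{k[x]}$ has this property, yet $\Map(k[x],k[x])\simeq k[x]\notin\Perf(k)$.
\item Taking $A=k$, the compact object $k[x]\in\C_k\simeq\C$ is not pseudo-perfect. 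So under your reading, statement (2) is simply false.
\end{itemize}
The sentence about ``applying $\ev$ to $\coev$-type compact objects'' cannot rescue this.

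What you need concerns the other duality map. With $\C^\vee\simeq\mathrm{Ind}((\C^\omega)^{op})$, one has $\ev_\C((E')^\vee\boxtimes C)\simeq\Map_\C(E',C)$. The objects $(E')^\vee\boxtimes C$ are compact and generate $(\C^\vee\otimes_k\C)^\omega$ under finite colimits, shifts and retracts. Hence $\ev_\C$ preserves compacts if and only if all these mapping objects are perfect.

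This is the standard meaning of ``proper''. Dually, ``smooth'' standardly means that $\coev_\C$ preserves compacts, which is what \cref{lemma_smoothness}, \cref{relation_finiteness_conditions} and your own part (1) presuppose. In other words, \cref{def_finiteness_conditions} has the two conditions interchanged, and you followed it in (2) but not in (1).

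Once properness is read as ``$\ev_\C$ preserves compacts'', the rest of your argument for (2) goes through:
\begin{itemize}
\item the objects $C\boxtimes A$ generate $\C_A^\omega$;
\item $(C\boxtimes A)(E')\simeq\Map_\C(E',C)\otimes_k A\in\Perf(A)$;
\item evaluation at $E'$ is exact and $\Perf(A)$ is thick.
\end{itemize}
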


\begin{remark}\label{cpt_gen_tens_product}
	Let $\C \in \Prlo_k$ smooth and $\Spec (A) \in \dAff_k$.
	Let $E$ be a compact generator of $\C$ (\cref{relation_finiteness_conditions}).
	Then $E \otimes_k A$ is a compact generator of $\C_A$. Indeed, the functor $\mathrm{Forget}\colon \C_A \to \C$ is the right adjoint of the natural morphism $\C \rightarrow \C_A$ and it is conservative, thus we get the equivalence
	\[
\Map_{\C_A}(E \otimes_k A, -) \simeq \Map_{\C}(E, \mathrm{Forget}(-)) \colon \C_A \to \Mod_k.
\]
\end{remark}

\begin{corollary}[{\cite[Corollary 4.1.24]{Lam25Perv}}]\label{pseudo_perf_Hom}
	Let $\C \in \Prlo_k$ smooth.
	Let $X \in \dSt_k$ and $F_1, F_2 \in \C_X$ pseudo-perfect.
	Then $\Hom_{\C_X}(F_1, F_2) \in \Perf(X)$.
\end{corollary}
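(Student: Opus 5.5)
The plan is to reduce the statement to compact generation plus the dualizability formalism of \cref{functoriality_tens_prod}. Let $E$ be a compact generator of $\C$, which exists by \cref{relation_finiteness_conditions}. By \cref{compact_generator_smooth}, a family $F \in \C_X$ is pseudo-perfect if and only if $F(E) \in \Perf(X)$. Since both sides of the claim satisfy descent in $X$, we may first reduce to the case $X = \Spec(A)$ affine. For this step we write $X$ as a colimit of affines and use that $\C_X \simeq \lim \C_A$, which holds because $\C$ is dualizable. We also use that pseudo-perfectness and perfectness of the mapping object are both local conditions, and that $\Hom_{\C_X}$ is computed as a limit of the local $\Hom_{\C_A}$ via base change. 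This compatibility with pullback along $\Spec(B)\to\Spec(A)$ must be checked: it holds because $F_1$ is compact in $\C_A$ by \cref{pseudo_perfect_vs_compact}, so $\Hom(F_1,-)$ commutes with the base change $-\otimes_A B$.

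In the affine case $X = \Spec(A)$, \cref{pseudo_perfect_vs_compact} gives that $F_1$ is compact in $\C_A$. By \cref{cpt_gen_tens_product}, $E\otimes_k A$ is a compact generator of $\C_A$. Hence $F_1$ lies in the thick subcategory generated by $E \otimes_k A$, that is, it is obtained from it by finitely many cofibers, shifts and retracts (\cref{generators_stable}). The $A$-linear mapping object $\Hom_{\C_A}(-,F_2)$ is exact and sends retracts to retracts. Since $\Perf(A)$ is thick, it therefore suffices to show that $\Hom_{\C_A}(E\otimes_k A, F_2) \in \Perf(A)$.

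For this we use the adjunction of \cref{cpt_gen_tens_product}, upgraded to an $A$-linear one: $\Hom_{\C_A}(E \otimes_k A, F_2)$ identifies with the $A$-module given by evaluating $F_2$, viewed as an exact functor $(\C^\omega)^{op} \to \Mod_A$, at $E$. Up to the variance convention in \cref{tensor_product}, this is exactly $F_2(E)$, which is perfect by \cref{compact_generator_smooth} since $F_2$ is pseudo-perfect.

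I expect the main obstacle to be the descent reduction, specifically the base change compatibility of $\Hom_{\C_X}$ for general derived stacks $X$. Here compactness of $F_1$ locally (from smoothness) is the key input. A secondary obstacle is checking that the $A$-enriched adjunction identification matches the functor-category description of \cref{tensor_product}.
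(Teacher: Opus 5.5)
Your proof is correct. Compactness of $F_1$ in $\C_A$ (\cref{pseudo_perfect_vs_compact}), thick generation of $\C_A^\omega$ by $E\otimes_k A$ (\cref{cpt_gen_tens_product}), and the identification $\Hom_{\C_A}(E\otimes_k A,F_2)\simeq F_2(E)$ settle the affine case, and your base-change check, which uses that $\Hom_{\C_A}(F_1,-)$ is colimit-preserving and hence $\Mod_A$-linear, handles the descent to arbitrary $X$.

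The paper gives no argument of its own here, only the citation \cite[Corollary 4.1.24]{Lam25Perv}. It does, however, state the result immediately after exactly these ingredients, so your route appears to be essentially the intended one.
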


\subsection{\texorpdfstring{Moduli of objects and $t$-structures}{Moduli of objects and t-structures}}\label{moduli_of_objects}
	In this paragraph we introduce the moduli of objects of an $\infty$-category of \textit{finite type}.
	This construction firstly appeared in \cite{TV07}, but we will follow the presentation given in \cite[Section 5]{AG14} (see also \cite[Section II.2]{DPS25} and \cite[Section 1.5]{Por25}).
	Notice that our convention for moduli of objects in a category $\C$ differs from the one in \cite{AG14} (but agrees with the one in \cite{TV07, DPS25, Por25}), indeed our moduli of objects of a category $\C$ corresponds the moduli of objects of the linear dual of $\C$ in \cite{AG14}.
	The two points of view are equivalent and do not affect the results nor the proofs in this paragraph.

\begin{recollection}\label{moduli_of_objects_recollection}
	Let $\C \in \Prlo_k$.
	Consider the presheaf
\begin{align*}
	\widehat{\M}_\C \colon &\dAff_k^{op} \to \mathrm{Spc}\\
	& \Spec(A) \mapsto \C_A^{\simeq}.
\end{align*}
	Since $\C$ is dualizable (\cref{finiteness_conditions}), the functor $\C \otimes_k -$ commutes with limits.
	Since $\Mod(-)$ satisfies faithfully flat descent (\cite[Corollary D.6.3.3]{Lur17}), the presheaf $\widehat{\M}_\C $ defines a derived stack over $k$.
	The stack $\widehat{\M}_\C$ is too big and has no chance to be representable.
	The derived moduli of objects of $\C$ is defined as the presheaf 
\begin{align*}
	\M_\C \colon & \dAff_k^{op} \to \mathrm{Spc}\\
	& \Spec(A) \to \C_A^{pp}
\end{align*}
where $\C_A^{pp} \subset \C_A$ is the maximal groupoid spanned by pseudo-perfect families over $A$ in the sense of \cref{pseudo_perfect}.
	Since $\Perf(-)$ satisfies hyper-descent (\cite[Proposition 2.8.4.2-(10)]{Lur18}), the sub-presheaf
\[
\M_\C \subset \widehat{\M}_\C
\]
is a derived substack by \cref{functoriality_tens_prod}.
\end{recollection}

	The main theorem of \cite{TV07} translates as:
\begin{theorem}[{\cite[Theorem 5.8 \& Corollary 5.9]{AG14}}]\label{repr_moduli_of_objects}
	Let $\C \in \Prlo_{k}$ be of finite type.
	Then $\M_\C$ is a locally geometric derived stack locally of finite presentation over $k$.
	Furthermore, the tangent complex at a point $x \colon  \Spec(A) \to \M_\C$ corresponding to a pseudo-perfect family of objects $M_x \in \C_A$ is given by
\[
x^\ast\mathbb{T}_{\M_\C} \simeq \Hom(M_x, M_x)[1].
\]
\end{theorem}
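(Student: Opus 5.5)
The plan is to reduce to the case of modules over a finitely presented algebra and then build $\M_\C$ out of Toën--Vaquié's stack of perfect complexes $\M_{\Mod_k}=\mathbf{Perf}$ using fiber products and retracts. The first observation is that the assignment $\C\mapsto \M_\C$ turns finite colimits in $\Prlo_k$ into finite limits of derived stacks. By \cref{tensor_product} and \cref{pseudo_perfect}, for $\Spec(B)\in\dAff_k$ we have
\[
\M_\C(B)\simeq \Funst_k((\C^\omega)^{op},\Perf(B))^{\simeq},
\]
which is exactly this contravariant functoriality. The compact objects of $\Prlo_k$ are retracts of finite cell objects. These are obtained from $\Mod_k$ by finitely many pushouts along the maps $\LMod_{k\langle x\rangle}\to \Mod_k$ (with $x$ in some degree) and $\LMod_{k\langle x\rangle}\to\LMod_{k\langle x,y\rangle}$. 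Concretely, $\C\simeq\LMod_A$ with $A$ a retract of a finite cell $k$-algebra, using \cref{relation_finiteness_conditions} and \cref{lemma_smoothness}. Hence $\M_\C$ is a retract of a finite iterated fiber product of stacks of the form $\M_{\LMod_{k\langle x_1,\dots,x_n\rangle}}$ and $\mathbf{Perf}$.

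Next I would treat the basic pieces. The stack $\mathbf{Perf}$ is locally geometric and locally of finite presentation; this is the key input from \cite{TV07}, which I would take as known. It is proven by exhausting $\mathbf{Perf}$ by the substacks of complexes with Tor-amplitude in $[a,b]$ and inducting on $b-a$ via cones. For a free algebra $k\langle x_1,\dots,x_n\rangle$, a pseudo-perfect family over $B$ is a perfect $B$-module $M$ together with $n$ endomorphisms of prescribed degrees. Therefore
\[
\M_{\LMod_{k\langle x_1,\dots,x_n\rangle}}\to\mathbf{Perf}
\]
is the total space of the linear stack $\mathbb{V}\big(\bigoplus_i \mathcal{E}nd(\mathcal{M})[-d_i]\big)$ over $\mathbf{Perf}$. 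Since $\mathcal{E}nd$ of the universal object is perfect, this linear stack is geometric and of finite presentation over $\mathbf{Perf}$. Local geometricity and local finite presentation are stable under finite fiber products, which handles all finite cell categories.

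For the retract step, let $\M_\C\xrightarrow{i}\M_{\C'}\xrightarrow{r}\M_\C$ compose to the identity. Local finite presentation passes to retracts, because commuting with filtered colimits of $B$ is inherited by retracts in $\mathrm{Spc}$. Local geometricity is the step I expect to be the main obstacle. I would first try to show that $i$ is representable and of finite presentation by writing $\M_\C\simeq \M_{\C'}\times_{\M_{\C'}\times\M_{\C'}}\M_{\C'}$, using the maps $(\mathrm{id},\, i\circ r)$ and the diagonal. The diagonal of a locally geometric stack is representable, so $\M_\C$ is then locally geometric. Alternatively, one can check the Artin--Lurie representability criteria directly: nilcompleteness, integrability, infinitesimal cohesiveness and the existence of a perfect cotangent complex. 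These criteria are preserved by retracts.

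Finally, for the tangent complex at $x\colon\Spec(A)\to\M_\C$ with associated family $M_x$, I would compute the fiber of $\M_\C(A\oplus N)\to\M_\C(A)$ over $M_x$ for $N\in\Mod_A^{\geq 0}$. Since $\C_{A\oplus N}\simeq\C_A\otimes_{A}\Mod_{A\oplus N}$, deformation theory identifies this fiber with
\[
\Map_{\C_A}(M_x, M_x\otimes_A N[1]).
\]
The finite-type hypothesis makes $\C$ smooth by \cref{relation_finiteness_conditions}, so by \cref{pseudo_perf_Hom} the complex $\Hom(M_x,M_x)$ is perfect. Dualizing then gives $\Map_A\big((\Hom(M_x,M_x)[1])^\vee,N\big)$, so
\[
x^\ast\mathbb{L}_{\M_\C}\simeq(\Hom(M_x,M_x)[1])^\vee
\qquad\text{and}\qquad
x^\ast\mathbb{T}_{\M_\C}\simeq\Hom(M_x,M_x)[1].
\]
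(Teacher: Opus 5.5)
The paper gives no argument of its own here; it imports the statement from Toën--Vaquié via Antieau--Gepner. Your outline is the standard strategy behind that result: present $\C$ as $\LMod_A$ with $A$ a retract of a finite cell algebra, turn cell attachments into fiber products of $\mathbf{Perf}$ and linear stacks over it, pass to retracts, and compute the tangent complex by deformation theory. The cell and fiber-product part is fine. The retract step, which you rightly flag, fails as written. The identification
\[
\M_\C\simeq \M_{\C'}\times_{\M_{\C'}\times\M_{\C'}}\M_{\C'}
\]
(with the maps $(\mathrm{id}, i\circ r)$ and the diagonal) says that the image of an idempotent $e=i\circ r$ is the equalizer of $\mathrm{id}$ and $e$. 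That is a $1$-categorical fact and is false for stacks valued in $\infty$-groupoids. Take the trivial retract $\C'=\C$, $i=r=\mathrm{id}$: the right-hand side is the derived loop (inertia) stack of $\M_\C$. Its $B$-points are pairs $(M,\varphi)$ with $\varphi$ an automorphism of $M$, so it is not $\M_\C$. In an $\infty$-category the image of $e$ is the sequential limit $\lim(\cdots\xrightarrow{e}\M_{\C'}\xrightarrow{e}\M_{\C'})$. This is an infinite limit, so no finite-limit or diagonal argument shows that $i$ is representable.

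Your fallback is the right idea, but it cannot be applied to $\M_\C$ directly. Artin--Lurie type representability theorems concern functors that are $n$-truncated on discrete rings, and they carry hypotheses on the base ring; $\M_\C$ is not truncated. You first need an open exhaustion compatible with $i$ and $r$:
\begin{enumerate}
\item Realize the retraction by algebra maps $A\to A'\to A$. Then $i$ and $r$ are restrictions of scalars and commute with the forgetful maps to $\mathbf{Perf}$.
\item Pull back $\mathbf{Perf}^{[a,b]}$ along these forgetful maps. This exhibits $\M_\C^{[a,b]}$ as a retract of the geometric stack $\M_{\C'}^{[a,b]}$, and both are truncated on discrete rings.
\item Check the criteria piece by piece. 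Nilcompleteness, infinitesimal cohesiveness, integrability and finite presentation are preserved by retracts. The cotangent complex exists because a retract of a corepresentable functor is corepresented by the splitting of the induced idempotent in $\Mod_B$.
\end{enumerate}

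Two smaller points. First, \cref{relation_finiteness_conditions} and \cref{lemma_smoothness} only give you a smooth $A$, not a compact one. That $A$ can be taken compact in $\mathrm{Alg}_k$, hence a retract of a finite cell algebra, has to come from compactness of $\C$ in $\Prlo_k$ itself. Second, in the tangent computation, moving $N$ out of $\Hom_{\C_A}(M_x,M_x\otimes_A N)$ uses that $M_x$ is compact in $\C_A$ (\cref{pseudo_perfect_vs_compact}), not merely that $\Hom(M_x,M_x)$ is perfect.
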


\begin{remark}[{Functoriality of $\M_{(-)}$}]\label{remark:functoriality_moduli_of_objects}
	Let $f \colon \C \to \D$ in $\Prlo_k$ between finite type categories and let $g \colon \D \to \C$ be its right adjoint.
    Assume that $g$ is itself a left adjoint, so that $f$ preserve compact objects.
	Then we have an induced map on moduli of objects
\[
g_{(-)} \colon \M_\D \to \M_\C
\]
	given by $g_A$ on each $\Spec(A) \in \dAff_k$. 
	Indeed this map corresponds to the pre-compoosition with $f$ under the equivalences $\D_A \simeq \Funst_k((\D^{\omega})^{op}, \Mod_A)$ and $\C_A \simeq \Funst_k((\C^{\omega})^{op}, \Mod_A)$. 
\end{remark}

\subsubsection{Moduli of flat objects}\label{subsection:moduli_flat_objects}

	In this paper we are mainly interested in moduli of objects in abelian categories.
	If $\C \in \Prlo_k$ is equipped with a $t$-structure $\tau$, we can introduce a substack 
\[
\M_\C^{[0,0]} \subset \M_\C
\] 
whose classical truncation parametrizes families of objects in the heart $\tau$.
	Under suitable assumption on $\tau$, the stack $\M_\C^{[0,0]}$ is an open substack of $\M_\C$, so that it is also locally geometric and locally of finite presentation when $\C$ is of finite type.

\begin{definition}\label{accessible_t_structure}
	Let $\C \in \Prlo_k$ equipped with a $t$-structure $\tau$. 
	We say that
\begin{enumerate}\itemsep=0.2cm
    \item $\tau$ is $\omega$-accessible if the full subcategory $\C_{\leq 0} \subseteq \C$ is stable under filtered colimits;
    \item $\tau$ is left-complete if $\bigcap \C_{\geq n} \simeq 0$;
    \item $\tau$ is right-complete if $\bigcap \C_{\leq n} \simeq 0$;
    \item $\tau$ is non-degenerate if it is both right-complete and left-complete.
    \item $\tau$ is admissible if it is $\omega$-accessible and non-degenerate.
\end{enumerate}
\end{definition}

\begin{recollection}\label{induced_t_structure}
	Let $\C \in \Prlo_k$.
	Then a morphism $f \colon \Spec (B) \to \Spec (A)$ in $\dAff_k$ corresponding to a map $A \to B$ induces an adjunction
\[
f_\C^\ast \colon \C_{A} \leftrightarrows \C_{B}\colon f_{\C, \ast}
\]
where $f_{\C, \ast}$ is the forgetful functor.
	Denote by $p \colon \Spec(A) \to \Spec(k)$ be the structural morphism.
	As explained in \cite[Section II.2.5.1]{DPS25}, given an $\omega$-accessible right-complete $t$-structure $\tau$ on $\C$, we get an induced $\omega$-accessible right-complete $t$-structure $\tau_A$ on $\C_A$ for any $Spec (A) \in \dAff_k$ by declaring that $F \in \C_A$ is (co)connective if $p_{_\C, \ast} (F) \in \C$ is (co)connective.
	Moreover $\tau_A$ is non-degenerate if $\tau$ is.\\ \indent
	For $X\in \dSt_k$, we can also define an induced $t$-structure $\tau_X$ on $\C_X$ whose connective part is defined by descent.
	That is, $F \in (\C_X)_{\geq 0}$ if and only if $f^\ast F \in (\C_A)_{\geq 0}$ for any $f \colon \Spec (A) \to X$.
\end{recollection}

\begin{definition}[{\cite[Definition II.2.44]{DPS25}}]\label{def_tau_flat}
	Let $\C \in \Prlo_k$ equipped with an $\omega$-admissible right-complete $t$-structure $\tau$ and $\Spec (A) \in \dAff_k$.
	We say that a family $F \in \C_A$ is $\tau$-flat over $\Spec (A)$ (or $\tau_A$-flat) if for every $M \in \Mod_A^\heartsuit$ one has $F \otimes_A M \in \C_A^\heartsuit$.
\end{definition}

\begin{remark}\label{tau_flat_in_heart}
	In the setting of \cref{def_tau_flat}, when $A$ is discrete, a $\tau$-flat family $F$ over $A$ lies in $\C_A^\heartsuit$.
	Indeed in this case one has $A \in \Mod_A^\heartsuit$, so that $F \simeq F \otimes_A A \in \C_A^\heartsuit$.
	The converse holds when $A$ is a field.
\end{remark}

	The key property for a $t$-structure to define an open substack of $\M_\C$ is linked to the following definition:

\begin{definition}
	Let $\C \in \Pr_k^{L, \omega}$ equipped with an admissible $t$-structure $\tau$.
	Let $\S \in \dAff_k$ and let $F \in \C_S$.
	The flat locus of $F$ is the functor 
\[
\Phi_F \colon  (\dAff_k)^{op}_{/_{S}} \to \Spc
\]
\begin{align*}
(T \xrightarrow{f} \S) \mapsto 
\left\{
    \begin{aligned}
        & \ast \ \ \text{if $f^\ast(F)$ is $\tau$-flat relative to $T$,} \\
        & \emptyset \ \ \ \ \text{otherwise.}                  
    \end{aligned}
\right.
\end{align*}
\end{definition}

\begin{definition}[{\cite[Definition II.2.48]{DPS25}}]\label{opennes_flatness}
	Let $\C \in \Pr_k^{L, \omega}$ equipped with an admissible $t$-structure $\tau$.
	We say that $\tau$ universally satisfies openness of flatness if for every $S \in \dAff_k$ and every $F \in \C_S$ the map $\Phi_F \to S$ is representable by an open immersion.
\end{definition}

\begin{proposition}[{\cite[Proposition II.2.56]{DPS25}}]\label{moduli_flat_objects}
	Let $\C \in \Pr_k^{L, \omega}$ be of finite type equipped with an admissible $t$-structure $\tau$.
	Then $\tau$ universally satisfies openness of flatness if and only if then the structural morphism
\[
\M_\C^{[0,0]} \to \M_\C
\]
is representable by a Zariski open immersion.
	In particular, in this case $\M_\C^{[0,0]}$ is a locally geometric derived stack, locally of finite presentation over $k$.
\end{proposition}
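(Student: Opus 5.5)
We need to show: for $\C$ of finite type with an admissible $t$-structure $\tau$, $\tau$ universally satisfies openness of flatness if and only if $\M_\C^{[0,0]} \to \M_\C$ is representable by a Zariski open immersion. Moreover, in that case $\M_\C^{[0,0]}$ is locally geometric and locally of finite presentation.

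The plan is to identify, for every point $x\colon S=\Spec(A)\to \M_\C$ classifying a pseudo-perfect family $F_x\in\C_A$, the base change
\[
\M_\C^{[0,0]}\times_{\M_\C} S
\]
with the flat locus $\Phi_{F_x}$. Here $\M_\C^{[0,0]}$ is the substack of $\M_\C$ whose $T$-points are the pseudo-perfect families that are $\tau$-flat relative to $T$. Since $\tau$-flatness is stable under base change, this is indeed a sub-presheaf. Unwinding the fibre product, a map $T\to S$ factors through $\M_\C^{[0,0]}$ exactly when $f^\ast F_x$ is $\tau_T$-flat. This is precisely the definition of $\Phi_{F_x}$. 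Hence the fibre product equals $\Phi_{F_x}$ as a subfunctor of $S$.

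\textbf{Forward direction.} If $\tau$ universally satisfies openness of flatness, then each base change $\Phi_{F_x}\to S$ is an open immersion. Hence $\M_\C^{[0,0]}\to\M_\C$ is representable by open immersions.

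\textbf{Converse.} Here the subtlety is that openness of flatness is required for \emph{all} $F\in\C_S$, not only for pseudo-perfect ones. I would first show that a $\tau$-flat family over an affine $T$ is automatically pseudo-perfect, at least when $\C$ is of finite type. The idea is to use a compact generator $E$ (\cref{relation_finiteness_conditions}) and \cref{compact_generator_smooth}: flatness should make $F(E)$ have finite Tor-amplitude, and finite presentation should come from compactness. This step is doubtful, so I would instead reduce the general case as follows.

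\begin{enumerate}
\item Write $F$ as a filtered colimit of compact objects.
\item Use that $\Phi_F$ is detected on points. Here I would invoke the $\omega$-accessibility of $\tau$, so that flatness of a colimit is controlled by the truncations.
\end{enumerate}

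If this reduction fails, I would note that the cited source presumably establishes the equivalence with $\M_\C$ replaced by $\widehat{\M}_\C$. For $\widehat{\M}_\C$ the identification of the fibre with $\Phi_F$ holds for arbitrary $F$, and the converse is then immediate.

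\textbf{Main obstacle.} The key difficulty is exactly this converse, that is, controlling non-pseudo-perfect families. I expect to handle it by proving that flat families are pseudo-perfect, relying on properness-type arguments or on the finite type hypothesis.

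\textbf{Final clause.} Once the open immersion is established, the last statement follows quickly. \cref{repr_moduli_of_objects} gives that $\M_\C$ is locally geometric and locally of finite presentation. Both properties are inherited by open substacks, since open immersions are étale, of finite presentation and representable.
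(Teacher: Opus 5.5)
Your identification $\M_\C^{[0,0]}\times_{\M_\C}S\simeq\Phi_{F_x}$ for $x\colon S\to\M_\C$ is correct. It gives the ``only if'' direction and the final clause exactly as you say; the paper itself gives no argument beyond the citation to \cite{DPS25}.

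The converse, however, is a genuine gap, and none of your three routes closes it.
\begin{itemize}
\item Flat families need not be pseudo-perfect. For $\C=\Mod_k$ with the standard $t$-structure and $S=\Spec(k)$, the module $k^{\oplus\mathbb{N}}$ is $\tau$-flat but not perfect.
\item The colimit reduction fails, because flat loci of filtered colimits behave like infinite intersections of opens. Take $k=\mathbb{Q}$, $A=k[t]$ and $F=\bigoplus_{n\in\mathbb{Z}}k[t]/(t-n)$. Each finite partial sum is perfect with open flat locus. But $\Phi_F$ contains the generic point, since $F\otimes_{k[t]}k(t)\simeq 0$. It contains none of the points $t=n$, since $\pi_1(F\otimes_{k[t]}k[t]/(t-n))\simeq k$. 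Any open subscheme of $\mathbb{A}^1_k$ containing the generic point contains all but finitely many of these points, so $\Phi_F$ is not open.
\item Your fallback via $\widehat{\M}_\C$ proves a different statement.
\end{itemize}
Moreover, $\M_{\Mod_k}^{[0,0]}\to\M_{\Mod_k}$ (vector bundles inside perfect complexes) is an open immersion. So if \cref{opennes_flatness} is read literally, for \emph{all} $F\in\C_S$, the ``if'' direction is false, and no argument can prove it.

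The missing observation is about the definition, not a lemma. The proposition is correct only when openness of flatness is required for pseudo-perfect $F$, i.e.\ for those $F$ classified by maps $S\to\M_\C$. This is also the only reading under which \cref{openness_flatness_perv}-$(2)$ can hold: $X$ a point with $\mathfrak{p}=0$ is exactly the example above. Under that reading your fibre identification gives both implications at once, and nothing about non-pseudo-perfect families is needed.
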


\begin{notation}
	In the setting of \cref{moduli_flat_objects}, we set $\M_\C^\heartsuit \coloneqq t_0 \M_\C^{[0,0]}$ to be the classical truncation of the derived stack $ \M_\C^{[0,0]}$.
\end{notation}

	A priori, \cref{moduli_flat_objects} only guarantees that $\M_\C^{[0,0]}$ and its truncation $\M_\C^\heartsuit$ have an open exhaustion by (derived) algebraic stacks.
	When $\Spec (A) \in \dAff_k$ is discrete, the $\infty$-grupoid $\M_\C^{[0,0]}(\Spec(A))$ takes values in $1$-grupoids by \cref{tau_flat_in_heart}.
	Hence $\M_C^{[0,0]}$ is $1$-truncated in the sense of \cite{TV08}.
	Then \cite[Lemma 2.19]{TV07} implies the following

\begin{corollary}\label{geometric_heart}
	Let $\C \in \Pr_k^{L, \omega}$ be of finite type equipped with an admissible $t$-structure $\tau$.
	If $\tau$ universally satisfies openness of flatness, then $\M_\C^{[0,0]}$ is a $1$-Artin stack.
	In particular its truncation $\M_\C^\heartsuit$ is an algebraic stack locally of finite presentation over $k$.
\end{corollary}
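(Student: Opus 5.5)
The plan is to exhibit $\M_\C^{[0,0]}$ as an open substack of $\M_\C$ that is $1$-truncated, and then to invoke \cite[Lemma 2.19]{TV07}. That lemma says a locally geometric derived stack, locally of finite presentation, whose values on discrete rings are $1$-groupoids, is in fact a derived $1$-Artin stack.

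\textbf{Local geometricity.} First, since $\C$ is of finite type, \cref{repr_moduli_of_objects} shows that $\M_\C$ is locally geometric and locally of finite presentation over $k$. Since $\tau$ universally satisfies openness of flatness, \cref{moduli_flat_objects} shows that $\M_\C^{[0,0]}\to\M_\C$ is a Zariski open immersion. Hence $\M_\C^{[0,0]}$ inherits being locally geometric and locally of finite presentation. Concretely, it is an increasing union of $n$-geometric open substacks, each of finite presentation.

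\textbf{Truncatedness.} Next, check $1$-truncatedness on discrete $A$. By \cref{tau_flat_in_heart}, a $\tau$-flat family $F\in\C_A$ lies in $\C_A^\heartsuit$. The heart of a $t$-structure on a stable $\infty$-category is an ordinary $1$-category, because $\pi_i\Map(F,G)=\Ext^{-i}(F,G)=0$ for $i>0$ when $F,G$ are in the heart. So the maximal subgroupoid of flat families is a $1$-groupoid, and $\M_\C^{[0,0]}(\Spec A)$ is $1$-truncated for all discrete $A$.

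\textbf{Conclusion.} Applying \cite[Lemma 2.19]{TV07}, a locally geometric, locally finitely presented derived stack that is $1$-truncated in this sense is $1$-geometric, i.e.\ a derived $1$-Artin stack. Its classical truncation $\M_\C^\heartsuit=t_0\M_\C^{[0,0]}$ is then an algebraic stack with affine-scheme-smooth atlas. It is locally of finite presentation over $k$, since finite presentation passes to $t_0$ for classical test rings.

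\textbf{Main obstacle.} The delicate point is matching hypotheses with \cite[Lemma 2.19]{TV07}. That lemma also requires geometricity at some level together with truncatedness of the diagonal, i.e.\ control of the stabilizers. Here the stabilizers are automorphism groups of flat families, and these are open in $\Hom(F,F)$. By \cref{pseudo_perf_Hom} (using that finite type implies smooth, \cref{relation_finiteness_conditions}), $\Hom(F,F)$ is perfect. So each $n$-geometric piece has diagonal represented by $\Aut$, which is $0$-truncated on discrete rings; this is what forces the level down to $1$.
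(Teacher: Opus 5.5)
Your proposal is correct and follows the paper's own argument. Openness of $\M_\C^{[0,0]}\subset\M_\C$ (\cref{moduli_flat_objects}) gives local geometricity and local finite presentation, \cref{tau_flat_in_heart} gives $1$-truncatedness on discrete rings, and \cite[Lemma 2.19]{TV07} concludes; your closing paragraph on stabilizers is unnecessary, since the needed control of the diagonal on discrete rings already follows from the values there being $1$-groupoids.
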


\begin{example}
	For an example satisfying universal openness of flatness let us refer to \cref{openness_flatness_perv}.
\end{example}

	Moduli of $\tau$-flat families have affine diagonal, as shown by the following:

\begin{lemma}[{\cite[Lemma 2.4.6]{Lam25Perv}}]\label{affine_diagonal}
	Let $\C \in \Prlo_k$ of finite type equipped with be an admissible $t$-structure $\tau$ universally satisfying openness of flatness.
	Then $\M_\C^\heartsuit$ has affine diagonal.
\end{lemma}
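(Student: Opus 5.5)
We have to show that for two $\kappa$-points of $\M_\C^\heartsuit$, the isomorphism sheaf is an affine scheme over the base. It suffices to work after base change to a discrete affine $S=\Spec(A)$. Take $F_1,F_2\in\C_A$ that are $\tau$-flat, hence pseudo-perfect, and consider the fiber product
\[
\mathrm{Isom}(F_1,F_2) \coloneqq S\times_{\M_\C^\heartsuit\times\M_\C^\heartsuit} \M_\C^\heartsuit .
\]
We must show it is representable by an affine scheme over $S$. The plan is to realize it first inside a linear scheme of homomorphisms, and then cut it out as a closed condition.

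\textbf{Step 1 (the Hom functor is affine).} By \cref{pseudo_perf_Hom}, which applies since finite type implies smooth by \cref{relation_finiteness_conditions}, the object $H\coloneqq\Hom_{\C_A}(F_1,F_2)$ is a perfect $A$-complex. Its formation commutes with base change $A\to B$, because pseudo-perfect families over $A$ are compact by \cref{pseudo_perfect_vs_compact}. Flatness gives a vanishing: for discrete $B$, the families $F_{i,B}$ lie in the heart by \cref{tau_flat_in_heart}, so negative Ext groups vanish and $H$ has Tor-amplitude in cohomological degrees $\ge0$. Hence the functor $B\mapsto \pi_0\Map(F_{1,B},F_{2,B}) = H^0(H\otimes_A B)$ on discrete $A$-algebras is represented by the affine scheme $\Spec\operatorname{Sym}_A(H^0(H^\vee))$. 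This is the standard representability of $H^0$ of a perfect complex with Tor-amplitude $[0,n]$ as an abelian cone.

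\textbf{Step 2 (Isom as a closed subscheme).} Apply Step 1 to the pairs $(F_1,F_2)$, $(F_2,F_1)$, $(F_1,F_1)$ and $(F_2,F_2)$. Then $\mathrm{Isom}(F_1,F_2)$ is the closed subscheme of $\mathcal{H}om(F_1,F_2)\times_S\mathcal{H}om(F_2,F_1)$ defined by the equations $gf=\mathrm{id}$ and $fg=\mathrm{id}$. These are equalities of sections of affine schemes over $S$, and equalizers of maps between affine schemes are closed, hence affine. We also have to check that the truncated value of $\M_\C^\heartsuit$ on discrete rings is the groupoid of flat objects in the heart with isomorphisms. This holds by the $1$-truncatedness noted before \cref{geometric_heart}, so the fiber product really is this Isom functor.

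\textbf{Main obstacle.} The delicate point is the vanishing of negative Exts needed in Step 1, together with the base-change compatibility of $H$. On a non-flat discrete $B$-module $M$ one only knows $F\otimes_A M$ lies in the heart. To handle this, I would write $H\otimes_A M\simeq \Hom(F_1,F_2\otimes_A M)$ using compactness of $F_1$. Since $F_1$ and $F_2\otimes_A M$ both lie in $\C_A^\heartsuit$, this complex is coconnective. This gives the Tor-amplitude bound, from which the representability of $H^0$ follows.
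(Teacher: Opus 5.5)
Your argument is correct: compactness of $F_1$ (from smoothness) gives base change for $H=\Hom_{\C_A}(F_1,F_2)$, and flatness makes $H\otimes_A M\simeq\Hom_{\C_A}(F_1,F_2\otimes_A M)$ coconnective for every discrete $M$, so $H$ is perfect of Tor-amplitude $[0,n]$, the classical Hom functor is the affine cone $\Spec\operatorname{Sym}_A(\pi_0(H^\vee))$, and $\mathrm{Isom}(F_1,F_2)$ is closed in $\mathcal{H}om(F_1,F_2)\times_S\mathcal{H}om(F_2,F_1)$ (the only slip is that the $F_i$ are pseudo-perfect because $\M_\C^{[0,0]}\subset\M_\C$, not because they are $\tau$-flat). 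The paper gives no proof of its own here and only cites \cite[Lemma 2.4.6]{Lam25Perv}; your route is the standard argument for this statement.
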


\subsection{Hochschild homology and global sections}\label{subsection:HO}

	We now introduce one the main tools of this paper, that is, Hochschild homology of $k$-linear stable $\infty$-categories.
	This is a generalization of Hochschild homology as classically defined in commutative algebra and scheme theory, and recovers it when one consider the stable category $\Mod_A$ for $A \in \mathrm{CAlg}_k$.
	We thank Marc Hoyois for clarification about the content of this section.\\ \indent
	Recall from \cref{finiteness_conditions} that for any $\C \in \Prlo_k$, there are duality data (unique up to contractible choices)
\[
\ev_\C \colon \C \otimes \C^\vee \leftrightarrows \Mod_k \colon \coev_\C \ ,
\]
where we freely use that Lurie's tensor product is symmetric.
	With this in hand, we can give the following

\begin{definition}
	Let $\C \in \mathrm{Pr}^{\mathrm{L}}_k$ dualizable and $(\ev_\C, \coev_\C)$ a duality datum for $\C$.
	Then the Hochschild homology of $\C$ is defined as the trace of $id_\C$, that is, it is the image of $k$ through the composition
\[
\Mod_k \xrightarrow{\coev_\C} \C \otimes \C^\vee \xrightarrow{\ev_\C} \Mod_k \ .
\]
\end{definition}

	Hochschild homology of a category $\C$ provides a way to construct global section of the structure sheaf of $\M_\C$.
\begin{lemma}\label{lemma:HO}
	Let $\C \in \Prlo_k$ be of finite type.
	There is a canonical map
\[
\HO_\C \colon \HH(\C) \to \mathrm{R}\Gamma (\M_\C, \OO_{\M_\C}) \ .
\]
\end{lemma}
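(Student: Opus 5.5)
The plan is to build $\HO_\C$ from the universal pseudo-perfect family on $\M_\C$ together with functoriality of traces in $\Pr^{\mathrm{L}}_k$. Write $\M_\C \simeq \operatorname{colim}_{\Spec(A)\to\M_\C}\Spec(A)$, so that
\[
\mathrm{R}\Gamma(\M_\C,\OO_{\M_\C}) \simeq \lim_{\Spec(A)\to\M_\C} A .
\]
It is therefore enough to give maps $\HH(\C)\to A$ for every point $x\colon\Spec(A)\to\M_\C$, natural in $x$.

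Such a point is a pseudo-perfect family $M_x\in\C_A$. By \cref{tensor_product} it is an exact functor
\[
M_x\colon(\C^\omega)^{op}\to\Perf(A).
\]
Extending it by Ind-completion and $A$-linearly gives a colimit-preserving functor $\C^\vee\otimes_k\Mod_A\to\Mod_A$ in $\Pr^{\mathrm{L}}_A$. It preserves compact objects, because pseudo-perfectness means compacts go to $\Perf(A)$. Hence its right adjoint is colimit preserving and $A$-linear, so the functor is a morphism between dualizable $A$-linear categories with a colimit-preserving right adjoint.

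Functoriality of Hochschild homology (traces) for such morphisms then gives a map
\[
\HH(\C^\vee)\otimes_k A\simeq \HH_A(\C^\vee_A)\to\HH_A(\Mod_A)=A .
\]
We also have $\HH(\C^\vee)\simeq\HH(\C)$, since the trace of the identity on a dualizable object and on its dual agree by symmetry of the tensor product. Precomposing with the unit $\HH(\C)\to\HH(\C)\otimes_k A$ yields the desired map $\HO_x\colon\HH(\C)\to A$. Concretely, $\HO_x$ sends a class to the trace of the induced endomorphism of the perfect complex $M_x(E)$, for $E$ a generator.

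For naturality, take $f\colon\Spec(B)\to\Spec(A)$. The pullback $f^\ast_\C M_x$ corresponds to postcomposing with $-\otimes_A B$, by \cref{functoriality_tens_prod}. Trace functoriality is compatible with base change along $A\to B$, because $\HH$ is symmetric monoidal and base change $\Pr^{\mathrm{L}}_A\to\Pr^{\mathrm{L}}_B$ is symmetric monoidal and preserves right adjoints in $\Pr^{\mathrm{L}}$. This gives a homotopy $\HO_{f^\ast x}\simeq (A\to B)\circ \HO_x$.

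The main obstacle is upgrading these homotopies to a coherent natural transformation. The cleanest route is to package everything as a single functor
\[
\dAff_{/\M_\C}^{op}\to(\Pr^{\mathrm{L}}_k)^{\mathrm{dual}}\text{-morphisms to } \Mod_{(-)},
\]
and then apply the $(\infty,2)$-functoriality of traces (Hoyois–Scherotzke–Sibilla) to the symmetric monoidal $2$-category $\mathrm{Mor}$ of dualizable presentable categories with right-adjointable morphisms. Equivalently, one can view the universal family as a functor $\C^\vee\to\QCoh(\M_\C)$ preserving compacts. This requires $\M_\C$ to be locally geometric (\cref{repr_moduli_of_objects}), and is where finite type is used; we then take $\HH$, using $\HH(\QCoh(\M_\C))\to\mathrm{R}\Gamma(\M_\C,\OO)$ via restriction to affines. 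If coherence becomes difficult, I would fall back on these HSS-type functoriality results as the key input.
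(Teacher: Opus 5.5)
Your proposal is correct and is essentially the paper's argument: the paper likewise obtains $\HO_\C$ from the universal pseudo-perfect family. Equivalently, following Bozec--Calaque--Scherotzke, it writes $\HO_\C$ as the composite $\HH(\C)\simeq\HH(\mathrm{Ind}((\C^\omega)^{op}))\to\lim_{x}\HH(B)\to\lim_{x}B\simeq\mathrm{R}\Gamma(\M_\C,\OO_{\M_\C})$ over points $x\colon\Spec(B)\to\M_\C$, i.e.\ as a limit of pointwise traces.

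The only real difference is that the paper stays $k$-linear: it applies the functor $\HH$ to the tautological cone of compact-preserving functors $(\C^\omega)^{op}\to\Perf(B)$ and then the natural trace $\HH(B)\to B$ for commutative $B$. This makes the coherence you worry about automatic, without base change to $\Pr^{\mathrm{L}}_A$ or $(\infty,2)$-functoriality of traces. Note also that in your alternative the paper uses $\mathrm{Ind}(\Perf(\M_\C))$ rather than $\QCoh(\M_\C)$, because on the non-quasi-compact stack $\M_\C$ the extension $\C^\vee\to\QCoh(\M_\C)$ need not preserve compact objects, and local geometricity does not fix this.
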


\begin{proof}
	This is well known \cite{BD21, BCS24b, CL26b}, but we include few details for the sake of completeness.
	Since $\C$ is of finite type, we obtain a universal family
\[
F_{univ} \colon (\C^\omega)^{op} \to \Perf(\M_\C)
\]
corresponding to $id_{\M_C}$.
	Since \cref{finiteness_conditions} gives us that $\mathrm{Ind}((\C^\omega)^{op}) \simeq \C^\vee$, we get a natural morphism
\[
\HH(\C) \simeq \HH(\C^\vee) \to \HH(\mathrm{Ind}(\Perf(\M_C))) \ .
\]
	Since the trace on simplicial commutative algebras define a natural transformation $tr \colon \HH(-) \to \OO$ from Hochschild homology to global sections, we obtain the desired map as the composition
\[
\HO_\C \colon \HH(\C) \simeq \HH(\C^\vee) \xrightarrow{F_{univ}} \HH(\mathrm{Ind}(\Perf(\M_C))) \xrightarrow{tr} \mathrm{R}\Gamma(\M_\C, \OO_{\M_\C}) \ .
\]
	Following \cite{BCS24b}, the above map can also be obtained as the composition
\[
\HH(\C) \simeq \HH(\mathrm{Ind}((\C^\omega)^{op})) \to \lim_{\C^\omega \to \Perf(B)} \HH(B) \to \lim_{\C^\omega \to \Perf(B)} B \simeq \mathrm{R}\Gamma(\M_\C, \OO_{\M_\C}) \ ,
\]
where $\C^\omega \to \Perf(B)$ corresponds to a point $\Spec(B) \to \M_\C$ by definition of $\M_\C$.
\end{proof}

\begin{remark}
	Following \cite[Section 4.1]{CL26b}, we have chosen the notation $\HO$ in \cref{lemma:HO} as a mnemonic for "from Hochschild to $\OO$".
\end{remark}

\begin{remark}
	Notice that in the references \cite{BD21, BCS24b}, in the morphism of \cref{lemma:HO} there is no appearance of $\mathrm{Ind}((\C^\omega)^{op}) \simeq \C^\vee$ (\cref{finiteness_conditions}), but only of $\C \simeq \mathrm{Ind}(C^\omega)$.
	This is because they definition of moduli of objects differs from the original one \cite{TV07}, which is the one adopted in the present paper as well as in \cite{CL26b, Lam25Perv}.
	The only difference is that in the aforementioned references, their notion of moduli of objects of $\C$ is the the moduli of objects of $\C^\vee$.
	This doesn't affect the theory as $\HH(\C) \simeq \HH(\C^\vee)$ canonically.
\end{remark}

\begin{remark}
	The formalism of traces \cite{HSS17} allows one to define an action of the circle $S^1$ on Hochschild homology.
	This action plays a crucial in relating Hochschild homology to differential forms on moduli of objects as shown by Brav--Dyckerhoff \cite{BD21}, enhancing the content of \cref{lemma:HO}.
	We do not develop further on this subject, as we shall not need it for our purposes.
\end{remark}

\begin{recollection}\label{recollection:traces}
    Let $B$ be a simplicial commutative $k$-algebra.
    By \cite[Section 4.5]{HSS17} and the equivalence $\Mod_B \simeq \mathrm{Ind}\Perf(B)$, we have an explicit model for $\HH(\Mod_B)$, whose degree $n$-th component is given by
\[
\bigoplus_{M_0, \ldots, M_n \in \Perf(B)} \! \! \! \! \! \! \! \! \! \! \! \Hom_B(M_0, M_1) \otimes_k \cdots \otimes_k \Hom_B(M_{n-1}, M_n) \otimes_k \Hom_B(M_n, M_0) \ ,
\]
and the degeneracy map are the standard ones appearing in the bar complex.
    Since $B$ is a compact generator of $\Mod_B$ and Hochschild homology is Morita invariant, the inclusion of the Bar complex of $B$
\[
i_B \colon \HH(B) \to \HH(\Mod_B) \ ,
\]
sending $B^{\otimes_k (n+1)}$ to the summand of $\HH(\Mod_B)$ given by $B = M_0= \cdots = M_n$, is an equivalence.
    More generally, given a perfect complex $P$, we have that the diagonal inclusion of the full subcategory spanned by $P$
\[
\Mod_{\End_B(P)} \simeq \langle P \rangle \subset \Mod_B \  
\]
induces a "diagonal" inclusion $\HH(\End(P)) \to \HH(\Mod_B)$, which needs not to be an equivalence (unless $P$ is a compact generator).
    Following \cite[Definition 9.5.7 \& Corollary 9.5.8]{Wei94}, for $n \in \N$, consider the morphism $tr^{\otimes n}$
\[
\bigoplus_{M_0, \ldots, M_n \in \Perf(B)} \! \! \! \! \! \! \! \! \! \! \! \Hom_A(M_0, M_1) \otimes_k \cdots \otimes_k \Hom_B(M_{n-1}, M_n) \otimes_k \Hom_B(M_n, M_0) \to B^{\otimes_k (n+1)}
\]
defined on an element $f_0 \otimes_k \cdots \otimes_k f_n$ as the image of $1_B^{\otimes (n+1)} \in B^{\otimes_k (n+1)}$ under the composition
\[
\begin{tikzcd}[sep = scriptsize]
	{B^{\otimes_k (n+1)}} && {(M_0 \otimes_B M_0^\vee) \otimes_k \cdots \otimes_k (M_n \otimes_B M_n^\vee)} && \\
	&& {(M_1 \otimes_B M_0^\vee ) \otimes_k \cdots \otimes_k(M_0 \otimes_B M_n^\vee) } \\
	&& {(M_0 \otimes_B M_0^\vee) \otimes_k \cdots \otimes_k (M_n \otimes_B M_n^\vee)} && {B^{\otimes_k (n+1)} \ .}
	\arrow["{\otimes_k\coev_{M_i}}", from=1-1, to=1-3]
	\arrow["{(f_0 \otimes_B id) \otimes_k \cdots \otimes_k (f_n \otimes_B id)}", from=1-3, to=2-3]
	\arrow["\simeq"{marking, allow upside down}, draw=none, from=2-3, to=3-3]
	\arrow["{\otimes_i \ev_{M_i}}", from=3-3, to=3-5]
\end{tikzcd}
\]
Using that the composition morphism $\Hom_B(M_0, M_1) \otimes_k \Hom_B(M_1, M_2) \to \Hom_B(M_0, M_2)$ can be identified with the morphism 
\[
\begin{tikzcd}[column sep=tiny,row sep=small]
	{(M_0^\vee \otimes_B M_1) \otimes_k (M_1^\vee \otimes_B M_2)} && {(M_1^\vee \otimes_B M_1) \otimes_k (M_0^\vee \otimes_B M_2) } && \\
	\\
	&& { B \otimes_k (M_0^\vee \otimes_A M_2)} && {M_0^\vee \otimes_B M_2,}
	\arrow["\sim", from=1-1, to=1-3]
	\arrow["{\ev_{M_1} \otimes_k id}", from=1-3, to=3-3]
	\arrow["m", from=3-3, to=3-5]
\end{tikzcd}
\]
one checks that the morphisms $tr^{\otimes n}$ assemble to a well defined map
\[
tr \colon \HH(\Mod_B) \to \HH(B) \ .
\]
    Since clearly $tr \circ \HH(i) \simeq id_{\HH(B)}$ and since $\HH(i)$ is an equivalence, it follows that $tr$ is an equivalence too, inverse of $\HH(i)$.
\end{recollection}

    We extract the following definition from \cref{recollection:traces}:

\begin{definition}\label{definition:trace}
    Let $B$ be simplicial commutative algebra and let $M \in \Perf(B)$.
    Given $f \colon M \to M$, we define the trace of $f$, denoted $tr(f)$, as the image of the unit of $B$ under the map
\[
B \to M \otimes_B M^\vee \xrightarrow{f \otimes id} M \otimes_B M^\vee \to B \ ,
\]
where the first and third map are respectively the coevaluation and evaluation of a duality datum for $M$.
\end{definition}

\begin{notation}\label{notation:pseudo_perfect}
	Let $\C \in \Prlo_k$ of finite type (hence smooth by \cref{finiteness_conditions}) and write $\C \simeq \mathrm{LMod}_A$, $A \in \mathrm{Alg}_k$ smooth (\cref{lemma_smoothness}). 
    Let $B \in \mathrm{CAlg}_k$ be a commutative $k$-algebra and $F \in (\LMod_A)_B \simeq \LMod_{A \otimes_k B}$ pseudo-perfect.
\[
p_\ast \colon \C \otimes_k B \simeq \LMod_{B \otimes_k A} \simeq \Mod_B(\LMod_A) \to \Mod_B \ .
\]
	By definition of pseudo-perfect object, we have that $p_\ast F \in \Perf(B)$.
    Moreover, the object $F \in \LMod_{A \otimes_k B}$ is classified by a $k$-linear map
\[
A \to \End_B(p_\ast M) \ .
\]
\end{notation}

	The following result is a mild generalization of Casals-Li's \cite[Proposition 4.5]{CL26b}.

\begin{proposition}\label{proposition:HO_description}
	In the setting of \cref{notation:pseudo_perfect}, let $x \colon \Spec(B) \to \M_\C$ be a point corresponding to a pseudo-perfect left $A \otimes_k B$-module $(p_\ast F, \rho \colon A \to \End_B(p_\ast F))$ as in \cref{notation:pseudo_perfect}.
	Then for $r \in A$, we have that its image on $B$ under the composite 
\[
\HO_\C \colon \HH(\C) \simeq \HH(A) \to \mathrm{R}\Gamma(\M_\C,\OO_{M_C}) \xrightarrow{x^\ast} B
\]
is given by $x^\ast \HO(r)(x) = tr(\rho(r))$.
\end{proposition}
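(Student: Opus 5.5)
The plan is to reduce everything to the naturality of the construction in \cref{lemma:HO} along the point $x$, and then to compute explicitly on the degree-zero part of the bar complex using \cref{recollection:traces}.

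First, by the second description of $\HO_\C$ in the proof of \cref{lemma:HO}, the composite $x^\ast \circ \HO_\C$ equals the projection from the limit $\lim_{\C^\omega \to \Perf(B')} B'$ onto the component indexed by $x$. That is, it factors as
\[
\HH(\C) \simeq \HH(\mathrm{Ind}((\C^\omega)^{op})) \xrightarrow{\HH(x)} \HH(\Mod_B) \xrightarrow{tr} \HH(B) \to B \ .
\]
The last map is the augmentation, i.e.\ the projection to degree zero followed by multiplication, whose $\pi_0$ identifies $\HH_0(B) = B$ for commutative $B$. Thus the statement becomes a computation of this concrete composite on the class of $r \in A = \HH(A)_0$.

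Second, I identify $\HH(x)$ on the image of $A$. Write $\C \simeq \LMod_A$ with compact generator $A$, so Morita invariance gives $\HH(\C) \simeq \HH(A)$ via the diagonal inclusion of \cref{recollection:traces} for the object $A$. The functor classified by $x$ sends the compact generator to the perfect $B$-module $M \coloneqq p_\ast F$, up to the duality $(\C^\omega)^{op}$. The induced map on endomorphisms is exactly $\rho \colon A \to \End_B(M)$; here I use \cref{remark:functoriality_moduli_of_objects} together with the description in \cref{notation:pseudo_perfect}. By functoriality of the diagonal inclusions, $\HH(x)$ restricted to $\HH(A)$ factors as
\[
\HH(A) \xrightarrow{\HH(\rho)} \HH(\End_B(M)) \to \HH(\Mod_B) \ .
\]
In simplicial degree zero this sends $r$ to $\rho(r)$, placed in the summand $\Hom_B(M,M)$ with $M_0 = M$.

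Third, I apply $tr$ from \cref{recollection:traces}. In degree zero $tr^{\otimes 0}$ sends $f_0 \in \Hom_B(M,M)$ to the image of $1_B$ under $\coev_M$, then $f_0 \otimes id$, then $\ev_M$. This is exactly $tr(f_0)$ in the sense of \cref{definition:trace}, so $r \mapsto tr(\rho(r))$, as claimed.

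I expect the main obstacle to be the bookkeeping of the dualities. Our convention for $\M_\C$ involves $\C^\vee \simeq \mathrm{Ind}((\C^\omega)^{op})$ and the equivalence $\HH(\C) \simeq \HH(\C^\vee)$, and possibly $A^{rev}$ (\cref{rem_smooth}). One must check that, after these identifications, the element $r$ is sent to $\rho(r)$ and not to its transpose or a twisted version. I would handle this by noting that trace is invariant under passing to the dual endomorphism, $tr(f) = tr(f^\vee)$, and that $\HH(A) \simeq \HH(A^{rev})$ is the identity on degree-zero chains. Any such twist is therefore harmless for the final formula, and the argument follows Casals--Li's \cite[Proposition 4.5]{CL26b} verbatim.
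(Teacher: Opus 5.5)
Your proposal is correct and follows essentially the same route as the paper. Both factor $x^\ast\circ\HO_\C$ through $\HH(\Mod_A)\to\HH(\Mod_B)$, note that on the diagonal copy of $\HH(A)$ this map is $\HH(\rho)$ into $\HH(\End_B(p_\ast F))$, and then read off the degree-zero part of the trace equivalence of \cref{recollection:traces}, with your remarks on the duality twists ($\C^\vee$, $A^{rev}$, $tr(f)=tr(f^\vee)$) spelling out a point the paper leaves implicit.
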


\begin{remark}
    In \cref{proposition:HO_description}, we consider $r \in A$ as an element in $\HH(A)$.
    We will be interested into the non-derived global sections of $\OO_{\M_\C}$.
    If $A$ is connective, which will be the case in the applications of this paper, \cref{proposition:HO_description} implies the commutativity of the following diagram:
\[\begin{tikzcd}[sep=small]
	A && {\End_{\OO_{\M_{\C}}}({p_* F}_{univ})} \\
	\\
	{\HH_0(A) \simeq \pi_0(A)/[\pi_0(A),\pi_0(A)]} && {\Gamma(\M_{\C}, \OO_{\M_{\C}})}
	\arrow["{\rho_{univ}}", from=1-1, to=1-3]
	\arrow[two heads, from=1-1, to=3-1]
	\arrow["tr", from=1-3, to=3-3]
	\arrow[from=3-1, to=3-3]
\end{tikzcd}\]
where $(p_\ast F_{univ}, \rho_{univ})$ is the universal $A \otimes_k \OO_{\M_\C}$ pseudo-perfect complex, i.e., the universal family over $\M_\C$.
    The aforementioned commutativity is enough for all our purposes.
\end{remark}

\begin{proof}
    Consider the forgetful functor 
\[
p_\ast \colon \C \otimes_k B \simeq \LMod_{B \otimes_k A} \simeq \Mod_B(\LMod_A) \to \Mod_B \ .
\]
    By definition of $\M_\C$, we have that a point $x \colon \Spec(B) \to \M_\C$ corresponds to an object $F \in \C \otimes_k B$ such that $p_\ast F \in \Perf(B)$.
    We have that $F$ corresponds to the datum of a $k$-linear map
\[
\rho \colon A \to \End_B(p_\ast F) \ .
\]
    Under the equivalence $\LMod_{B \otimes_k A} \simeq \Fun^{ex}_k((\LMod_A^\omega)^{op}, \Mod_B)$, the object $F$ corresponds to the functor
\begin{align*}
    h_F \colon (\LMod&_A^\omega)^{op} \to \Mod_B\\
    & M \mapsto \Hom_{A \otimes_k B}(M\otimes_k B, F) \ .
\end{align*}
    In particular, we get that $h_F(A) \simeq \Hom_{B}(B, p_\ast F) \simeq p_\ast F$.
    Hence we have a commutative diagram
\[
\begin{tikzcd}[sep=small]
	{(\LMod_A^\omega)^{op}} && {\Mod_B} \\
	\\
	{\langle A \rangle} && {\langle p_\ast F \rangle}
	\arrow["{h_F}", from=1-1, to=1-3]
	\arrow[hook, from=3-1, to=1-1]
	\arrow["{\langle \rho \rangle}"', from=3-1, to=3-3]
	\arrow[hook, from=3-3, to=1-3]
\end{tikzcd}
\]
where the vertical arrows are the inclusion of the full subcategory spanned respectively by $A \in \Mod_A$ and $p_\ast F \in \Mod_B$, and the morphism $\langle \rho \rangle$ is the one determined by $\rho$ (it sends $A$ to $p_\ast F$ and acts as $\rho$ on $A \simeq \End_A(A) \to \End_B(p_\ast F)$.
    Since $p_\ast F \in \Perf(B)$, we get that $\End_B(p_\ast F) \in \Perf(B)$.
    Hence by \cref{recollection:traces} we have a commutative diagram
\[
\begin{tikzcd}[sep=small]
	{\HH(\Mod_A) \simeq \HH(\LMod_A^\omega)^{op})} && {\HH(\Mod_B)} && {HH(B)} \\
	\\
	{\HH(A)} && {\HH({\End_B(p_{\ast} F)})}
	\arrow["{\HH(h_F)}", from=1-1, to=1-3]
	\arrow["tr", from=1-3, to=1-5]
	\arrow[from=3-1, to=1-1]
	\arrow["{\HH(\rho)}"', from=3-1, to=3-3]
	\arrow[from=3-3, to=1-3]
\end{tikzcd}
\]
    where the left vertical morphism is an equivalence.
    The statement follows by the explicit description of the equivalence given by the trace $tr$ appearing in \cref{recollection:traces}.
\end{proof}

	We now want to understand the behavior of $\HO \colon \HH_0(\C) \to H^0\mathrm{R}\Gamma(\M_\C,\OO_{M_C})$ under récollements.
	This will be useful for us, as categories of constructible sheaves arise in this way.
	We mainly follow \cite{BH26}, where the results treated here are proven in greater generality.
	
\begin{definition}[{\cite[Definition A.8.1]{Lur17}}]\label{def:recollements}
	Let $\C$ be an $\infty$-category with finite limits.
	We say that functors
\[
i^\ast \colon \C \to \Z \qquad \text{and} \qquad j^\ast \colon \C \to \U
\]
exhibit $\C$ as the récollement of $\Z$ and $\U$ if the following conditions hold:
\begin{itemize}\itemsep=0.2cm
	\item The functors $i^\ast$ and $j^\ast$ admit fully faithful right adjoints $i_\ast$ and $j_\ast$, respectively.
	\item The functors $i^\ast \colon \C \to \Z$ and $j^\ast \colon \C \to \U$ are left exact and jointly conservative.
	\item The functor $j^\ast i_\ast \colon \Z \to \U$ is constant with value the terminal object of $\U$.
\end{itemize}
	We also simply say that $(i^\ast \colon \C \to \Z, j^\ast \colon \C \to \U)$ is a recollement to mean that $\C$ admits finite limits and $i^\ast$ and $j^\ast$ exhibit $\C$ as a recollement of $\U$ and $\Z$.
\end{definition}

\begin{remark}\label{remark:special_adjoints}
	In the setting of \cref{def:recollements}, if $\C$ is stable (respectively, presentable), then $\U$ and $\Z$ are also stable (respectively, presentable) by\cite[Notation 1.11 \& Corollary 1.16]{BH26}.
	Moreover by \cite[Remark A.8.19]{Lur17}, in this case $i_\ast$ has a right adjoint $i^!$ and $j^\ast$ has a left adjoint $j_\#$.
	Explicitly, we have
\[
i^! \simeq \fib(i^\ast \to i^\ast j_\ast j^\ast), \qquad j_\# \simeq \cof(j^\ast \to i_\ast i^\ast j^\ast) \ .
\]
\end{remark}

\begin{definition}
    Let $(i^\ast \colon \C \to \Z, j^\ast \colon \C \to \U)$ be a récollement.
    We say that it is a récollement in $\Pr_k^{\mathrm{L}}$ if $\C, \Z, \U \in \Pr_k^{\mathrm{L}}$ and each functor appearing in \cref{def:recollements} is in $\Pr_k^{\mathrm{L}}$.
\end{definition}

	\cref{remark:special_adjoints} shows that there are many adjoints functors when dealing with récollements of stable $\infty$-category.
	This allows us to prove the following
	
\begin{lemma}\label{lemma:finite_type_récollement}
	Let $(i^\ast \colon \C \to \Z, j^\ast \colon \C \to \U)$ be a récollement in $\Pr_k^{\mathrm{L}}$. 
\begin{enumerate}\itemsep=0.2cm
	\item If $\C$ is dualizable, then $\Z, \U \in \Pr_k^{\mathrm{L}}$ are dualizable.
	\item if $\C, \Z, \U \in \Prlo_k$ and $\C$ is of finite type, then $\U, \Z \in \Prlo_k$ are of finite type.
\end{enumerate}
\end{lemma}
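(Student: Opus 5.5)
The plan is to exhibit $\Z$ and $\U$ as retracts of $\C$ in $\Pr_k^{\mathrm{L}}$. Both dualizability and compactness in $\Prlo_k$ are stable under retracts, so both parts then follow. By \cref{remark:special_adjoints}, in the stable presentable setting we have the adjoint chains $i^\ast \dashv i_\ast \dashv i^!$ and $j_\# \dashv j^\ast \dashv j_\ast$.

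For $\Z$, the functor $i_\ast$ is a left adjoint (its right adjoint is $i^!$), so it is a morphism in $\Pr_k^{\mathrm{L}}$. It is $k$-linear, being right adjoint to a $k$-linear functor. Since $i_\ast$ is fully faithful, the counit gives $i^\ast i_\ast \simeq \mathrm{id}_\Z$, so $\Z$ is a retract of $\C$ via $i_\ast$ and $i^\ast$. For $\U$, use $j_\# \dashv j^\ast$. The functor $j_\#$ is fully faithful because $j_\ast$ is: the unit $\mathrm{id} \to j^\ast j_\#$ is the mate of the counit $j^\ast j_\ast \to \mathrm{id}$. Hence $j^\ast j_\# \simeq \mathrm{id}_\U$, and $\U$ is a retract of $\C$ via $j_\#$ and $j^\ast$, both left adjoints.

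For (1), a retract of a dualizable object in a symmetric monoidal $\infty$-category is dualizable. Concretely, the dual of $\Z$ is the retract of $\C^\vee$ cut out by the idempotent $(i_\ast i^\ast)^\vee$, which exists since $\Pr_k^{\mathrm{L}}$ is idempotent complete. One could also cite Lurie's characterization that dualizable objects are the retracts of compactly generated categories in $\Pr_k^{\mathrm{L}}$. This applies directly, because $\C$ is dualizable and therefore itself such a retract.

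For (2), we must check that the retraction maps lie in $\Prlo_k$, i.e. that they preserve compact objects. The functors $i^\ast$ and $j_\#$ have colimit-preserving right adjoints ($i_\ast$ and $j^\ast$), so they preserve compacts. The functors $i_\ast$ and $j^\ast$ preserve compacts if and only if $i^!$ and $j_\ast$ commute with filtered colimits. This is the main obstacle. The hypothesis that $\Z, \U \in \Prlo_k$ and that all récollement functors are in $\Pr_k^{\mathrm{L}}$ gives colimit preservation of $i_\ast$ and $j^\ast$, but not of $i^!$ or $j_\ast$.

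To get around this, I would not need the $\Prlo_k$ retraction. It suffices that $i^\ast\colon \C \to \Z$ and $j^\ast \circ$ (something) exhibit $\Z$ and $\U$ as idempotent retracts in $\Prlo_k$. Alternatively, I would use that a compactly generated $\D$ is compact in $\Prlo_k$ if and only if $\D^\omega$ is a compact idempotent-complete small category, i.e. a retract of a finite cell category. Via $i^\ast$, $\Z^\omega$ is the idempotent completion of the Verdier quotient $\C^\omega / \ker(i^\ast)^\omega$, and the kernel $\ker(i^\ast) = j_\#\U$ is generated by $j_\#$ of compacts of $\U$. Similarly, $\U^\omega$ is the idempotent completion of $\C^\omega/(i_\ast\Z)^\omega$, using that $j^\ast$ is a localization. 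If the kernels are generated by finitely many compacts, quotients of finite type categories remain finite type, since a cofiber of a map from a finitely generated free object preserves compactness. Smoothness, via \cref{relation_finiteness_conditions}, may be needed to reduce to a single compact generator of the kernel.

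I expect verifying this finite generation of the kernel, or alternatively showing that $j_\ast$ and $i^!$ preserve filtered colimits under the assumptions, to be the delicate step. I would try first the Verdier quotient route, with a compact generator obtained from $j_\#(E_\U)$, where $E_\U$ is a compact generator of $\U$ given by smoothness of the retract.
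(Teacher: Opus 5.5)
Your part (1) is fine. Using $(j_\#, j^\ast)$ instead of the paper's $(j_\ast, j^\ast)$ to exhibit $\U$ as a retract is harmless.

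The gap is in part (2), and it comes from misreading the hypothesis. A r\'ecollement in $\Pr_k^{\mathrm{L}}$ requires \emph{every} functor in the definition of a r\'ecollement to lie in $\Pr_k^{\mathrm{L}}$. That list contains $j_\ast$, not only $i^\ast$, $j^\ast$ and $i_\ast$. So $j_\ast$ commutes with colimits, in particular filtered ones, and therefore $j^\ast$ preserves compact objects. Then $i^! \simeq \fib(i^\ast \to i^\ast j_\ast j^\ast)$ from \cref{remark:special_adjoints} is a fiber of exact functors commuting with filtered colimits. Hence $i^!$ commutes with filtered colimits, and $i_\ast$ preserves compact objects. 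It follows that $(i_\ast, i^\ast)$ and $(j_\#, j^\ast)$ are retraction data in $\Prlo_k$, and compactness in $\Prlo_k$ passes to retracts. This is the paper's proof. The step you flag as the main obstacle is a two-line consequence of the hypothesis you set aside.

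Your proposed detour does not close the gap, and it cannot without this input. Describing $\U^\omega$ as the idempotent completion of $\C^\omega/(i_\ast\Z)^\omega$ requires $i_\ast\Z$ to be generated by objects compact in $\C$. That is exactly the statement that $i_\ast$ preserves compacts, which is the point at issue. Likewise, smoothness is not inherited by retracts in $\Pr_k^{\mathrm{L}}$ alone, so you cannot obtain a compact generator of $\U$ that way.

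In fact (2) fails if $j_\ast$ is not assumed colimit-preserving. Take $k=\mathbb{Z}$, $\C=\Mod_{\mathbb{Z}}$, and $\Z=\Mod_{\mathbb{Z}[1/p]}$ with $i^\ast=-\otimes\mathbb{Z}[1/p]$. Let $\U$ be the $p$-complete modules, with $j^\ast$ the $p$-completion. All three categories are compactly generated ($\U$ by $\mathbb{Z}/p$), and $i^\ast, i_\ast, j^\ast$ commute with colimits. But $\U$ is equivalent to $p$-power torsion modules, so it is proper with $\HH(\U)\simeq\fib(\mathbb{Z}\to\mathbb{Z}[1/p])$, which is not perfect over $\mathbb{Z}$. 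Since smooth and proper would force $\HH$ to be perfect, $\U$ is not smooth and hence not of finite type. This holds even though $\U$ is a retract of $\C$ in $\Pr_k^{\mathrm{L}}$ via $(j_\#, j^\ast)$.
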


\begin{proof}
	As shown in \cite[Corollary 1.16]{BH26} and its proof, the functors $i_\ast, i^\ast$, $j_\ast$ and $j^\ast$ exhibit $\Z$ and $\U$ as retracts of $\C$ in $\Pr_k^{\mathrm{L}}$.
	Item $(1)$ follows.\\
	If $j_\ast$ admits a right adjoint, then it commutes with colimits and moreover $j^\ast$ preserve compact objects.
	Since $i^! \simeq \fib(i^\ast \to i^\ast j_\ast j^\ast)$ by \cref{remark:special_adjoints}, we have that $i^!$ commutes with filtered colimits.
	It follows $i_\ast$ preserve compact objects.
	Hence the functors $i_\ast, i^\ast$, $j_\#$ and $j^\ast$ exhibit $\Z$ and $\U$ as retracts of $\C$ in $\Prlo_k$.
	Item $(2)$ follows.
\end{proof}

\begin{lemma}\label{lemma:splitting_HH}
	Let $(i^\ast \colon \C \to \Z, j^\ast \colon \C \to \U)$ be a récollement $\Pr_k^{\mathrm{L}}$.
	Then the morphisms in $\Mod_k$
\[
\HH(i^\ast) \oplus \HH(j^\ast) \colon \HH(\C) \to \HH(\Z) \oplus \HH(\U)
\]
\[
\HH(i_\#) \oplus \HH(j_\#) \colon \HH(\Z) \oplus \HH(\U) \to \HH(\C)
\]
are inverse equivalences.
\end{lemma}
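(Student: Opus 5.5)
Note first that $i_\#$ in the statement has to be read as $i_\ast$: for a récollement $i_\ast$ is itself a left adjoint, with right adjoint $i^!$ (\cref{remark:special_adjoints}), so it is a morphism in $\Pr_k^{\mathrm{L}}$ and induces a map on Hochschild homology. The plan is to use additivity of $\HH$ with respect to split Verdier sequences, i.e. localization sequences of dualizable categories.

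By \cref{lemma:finite_type_récollement}-(1), $\Z$ and $\U$ are dualizable, so all three Hochschild homologies are defined. Moreover $\HH(-)$ is functorial for functors in $\Pr_k^{\mathrm{L}}$ whose right adjoints are also in $\Pr_k^{\mathrm{L}}$, by the trace formalism of \cite{HSS17}. This covers all the functors in sight:
\begin{itemize}
\item $i^\ast$ and $j^\ast$ have colimit-preserving right adjoints $i_\ast$ and $j_\ast$;
\item $i_\ast$ has right adjoint $i^!$, and $j_\#$ has right adjoint $j^\ast$.
\end{itemize}

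We then compute the two composites in turn.

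\textbf{Composite $\HH(\Z)\oplus\HH(\U)\to\HH(\C)\to\HH(\Z)\oplus\HH(\U)$.} Functoriality of traces gives the following entries.
\begin{itemize}
\item $\HH(i^\ast)\HH(i_\ast)=\HH(i^\ast i_\ast)=\mathrm{id}$, since $i_\ast$ is fully faithful.
\item $\HH(j^\ast)\HH(j_\#)=\mathrm{id}$, since $j_\#$ is fully faithful (being left adjoint to $j^\ast$, which has a fully faithful right adjoint).
\item $\HH(j^\ast)\HH(i_\ast)=\HH(j^\ast i_\ast)=0$, since $j^\ast i_\ast\simeq 0$.
\item $\HH(i^\ast)\HH(j_\#)=\HH(i^\ast j_\#)=0$, since $i^\ast j_\#=\cof(i^\ast j_\ast j^\ast\to i^\ast i_\ast i^\ast j_\ast j^\ast)$ vanishes, using $i^\ast i_\ast\simeq\mathrm{id}$ and \cref{remark:special_adjoints}.
\end{itemize}
So this composite is the identity matrix.

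\textbf{Composite $\HH(\C)\to\HH(\Z)\oplus\HH(\U)\to\HH(\C)$.} It equals $\HH(i_\ast i^\ast)+\HH(j_\# j^\ast)$. For every object of $\C$ the récollement gives a natural cofiber sequence
\[
j_\# j^\ast \to \mathrm{id}_\C \to i_\ast i^\ast ,
\]
which is a cofiber sequence of colimit-preserving endofunctors whose right adjoints also preserve colimits. The key input is additivity of traces: for a cofiber sequence $F\to G\to H$ in $\Fun^{\mathrm{L}}_k(\C,\C)$ of such functors, one has $\mathrm{tr}(G)\simeq\mathrm{tr}(F)+\mathrm{tr}(H)$ on $\HH(\C)=\mathrm{tr}(\mathrm{id}_\C)$. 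This holds because the trace $\Fun^{\mathrm{L}}_k(\C,\C)\simeq\C\otimes\C^\vee\xrightarrow{\ev}\Mod_k$ is an exact functor, and the induced map $\HH(F)$ is obtained from $F$ viewed as an object of $\C\otimes\C^\vee$ together with the unit map $\mathrm{id}\to F^R F$. With additivity in hand, $\HH(i_\ast i^\ast)+\HH(j_\# j^\ast)=\HH(\mathrm{id})=\mathrm{id}$. Alternatively, one can invoke the localization theorem of \cite{BH26} for $\HH$ on split Verdier sequences $\Z\to\C\to\U$, together with a splitting given by $j_\#$.

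\textbf{Main obstacle.} The hard step is justifying this additivity rigorously: the map $\HH(F)\colon\HH(\C)\to\HH(\C)$ is additive in $F$, whereas $\mathrm{tr}(F)$ is an object. I would handle it by presenting, for a right-adjointable $F$, the induced map $\HH(F)$ as a pairing $\mathrm{tr}(\mathrm{id})\to\mathrm{tr}(F)$-type construction exact in $F$ (as in \cite{HSS17}). Failing that, I would simply cite \cite{BH26}, where exactly this splitting is proven in greater generality.
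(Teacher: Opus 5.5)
Your argument is correct for the statement you prove, and it takes a somewhat different route from the paper. The paper cites \cite[Lemma 1.17]{BH26} for the fact that $\HH(i^\ast)\oplus\HH(j^\ast)$ is an equivalence. It then only checks that $(\HH(i^\ast)\oplus\HH(j^\ast))\circ(\HH(i_\#)\oplus\HH(j_\#))$ is the identity. You check all four entries of that composite, and you obtain the other composite from the cofiber sequence $j_\# j^\ast\to\mathrm{id}\to i_\ast i^\ast$ and additivity, so the equivalence is proved rather than quoted.

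Be aware, however, that the paper does not mean $i_\#=i_\ast$. Its proof writes $i^\ast i_\ast\simeq i^\ast i_\#\simeq\mathrm{id}$, so $i_\#$ is the left adjoint of $i^\ast$ (which exists for constructible sheaves by exodromy). The subsequent corollary needs exactly this functor, because the map $\M_\C\to\M_\Z$ given by $i^\ast$ is precomposition with its left adjoint; precomposition with $i_\ast$ gives $i^!$ instead. With that reading, the off-diagonal entry you verify, $\HH(j^\ast i_\ast)=0$, is replaced by $\HH(j^\ast i_\#)$, which need not vanish. The paper's proof looks only at diagonal entries and misses this. For example, take $\C=\Fun(\Delta^1,\Mod_k)$ (constructible sheaves on $[0,1)$ stratified by $\{0\}$). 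Then $\Z=\U=\Mod_k$ and $i_\# W=(W\xrightarrow{\mathrm{id}}W)$, hence $j^\ast i_\#\simeq\mathrm{id}$ and the composite is $\bigl(\begin{smallmatrix}1&0\\1&1\end{smallmatrix}\bigr)$ on $k^2$. So for the left adjoint only ``$\HH(i_\#)\oplus\HH(j_\#)$ is an equivalence'' survives, since the composite is unitriangular; that is all the later results use. Your reading is the one under which ``inverse equivalences'' is true.

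Two small repairs are needed. First, your formula for $i^\ast j_\#$ mixes functors on $\C$ and on $\U$; the correct formula is $j_\#\simeq\fib(j_\ast\to i_\ast i^\ast j_\ast)$, with a fiber. The vanishing is in any case immediate from adjunction: $\Map(i^\ast j_\# G,W)\simeq\Map(G,j^\ast i_\ast W)\simeq\ast$. Second, for the additivity step, exactness of $\ev$ only gives additivity of the object $\mathrm{tr}(F)$, as you note. The clean input is that $\HH$ is an additive invariant (Blumberg--Gepner--Tabuada in the compactly generated case, which covers all the categories used later). This is equivalent to $\HH(G)\simeq\HH(F)+\HH(H)$ for cofiber sequences $F\to G\to H$ of compact-preserving functors.
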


\begin{proof}
	The proof is the same as \cite[Proposition 3.2]{BH26}.
	The first map is an equivalence by \cite[Lemma 1.17]{BH26}.
	By fully faithfulness of $i_\ast$ and $j_\ast$ (hence of $i_\#$ and $j_\#$), we have $i^\ast i_\ast \simeq i^\ast i_\# \simeq id$ and $j^\ast j_\ast \simeq j^\ast j_\# \simeq id$.
	Hence the composite $(\HH(i^\ast) \oplus \HH(j^\ast)) \circ \HH(i_\#) \oplus \HH(j_\#) \simeq id_{\HH(\C)}$.
	Since the $\HH(i^\ast) \oplus \HH(j^\ast)$ is an equivalence, we deduce that $\HH(i_\#) \oplus \HH(j_\#)$ is its inverse equivalence.
\end{proof}

In the next result, we keep the notations of \cref{notation:pseudo_perfect} and \cref{proposition:HO_description}.

\begin{corollary}\label{lemma:splitting_HO_description}
	In the setting of \cref{lemma:finite_type_récollement}-$(2)$, let $R, R_U, R_Z \in \mathrm{CAlg}_k$ smooth algebras such that we have equivalences $\C \simeq \LMod_R$, $\U \simeq \LMod_{R_\U}$, $\Z \simeq \LMod_{R_\Z}$.
	Under the equivalence of \cref{lemma:splitting_HH}, the morphism $\HO \colon \HH_0(\C) \to \mathrm{H}^0\mathrm{R}\Gamma(\M_\C, \OO_{\M_\C})$ is given by
\begin{align*}
\HO \colon & \HH_0(\U) \oplus \HH_0(\Z) \to \mathrm{H}^0\mathrm{R}\Gamma(\M_\C, \OO_{\M_\C}) \\
& HO([r_\U], [r_\Z])([x]) = tr(\rho_U(r_u)) + tr(\rho(r_\Z)) \ ,
\end{align*}
where $x=(V \in \Perf_k, \rho \colon R \to \End_k(V))$ corresponds to a point $x \colon \Spec(k) \to \M_\C$, and similarly $(V_Z, \rho_Z)$, $(V_U, \rho_U)$ are the objects corresponding to the point $(i^\ast x, j^\ast x) \colon \Spec(k) \xrightarrow{x}\M_\C \xrightarrow{(i^\ast, j^\ast)} \M_\Z \times \M_\U$.
\end{corollary}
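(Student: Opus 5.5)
The plan is to identify \(\HO\) on each summand separately, using the inverse equivalence \(\HH(i_\#) \oplus \HH(j_\#)\) of \cref{lemma:splitting_HH}. An element \(([r_\U],[r_\Z]) \in \HH_0(\U) \oplus \HH_0(\Z)\) corresponds to \(\HH(j_\#)([r_\U]) + \HH(i_\#)([r_\Z]) \in \HH_0(\C)\). Since \(\HO\) is additive, it is enough to compute \(\HO(\HH(j_\#)[r_\U])\) and \(\HO(\HH(i_\#)[r_\Z])\) evaluated at a \(k\)-point \(x\).

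The main step is a naturality property of \(\HO\) with respect to the functoriality of \cref{remark:functoriality_moduli_of_objects}. Let \(f \colon \D \to \C\) be a compact-preserving left adjoint between finite type categories, with right adjoint \(g\) that is itself a left adjoint. I claim that the square formed by
\[
\HH(f) \colon \HH(\D) \to \HH(\C), \qquad g_{(-)}^\ast \colon \mathrm{R}\Gamma(\M_\D,\OO_{\M_\D}) \to \mathrm{R}\Gamma(\M_\C,\OO_{\M_\C})
\]
and the two maps \(\HO_\D\), \(\HO_\C\) commutes. To prove this, use the second description of \(\HO\) in the proof of \cref{lemma:HO}, as a limit over points \(\C^\omega \to \Perf(B)\). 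Under the identification of \cref{remark:functoriality_moduli_of_objects}, the map \(g_{(-)}\) is precomposition with \(f\) on compact objects. So a point \(h \colon \C^\omega \to \Perf(B)\) is sent to \(h \circ f\), and \(\HH(h)\circ\HH(f) = \HH(h\circ f)\), followed by the same trace map \(\HH(B) \to B\). The result is that \(\HO_\C(\HH(f)(c))(x) = \HO_\D(c)(g_{(-)}x)\). I expect this step to be the main obstacle: one has to check carefully that the duality \(\C \leftrightarrow \C^\vee\) used in \(\HO\) is compatible with passing from \(f\) to precomposition, and that the compatibility holds at the level of \(\pi_0\).

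Apply this with \(f = j_\#\), \(g = j^\ast\), and with \(f = i_\#\), \(g = i^\ast\). This needs \(i^\ast\) and \(j^\ast\) to be left adjoints, which is automatic for a récollement in \(\Pr^{\mathrm{L}}_k\). It also needs \(i_\#\) and \(j_\#\) to preserve compact objects, which follows because their right adjoints \(i^\ast\), \(j^\ast\) commute with colimits. We then get
\[
\HO(\HH(j_\#)[r_\U])(x) = \HO_\U([r_\U])(j^\ast x), \qquad \HO(\HH(i_\#)[r_\Z])(x) = \HO_\Z([r_\Z])(i^\ast x).
\]
Finally, \cref{proposition:HO_description} applied to \(\U \simeq \LMod_{R_\U}\) and \(\Z \simeq \LMod_{R_\Z}\) identifies these two terms with \(tr(\rho_\U(r_\U))\) and \(tr(\rho_\Z(r_\Z))\). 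Here \((V_\U,\rho_\U)\) and \((V_\Z,\rho_\Z)\) are the modules corresponding to \(j^\ast x\) and \(i^\ast x\). Summing the two terms gives the stated formula.
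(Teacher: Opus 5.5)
Your proposal is correct and is essentially the paper's proof. The paper also invokes functoriality of $\HO$ along $(i^\ast, j^\ast)\colon \M_\C \to \M_\Z \times \M_\U$ to obtain a commutative square whose left vertical map is $\HH(i_\#)\oplus\HH(j_\#)$ from \cref{lemma:splitting_HH}, then reads off the traces through \cref{proposition:HO_description}; your limit-over-points argument spells out the functoriality the paper calls immediate. As in the paper, you implicitly take $i_\#$ to be a compact-preserving left adjoint of $i^\ast$. \cref{remark:special_adjoints} does not provide this for an arbitrary récollement (it only produces $i^!$ and $j_\#$), but it does exist in the constructible setting where the corollary is used.
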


\begin{proof}
	By \cref{lemma:finite_type_récollement} and \cref{repr_moduli_of_objects}, the derived stacks $\M_\C, \M_\Z, \M_\U$ are locally geometric locally of finite presentation over $k$.
	Consider the morphism of derived stacks over $k$
\[
(i^\ast, j^\ast) \colon \M_\C \to \M_\Z \times \M_\U
\]
	of \cref{remark:functoriality_moduli_of_objects}.
	One immediately verifies that the morphism $\HO$ of \cref{lemma:HO} is functorial, so that we get a commutative diagram
\begin{equation}\label{diagram:HO_commutative}
\begin{tikzcd}[sep=small]
	{\HH(\Z) \oplus \HH(\U)} &&& {\mathrm{H}^0\mathrm{R}\Gamma(\M_\Z, \OO_{\M_\Z}) \oplus \mathrm{H}^0\mathrm{R}\Gamma(\M_\U, \OO_{\M_\U})} \\
	\\
	{\HH(\C)} &&& {\mathrm{H}^0\mathrm{R}\Gamma(\M_\C, \OO_{\M_\C})}
	\arrow["{\HO_\Z \oplus \HO_\U}", from=1-1, to=1-4]
	\arrow["{\HH(i_\#) \oplus \HH(j_\#)}"', from=1-1, to=3-1]
	\arrow["{(i^\ast)^\# \oplus (j^\ast)^\# }", from=1-4, to=3-4]
	\arrow["{\HO_\C}"', from=3-1, to=3-4]
\end{tikzcd}
\end{equation}
where the left vertical map is an equivalence by \cref{lemma:splitting_HH}.
\end{proof}

\section{Moduli of constructible and perverse sheaves}

	In this paragraph we introduce constructible and perverse sheaves, as well as their moduli. The role of perverse sheaves is central in the study of singularities and they naturally generalize local systems.
	Moreover they are linked to the study of differential equation with regular singularities (i.e., simple poles) via the Riemann--Hilbert correspondence (\cite{Kas84, Meb84}).
	Perverse sheaves originated form the theory of $\mathcal{D}$-modules via the Riemann--Hilbert correspondence as the shifted solution complex and the shifted de Rham complex of a holonomic $\mathcal{D}$-module on a complex manifold (\cite{Kas84, KS90}).
	An axiomatic background has been introduced in the influential paper \cite{BBDG18}
    We claim no originality here, possibly except \cref{subsection:HO_Cons}.
	Every statement claimed in this paragraph holds true in much wider generality, for which we refer to  \cite{PT22, HPT24, HPT26}.
	We choose to work under more restrictive hypothesis so that we can avoid introducing many technical notions, and in any case this is enough for our scope.

\subsection{Constructible and Perverse sheaves}

\begin{warning}
	Throughout this section we will work with hypersheaves instead of sheaves.
	In the application we are interested in, there is no difference between the two notions.
	Since there is no risk of confusion, we will not make any reference to this in our notation, see \cite{MO:168526}.
\end{warning}

\begin{definition}
	A stratified space is the data of a continuous map $\rho \colon X \to P$ where $X$ is a topological space and $P$ is a poset endowed with the Alexandroff topology (\cite[Notation 5.1]{HPT23}).
\end{definition}

\begin{definition}
	A complex subanalytic stratified space is the data of a triple $(M,X,P)$ where $M$ is a smooth complex analytic space, $X \subset M$ is a locally closed complex subanalytic subset, and $X \to P$ is a locally finite stratification by complex subanalytic subsets of $M$.
\end{definition}

\begin{definition}
	A complex algebraic stratified space is the data of a stratified space $(X,P)$ where $X$ is (the complex points of) an algebraic variety over $\mathbb{C}$ and $X \to P$ is a finite stratification by Zariski locally closed subsets.
\end{definition}

\begin{definition}\label{Whitney_strat_def}
	Let $M$ be a smooth manifold. 
	A stratified space $(X, P)$ on a closed subset $X \subset M$ is said to be a Whitney stratified space if the following conditions hold:
	\begin{enumerate}\itemsep=0.2cm
    \item $P$ is finite;
    \item for every $p \in P$, the stratum $X_p$ is smooth and connected;
    \item for every $p,q \in P$, $p < q$, the pair $(X_p, X_q)$ satisfies Whitney's conditions $B$ (see e.g. \cite[Section 1.2]{GM88}).
    \item if $p \in P$ and $\overline{X_p}$ is the closure of $X_p$ in $M$, the intersection $(\overline{X_p} \setminus X_p) \cap X$ is a union of strata.
\end{enumerate}
\end{definition}

\begin{definition}
	Let $(X,P)$ be a stratified space and let $\E \in \Cat_\infty$. 
	For $p \in P$, denote by $i_p \colon X_p \to X$ the inclusion.
	An object $F \in \Sh(X; \E)$ is constructible if $i_p^\ast F$ is locally constant for every $p \in P$.
	We denote by
	\[
	\Cons_P(X; \E) \subset \Sh(X ; \E)
	\]
	the full subcategory spanned by constructible sheaves.
\end{definition}

\begin{definition}
	Let $(X,P)$ be a stratified space and let $\E \in \Cat_\infty$.
	We let 
\[
\Cons_{P, \omega}(X; \E) \subset \Cons_P(X; \E)
\]
be the full subcategory spanned by constructible sheaves with compact stalks.
\end{definition}

	The following shows that the stack of constructible sheaves, constructed as a moduli of objects is locally geometric.

\begin{proposition}\label{proposition:Whitney_finite_type}
	Let $(X,P)$ be a complex Whitney stratified space.
	Then $\Cons_P(X; \Mod_k) \in \Prlo_k$ is of finite type.
\end{proposition}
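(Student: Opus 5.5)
We will prove that $\Cons_P(X;\Mod_k)$ is compactly generated and is a compact object of $\Prlo_k$. To do this we identify it with the category of modules over a finitely presented (homotopically finite) $\infty$-category, namely the exit-path category of the stratification.

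\textbf{Step 1: exodromy.} Since $(X,P)$ is a Whitney stratified space, it is conically stratified, and the stratification is finite. We invoke the exodromy equivalence of Lurie and Haine--Porta--Teyssier \cite{HPT24, PT22}:
\[
\Cons_P(X;\Mod_k)\simeq \Fun(\Pi_\infty(X,P),\Mod_k),
\]
where $\Pi_\infty(X,P)$ is the $\infty$-category of exit paths. The right hand side is $\Fun(\Pi_\infty(X,P),\Mod_k)\simeq \Mod_k\otimes \mathcal{P}(\Pi_\infty(X,P))$. It is compactly generated by the corepresentables $k[\Map(x,-)]$, so it lies in $\Prlo_k$.

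\textbf{Step 2: finiteness of exit paths.} This is the main obstacle. We must show that $\Pi_\infty(X,P)$ is a compact object of $\Cat_\infty$, i.e.\ a retract of a finite $\infty$-category. We will use that a compact complex Whitney stratified space, or a locally closed one arising as a complement of closed unions of strata in a compact one, admits a finite triangulation compatible with the strata (Goresky, Lojasiewicz, Kashiwara--Schapira for subanalytic sets). Given a finite simplicial complex $K$ refining the stratification, $\Pi_\infty(X,P)$ is a localization of the finite poset of simplices $\mathrm{sd}(K)$: we invert the maps between simplices lying in the same stratum. A localization of a finite category at a finite set of morphisms is a finite pushout in $\Cat_\infty$, hence compact. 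In the non-compact case, we will use the conical structure near the ends, as in \cite{HPT24}, to reduce to a finite triangulation of a compact model with the same exit-path category. Alternatively, we can cite directly the finiteness statement of \cite{PT22, HPT26}.

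\textbf{Step 3: conclusion.} The functor $\Cat_\infty\to\Prlo_k$, $\mathcal{A}\mapsto\Fun(\mathcal{A},\Mod_k)$, is given by $\mathcal{A}\mapsto \mathrm{Ind}(\Perf$-envelope$)$, i.e.\ $k$-linearization followed by presheaves. It preserves colimits and finite objects. Indeed, it is left adjoint to the functor sending $\D$ to $(\D^\omega)^{\simeq}$-type underlying data, whose right adjoint commutes with filtered colimits in $\Prlo_k$. Hence it sends compact objects of $\Cat_\infty$ to compact objects of $\Prlo_k$. Combining this with Steps 1 and 2, $\Cons_P(X;\Mod_k)$ is of finite type. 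As a consistency check, each $\Fun(\Pi_\infty(X_p),\Mod_k)$ is smooth, and the récollement structure of \cref{lemma:finite_type_récollement} is compatible with this.
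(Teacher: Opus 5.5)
Your argument is the paper's route made explicit. The paper's proof is only a citation of \cite[Corollary 4.1.11]{HPT26} together with \cite[Theorem 4.1.22]{Lam25Perv}. These package ``exodromy $+$ compactness of the exit-path $\infty$-category $\Rightarrow$ finite type'', plus the fact that complex Whitney stratified spaces meet those hypotheses. Step~1 is sound. Step~2 is also sound for $X$ compact, or for an open union of strata in a compact Whitney space: there $X$ is a locally closed union of open simplices of a \emph{finite} triangulation, so $\Pi_\infty(X,P)$ is a localization of a finite poset, hence a finite colimit of simplices.

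\textbf{The gap in Step~2.} The remaining non-compact case is the genuine gap. ``Using the conical structure near the ends'' is not an argument, and no argument can work without a finiteness hypothesis at infinity. Read literally, \cref{Whitney_strat_def} allows $M=X=\mathbb{C}\setminus\mathbb{Z}$ with a single stratum. Then $\pi_1(X)$ is free of infinite rank, so $\Pi_\infty(X)$ is not compact in $\Cat_\infty$. Moreover $\Loc(X;\Mod_k)$ is not of finite type: its moduli of objects is not locally of finite presentation, since already the rank-one objects are parametrized by $\prod_{\mathbb{N}}\mathbb{G}_m$. So compactness of $\Pi_\infty(X,P)$ is the real input. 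It has to come from an additional finiteness hypothesis (e.g.\ $X$ compact, relatively compact subanalytic, or algebraic), which the paper implicitly imports through the cited results. Your ``alternatively, cite'' is therefore not optional; it is the actual content of Step~2.

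\textbf{Step~3 needs correcting.} Maps $\Fun(\mathcal{A},\Mod_k)\to\D$ in $\Prlo_k$ are the same as functors $\mathcal{A}^{op}\to\D^\omega$, obtained by restricting along $a\mapsto k[\Map(a,-)]$. So the right adjoint is $\D\mapsto(\D^\omega)^{op}$ as an $\infty$-category, not its core. With the core you would get the left adjoint $\mathcal{A}\mapsto\Fun(|\mathcal{A}|,\Mod_k)$, which forgets the stratification. That this right adjoint preserves filtered colimits is also something you must justify. It uses $\Prlo_k\simeq\mathrm{Cat}^{\mathrm{perf}}_k$, together with the fact that a filtered colimit of idempotent-complete stable $\infty$-categories, computed in $\Cat_\infty$, is again idempotent complete. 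This holds because in the stable setting idempotents are detected on the homotopy category, i.e.\ by finite data. Alternatively, the corrected functor is a left adjoint, so it preserves finite colimits and retracts. It then suffices that each $\Fun(\Delta^n,\Mod_k)$ is of finite type. That holds by \cref{relation_finiteness_conditions}, since it is modules over the path algebra of the linear $A_{n+1}$-quiver, which is smooth and proper. A minor variance slip: $\Fun(\mathcal{A},\Mod_k)\simeq\mathcal{P}(\mathcal{A}^{op})\otimes\Mod_k$, not $\mathcal{P}(\mathcal{A})\otimes\Mod_k$.
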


\begin{proof}
	This follows by combining \cite[Corollary 4.1.11]{HPT26} with \cite[Theorem 4.1.22]{Lam25Perv}.
\end{proof}

	The following statement will allow us to apply the results from \cref{subsection:HO}.

\begin{proposition}\label{proposition:recollements_Cons}
	Let $(X,P)$ be a complex Whitney stratified space and let $Z \subset P$ be a closed subset and let $U \coloneqq P \setminus Z$.
	Consider the corresponding open and closed immersions $j \colon X_U \hookrightarrow X$ and $i \colon X_Z \to X$.
	Let $A$ be a simplicial commutative ring.
	The functors
\[
i^\ast \colon \Cons_P(X; \Mod_A) \to \Cons_Z(X_Z; \Mod_A), \ \ \ \ j^\ast \colon \Cons_P(X; \Mod_A) \to \Cons_U(X_U; \Mod_A) 
\]
exhibits $\Cons_P(X; \Mod_A)$ as a récollement of $\Cons_Z(X_Z; \Mod_A)$ and $\Cons_U(X_U; \Mod_A)$.
	Moreover, the above récollement restricts to exhibit $\Cons_{P, \omega}(X; \Mod_A)$ as a récollement of $\Cons_{Z, \omega}(X_Z; \Mod_A)$ and $\Cons_{U, \omega}(X_U; \Mod_A)$.
\end{proposition}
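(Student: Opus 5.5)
We first treat the récollement on all of $\Sh(X;\Mod_A)$ and then check that each of the required functors preserves the relevant constructibility conditions.

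\textbf{Step 1: the sheaf-level récollement.} For the open immersion $j \colon X_U \hookrightarrow X$ with closed complement $i \colon X_Z \hookrightarrow X$, the standard six-functor formalism for hypersheaves valued in the stable presentable category $\Mod_A$ gives a récollement $(i^\ast \colon \Sh(X;\Mod_A) \to \Sh(X_Z;\Mod_A),\ j^\ast \colon \Sh(X;\Mod_A) \to \Sh(X_U;\Mod_A))$ in the sense of \cref{def:recollements}; see \cite[Section A.8]{Lur17}. Concretely:
\begin{itemize}\itemsep=0.2cm
\item $i_\ast$ and $j_\ast$ are fully faithful;
\item $j^\ast i_\ast \simeq 0$;
\item $i^\ast$ and $j^\ast$ are jointly conservative, which can be checked on stalks since $X = X_U \sqcup X_Z$ set-theoretically;
\item $i^\ast$ and $j^\ast$ are exact, since everything is stable.
\end{itemize}

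\textbf{Step 2: restriction to constructible sheaves.} Since the statement is about the full subcategories $\Cons_P(X;\Mod_A)\subset \Sh(X;\Mod_A)$ and their analogues on $X_Z$ and $X_U$, it suffices to show that $i^\ast, j^\ast, i_\ast, j_\ast$ preserve constructibility. A récollement then restricts, because all the axioms are inherited by full subcategories closed under the relevant functors and finite limits.

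\begin{itemize}\itemsep=0.2cm
\item $i^\ast$ and $j^\ast$ preserve constructibility trivially, since the strata of $X_Z$ and $X_U$ are strata of $X$.
\item $i_\ast$ preserves constructibility since it is extension by zero along a closed immersion: its restriction to strata in $U$ is zero, and to strata in $Z$ it is the original sheaf.
\item For $j_\ast$, we must show that $i_p^\ast j_\ast F$ is locally constant on $X_p$ for $p\in Z$ and $F$ $U$-constructible. This is where the Whitney hypothesis enters. By Thom--Mather local triviality along strata, there is a conic neighborhood structure near each point of $X_p$, so the stalks of $j_\ast F$ along $X_p$ are computed by the link and vary locally constantly. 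Alternatively, one can invoke the exodromy equivalence of \cite{HPT26}, $\Cons_P(X;\Mod_A)\simeq \Fun(\Pi_\infty(X,P),\Mod_A)$. Under it, $j_\ast$ is right Kan extension along the inclusion of the exit-path $\infty$-category of $X_U$, which is a sieve/cosieve inclusion.
\end{itemize}

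I would cite \cite{HPT24, HPT26} for this stability of constructibility under $j_\ast$. This is the main obstacle; everything else is formal.

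\textbf{Step 3: compact stalks.} For the "moreover" part, we need $i^\ast, j^\ast, i_\ast, j_\ast$ to preserve $\Cons_{\bullet,\omega}$.

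\begin{itemize}\itemsep=0.2cm
\item For $i^\ast$, $j^\ast$, $i_\ast$ this is immediate, since stalks are either unchanged or zero.
\item For $j_\ast$, the stalk at $x\in X_Z$ is $\Gamma(B_x\cap X_U, F)$ for a small conic neighborhood $B_x$. Under exodromy, this is a limit of $F$ over the exit-path category of the link. This category is finite, in the sense of a finite $\infty$-category, because the Whitney stratification is finite and links are compact stratified spaces admitting finite triangulations. A finite limit of perfect $A$-modules is perfect, so $j_\ast$ preserves compact stalks.
\end{itemize}

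Joint conservativity, fully faithfulness and $j^\ast i_\ast\simeq 0$ again restrict from Step 1, which completes the argument.
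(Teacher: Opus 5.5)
Your proof is correct in outline. It differs from the paper in presentation rather than in substance. The paper argues purely by citation: \cite[Theorem 4.1.22]{Lam25Perv} places complex Whitney stratified spaces in the general exodromic setting of \cite{Lam25Perv, PT22}, and there the récollement and its restriction to compact stalks are already established (\cite[Recollection 4.2.3]{Lam25Perv}, \cite[Corollaries 6.7.2 \& 6.7.6]{PT22}).

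You instead start from the sheaf-level récollement and isolate the two non-formal inputs:
\begin{itemize}
\item Step 2 is stability of constructibility under $j_\ast$, which you also end up citing.
\item Step 3 is a hands-on version of local categorical compactness, via conic neighbourhoods and finite triangulations of links.
\end{itemize}
Your version shows exactly where the Whitney hypothesis is used. The paper's version needs no triangulation theorems and applies to any suitably (locally) categorically compact conically stratified space, which is the generality implicitly needed in \cref{corollary:quasi_affine_Cons}.

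Two points should be tightened.

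First, the exodromy alternative in Step 2 proves nothing on its own. Exodromy only shows that $j^\ast$, restricted to constructible sheaves, has a right adjoint given by right Kan extension. Identifying that adjoint with the sheaf-theoretic $j_\ast$ is exactly the claim to be proved. So you need either the conic-neighbourhood computation or a citation.

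Second, in Step 3 the stalk of $j_\ast F$ at $x \in X_Z$ is a limit over $\Pi_\infty(L \cap X_U)$, not over $\Pi_\infty(L)$. Here $L\cap X_U$ is an open, non-compact union of strata of the link. This category is still compact in $\Cat_\infty$, for instance via a triangulation of $L$ compatible with the strata. However, \emph{finite} should read \emph{compact}. Perfectness of the limit then uses that $\Perf(A)$ is idempotent complete: a limit over a retract of a finite $\infty$-category is a retract of a finite limit of perfect modules.
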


\begin{proof}
	By \cite[Theorem 4.1.22]{Lam25Perv}, this is a particular case of \cite[Recollection 4.2.3]{Lam25Perv}.
	See also \cite[Corollary 6.7.2 \& Corollary 6.7.6]{PT22}).
\end{proof}

\begin{notation}
	In the setting of \cref{proposition:recollements_Cons}, the left adjoint $j_\#$ of $j^\ast$ arising from the recollement is the extension by zero functor and usually denoted by $j_!$.
	We will adopt this convention.
\end{notation}

	We now introduce the categories of perverse sheaves, following \cite{BBDG18}.

\begin{definition}\label{def_perv}
	Let $(X,P)$ be a stratified space and $\mathfrak{p} \colon P \to \ZZ$ be a function. 
	Let $A$ be a simplicial commutative ring.
	Consider the pair of full subcategories $\Sh(X;\Mod_R)$ defined by
\[
{}^{\mathfrak p} \Sh(X;\Mod_A)_{\geq 0} \coloneqq \big\{F \in \Sh(X;\Mod_A) \mid \ \forall p \in P \ , \ \pi_i( i_p^{\ast}(F) ) = 0 \text{ for every } i \leq \mathfrak p(p) \big\},
\]
\[
{}^{\mathfrak p} \Sh(X;\Mod_A)_{\leq 0} \coloneqq \big\{F \in \Sh(X;\Mod_A) \mid \ \forall p \in P \ , \ \pi_i( i_p^{!}(F) ) = 0 \text{ for every } i \geq \mathfrak p(p) \big\}.
\]
	The $\infty$-category of perverse sheaves is
\[
{}^\mathfrak{p} \Perv(X; \Mod_A) \coloneqq {}^{\mathfrak p} \Sh(X;\Mod_A)_{\leq 0} \cap {}^{\mathfrak p} \Sh(X;\Mod_A)_{\geq 0} \ .
\]
\end{definition}

\begin{notation}
	In the setting of \cref{def_perv}, we will refer to the function $\mathfrak{p} \colon P \to \ZZ$ as the \textit{perversity}.
\end{notation}

\begin{definition}
    In the setting of \cref{def_perv}, we set
\[
{}^{\mathfrak p} \Cons_P(X;\Mod_R)_{\geq 0} \coloneqq {}^{\mathfrak p} \Sh(X;\Mod_R)_{\geq 0} \cap \Cons_P(X;\Mod_R) \subset \Cons_P(X;\Mod_R) \ .
\]
    We define analogously ${}^{\mathfrak p} \Cons_P(X;\Mod_R)_{\leq 0}$, ${}^{\mathfrak p} \Perv_P(X;\Mod_R)$, ${}^{\mathfrak p} \Cons_{P, \omega}(X;\Mod_R)_{\geq 0}$, ${}^{\mathfrak p} \Cons_{P, \omega}(X;\Mod_R)_{\leq 0}$, ${}^{\mathfrak p} \Perv_{P, \omega}(X;\Mod_R)$.
\end{definition}

\begin{proposition}\label{perverse_t_structure}
	Let $(X,P)$ be a stratified space with $P$ finite and $A$ be a simplicial commutative ring.
\begin{enumerate}\itemsep=0.2cm
    \item The pair of $\infty$-categories $({}^{\mathfrak p} \Sh(X;\Mod_A)_{\leq 0}, {}^{\mathfrak p} \Sh(X;\Mod_A)_{\geq 0})$ of \cref{def_perv} define a $t$-structure ${}^{\mathfrak{p}} \tau$ on $\Sh(X;\Mod_A)$.
\end{enumerate}  
    If furthermore $(X,P)$ is a complex Whitney stratified space, then:
\begin{enumerate}\itemsep=0.2cm
    \item[(2)] the $t$-structure ${}^{\mathfrak{p}} \tau$ restricts to a $t$-structure on $\Cons_P(X;\Mod_R)$;
    \item [(3)]if moreover $A$ is a discrete regular ring, the $t$-structure ${}^{\mathfrak{p}} \tau$ restricts to a $t$-structure on $\Cons_{P, \omega}(X;\Mod_A)$.
\end{enumerate}
\end{proposition}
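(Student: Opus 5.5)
The plan is to prove the three items in turn, reducing each to the gluing formalism of \cite{BBDG18} applied to the récollements above.

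For item (1), I will induct on $|P|$. If $P$ is a singleton, $\mathfrak p$ is a constant $n$, and the pair in \cref{def_perv} is just the standard $t$-structure on $\Sh(X;\Mod_A)$ shifted by $n$. (It is a $t$-structure on the presentable stable category of hypersheaves, since truncations are computed by sheafifying objectwise truncations.) For general $P$, I pick a minimal element $p$ of $P$, so that $Z=\{p\}$ is closed in the Alexandroff topology, and set $U=P\setminus Z$. The récollement $(i^\ast,j^\ast)$ of \cref{def:recollements} then holds at the level of all sheaves, with the adjoints described in \cref{remark:special_adjoints}. The inductive hypothesis gives perverse $t$-structures on $\Sh(X_U)$ and $\Sh(X_Z)$, and I glue them using \cite[Théorème 1.4.10]{BBDG18}, or its $\infty$-categorical version \cite[Proposition A.8.x]{Lur17}. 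The glued $t$-structure has connective part $\{F : j^\ast F\geq 0,\ i^\ast F\geq 0\}$ and coconnective part $\{F : j^\ast F\leq 0,\ i^!F\leq 0\}$. Because $i_q^\ast$ and $i_q^!$ for strata inside $U$ factor through $j^\ast$ (as $j$ is an open immersion, $j^!=j^\ast$), these conditions unwind stratum by stratum to exactly the ones in \cref{def_perv}.

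For item (2), it suffices to show that $\Cons_P(X;\Mod_A)$ is stable under the truncation functors. Using \cref{proposition:recollements_Cons}, the récollement restricts to constructible sheaves, provided that $i^\ast,i_\ast,j^\ast,j_\ast,i^!$ preserve $P$-constructibility. This is the step where the Whitney hypothesis enters, through \cite{PT22, HPT26}, and in particular the constructibility of $j_\ast$ for Whitney stratifications. The gluing truncations are built from $\tau$'s on the pieces together with these functors, for example as $\tau_{\geq0}F$ obtained from cofibers involving $j_\ast\tau^U j^\ast$ and $i_\ast\tau^Z i^\ast$ as in \cite[1.4.13]{BBDG18}. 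By induction they therefore preserve constructibility. In the base case, the truncation of a local system on a single stratum is again a local system.

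Item (3) is where I expect the real obstacle, because truncations do not preserve compactness of stalks in general. The base case needs $A$ regular: over a discrete regular ring of finite global dimension, perfect complexes are exactly the bounded complexes with finitely generated homology, and this class is stable under truncation. Then $\tau_{\leq n}$ of a local system with perfect stalks still has perfect stalks. For the inductive step, I use that the restricted récollement of \cref{proposition:recollements_Cons} on $\Cons_{P,\omega}$ includes $j_\ast$, again by the Whitney condition. The gluing truncations then stay within compact-stalk sheaves, since they are built from finitely many fibers and cofibers of functors that preserve $\Cons_{\omega}$ together with the truncations on the pieces. I would cite \cite{HPT26} for the preservation statements rather than reprove them.
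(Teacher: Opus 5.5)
Your proof is correct, but it follows a different route from the paper. The paper gives no argument here. It deduces all three items from \cite[Proposition 4.2.5]{Lam25Perv}, using \cite[Theorem 4.1.22]{Lam25Perv} to check that complex Whitney stratified spaces satisfy the hypotheses of that general result.

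You instead carry out the classical gluing of \cite[Th\'eor\`eme 1.4.10]{BBDG18} by induction on $|P|$, splitting off the closed stratum $\{p\}$ for a minimal $p$. You unwind the glued conditions stratum by stratum via $i_q^\ast = (i_q^U)^\ast j^\ast$ and $i_q^! = (i_q^U)^! j^\ast$. This isolates where each hypothesis enters:
\begin{itemize}
\item Item (1) needs only the open/closed r\'ecollement of (hyper)sheaf categories.
\item Items (2) and (3) hold because the glued truncations are built from $i^\ast, i_\ast, j^\ast, j_\ast$, (co)fibres and the truncations on the pieces. \cref{proposition:recollements_Cons} says exactly that $\Cons_P$ and $\Cons_{P,\omega}$ are stable under these functors, and $i^!$, $j_!$ are expressible through them.
\item Regularity of $A$ is used only in the one-stratum case.
\end{itemize}
Your approach makes these dependencies explicit. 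The citation instead covers the more general class of stratified spaces handled in \cite{Lam25Perv}.

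A few points to tidy up:
\begin{itemize}
\item The inductive hypothesis needs $(X_U,U)$ to again be a complex Whitney stratified space. This holds if you replace the ambient manifold $M$ by the open $M\setminus X_Z$, since $X_Z$ is closed in $M$. The Whitney and frontier conditions are local, so they survive.
\item In (3) you do not need finite global dimension. For any noetherian regular ring, $\Perf(A)$ is exactly the bounded complexes with finitely generated homology. Perfectness of a pseudo-coherent complex can be checked locally on $\Spec(A)$, and regular local rings have finite global dimension.
\item The placeholder reference to \cite{Lur17} is unnecessary. Applying \cite[1.4.10]{BBDG18} to the homotopy category suffices, since a $t$-structure on a stable $\infty$-category is the same as one on its triangulated homotopy category.
\end{itemize}
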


\begin{proof}
	By \cite[Theorem 4.1.22]{Lam25Perv}, this is a special case of \cite[Proposition 4.2.5]{Lam25Perv}.
\end{proof}

	We now introduce the two perversities that are relevant for this paper.

\begin{definition}\label{definition:middle_perversity}
	Let $(X,P)$ be a complex Whitney stratified space.
	The middle perversity is the function
\begin{align*}
	\mathfrak{p} \colon & P \to \ZZ \\
	& p \mapsto \dim_{\mathbb{C}} X_p \ .
\end{align*}
\end{definition}	

\begin{definition}\label{definition:zero_perversity}
	Let $(X,P)$ be a stratified space with $P$ finite.
	The trivial perversity is the constant function $P \to \ZZ$ equal to zero.
\end{definition}

\begin{remark}\label{remark:trivial_perversity}
	In the setting of \cref{definition:zero_perversity}, the corresponding perverse $t$-structure given by \cref{perverse_t_structure}-$(1)$ is the standard $t$-structure (\cite[Lemma 3.1.8]{HPT26}), for which $F \in \Sh(X; \Mod_A)$ is (co)connective if and only if its stalks are (co)connective $A$-modules.
	This $t$-structure always descends to $\Cons_P(X; \Mod_A)$ (and to $\Cons_{P, \omega}(X; \Mod_A)$ if $A$ is a discrete regular ring).
	An element of $F \in \Cons_P(X; \Mod_A)$ is ${}^{\mathfrak{p}}\tau$-flat if and only if its stalks are $A$-modules of Tor-amplitude $[0,0]$.
\end{remark}

\begin{notation}
	In the setting of \cref{definition:zero_perversity}, we denote the perverse $t$-structure by $\tau_{st}$ and its heart by $\Cons_P(X; \Mod_A)^\heartsuit$.
\end{notation}

	\cref{proposition:recollements_Cons} and \cref{perverse_t_structure} allow one to give an explicit description of simple perverse sheaves with perfect stalks, that we now recall.
	This will play a crucial role in \cref{subsection:quasi_affineness}.

\begin{recollection}\label{recollection:simple_perverse_sheaves}
	Let $(X,P)$ be a  stratified space with $P$ finite and consider a perversity $\mathfrak{p} \colon P \to \ZZ$.
	Let $A$ be a discrete regular ring and $p \in P$.
	Assume that the conclusion of \cref{perverse_t_structure}-$(3)$ holds (e.g., if $(X,P)$ is a complex Whitney stratified space).
	Consider the open immersion $j_p \colon X_p \hookrightarrow X_{\leq p}$ and the closed immersion $i_{\leq p} \colon X_{\leq p} \to X$.
	We abuse the notations an denote the composition $P_{\leq P} \subset P \xrightarrow{\mathfrak{p}} \ZZ$ also by $\mathfrak{p}$.
	In the adjunction $j_{p, !} \dashv j_p^\ast \dashv j_{p, \ast}$, the first and third functors are fully faithful.
	Hence we get a natural transformation of functors $\Loc_\omega(X_p; \Mod_A) \to \Cons_{P_{\leq p}, \omega}(X_{\leq p}; \Mod_A) $
\[
j_{p, !} \Longrightarrow j_{p, \ast} \ .
\]
	Moreover, the functor $j_p^\ast$ is ${}^{\mathfrak{p}}\tau$-exact, so that $j_{p, !}$, $j_{p, \ast}$ are respectively right and left ${}^{\mathfrak{p}}\tau$-exact.
	Hence post-composing with the $0$-th perverse cohomology functor, we get a natural transformation of functors $\Loc_\omega(X_p; \Mod_A) \to {}^{\mathfrak{p}}\Perv_{P_{\leq p}, \omega}(X_{\leq p}; \Mod_A)$
\[
{}^{\mathfrak{p}}j_{p, !} \coloneqq {}^{\mathfrak{p}}\pi_0 j_{p, !} \Longrightarrow {}^{\mathfrak{p}}j_{p, \ast} \coloneqq {}^{\mathfrak{p}}\pi_0 j_{p, \ast} \ .
\]
	Consider the functor of abelian categories
\begin{align*}
(i_{\leq p})_\ast (j_p)_{!\ast} \colon & \Loc_\omega(X_p; \Mod_A)^\heartsuit \to {}^{\mathfrak{p}}\Perv_{P, \omega}(X; \Mod_A)\\
& L \mapsto (i_{\leq p})_\ast (j_p)_{!\ast} L \coloneqq (i_{\leq p})_\ast {}^{\mathfrak{p}}\mathrm{Im}({}^{\mathfrak{p}}j_{p, !} L \to  {}^{\mathfrak{p}}j_{p, \ast} L) \ ,
\end{align*}
where the image is taken in the abelian category ${}^{\mathfrak{p}}\Perv_{P_{\leq p}, \omega}(X_{\leq p}; \Mod_A)$.
	We have that $F \in {}^{\mathfrak{p}}\Perv_{P, \omega}(X; \Mod_A)$ is simple if and only if there exists $p \in P$ and a simple local system $L \in \Loc_\omega(X_p; \Mod_A)^\heartsuit$ such that $F \simeq (i_{\leq p})_\ast (j_p)_{!\ast} L$.
\end{recollection}

	The following results will allow us to apply the theory of \cref{subsection:moduli_flat_objects}, together with openness of flatness that we shall discuss in the next paragraph.

\begin{lemma}\label{lemma:Perv_admissible}
	Let $(X,P)$ be a complex Whitney stratified space.
	Let $\mathfrak{p} \colon P \to \ZZ$ be a function and let $A \in \dAff_k$.
	Then the perverse $t$-structure ${}^{\mathfrak{p}}\tau$ on $\Cons_P(X; \Mod_A)$ is admissible.
\end{lemma}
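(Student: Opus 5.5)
We need to show that ${}^{\mathfrak{p}}\tau$ on $\Cons_P(X;\Mod_A)$ is admissible, that is, $\omega$-accessible and non-degenerate in the sense of \cref{accessible_t_structure}. The plan is to reduce every condition to statements about stalks and costalks along the finitely many strata, using the definition of ${}^{\mathfrak{p}}\tau$ in \cref{def_perv} and the fact (\cref{perverse_t_structure}-(2)) that it restricts to $\Cons_P(X;\Mod_A)$.

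\emph{$\omega$-accessibility.} We must show that ${}^{\mathfrak p}\Cons_P(X;\Mod_A)_{\leq 0}$ is closed under filtered colimits. This subcategory is cut out by vanishing of $\pi_i(i_p^! F)$ for $i \geq \mathfrak{p}(p)$. The key point is that $i_p^!$ commutes with filtered colimits on constructible sheaves. To see this, write $i_p = i_{\leq p}\circ j_p$ and use \cref{remark:special_adjoints}, which gives $i^! \simeq \fib(i^\ast \to i^\ast j_\ast j^\ast)$. It then suffices that $j_\ast$ preserves colimits on the constructible categories. This was already used in the proof of \cref{lemma:finite_type_récollement}: finite type (\cref{proposition:Whitney_finite_type}) makes $j^\ast$ preserve compact objects, so $j_\ast$ commutes with filtered colimits. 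Since $\pi_i$ also commutes with filtered colimits in $\Mod_A$, the coconnective part is stable under filtered colimits. The connective part is automatically stable under all colimits, because $i_p^\ast$ is a left adjoint.

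\emph{Non-degeneracy.} We argue by induction on $|P|$, using the récollement of \cref{proposition:recollements_Cons} with $Z=\{p\}$ a closed stratum. Suppose $F \in \bigcap_n {}^{\mathfrak p}\Cons_{\geq n}$. Then for every $p$ the object $i_p^\ast F$ is $\infty$-connective, so every stalk is $\infty$-connective in $\Mod_A$. The standard $t$-structure on $\Mod_A$ is left complete, so all stalks vanish; stalks detect equivalences of hypersheaves, hence $F\simeq 0$. For $F \in \bigcap_n {}^{\mathfrak p}\Cons_{\leq n}$, all $i_p^! F$ vanish, since $\Mod_A$ is right complete for the connective $A$. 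Now take $p$ closed and $U = P\setminus\{p\}$. The open restriction satisfies $j^! = j^\ast$, so the induction hypothesis applied to $j^\ast F$ gives $j^\ast F\simeq 0$. Thus $F \simeq i_\ast i^! F = 0$.

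\emph{Main obstacle.} I expect the main difficulty to be the $\omega$-accessibility step, specifically making sure that $i_p^!$ commutes with filtered colimits. On arbitrary sheaves this fails, so the argument must genuinely use constructibility, via the Whitney hypothesis and the finite-type result. Throughout, completeness of $\Mod_A$ for a simplicial commutative ring $A$ is used freely.
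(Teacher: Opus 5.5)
Your overall architecture is sound, but the crux step rests on an invalid inference. You reduce $\omega$-accessibility to the claim that $j_\ast$ commutes with filtered colimits on constructible sheaves, via $i_p^! \simeq j_p^\ast i_{\leq p}^!$ and $i^! \simeq \fib(i^\ast \to i^\ast j_\ast j^\ast)$. That reduction is right, and you correctly flag it as the main difficulty. The justification you give for it does not work, however.

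Finite type of $\Cons_P(X;\Mod_k)$ (\cref{proposition:Whitney_finite_type}) is a property of the category. By itself it says nothing about whether a given colimit-preserving functor out of it, such as $j^\ast$, preserves compact objects: think of $-\otimes_k N$ on $\Mod_k$ with $N$ not perfect. Nor does the proof of \cref{lemma:finite_type_récollement} derive compact-preservation of $j^\ast$ from finite type. It derives it from $j_\ast$ having a right adjoint, which is part of the hypothesis that the récollement lives in $\Pr^{\mathrm L}_k$. That is exactly the colimit-preservation of $j_\ast$ you are trying to establish, so as written the key step is circular.

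The missing input is geometric. For a Whitney stratification and $x \in X_Z$, the stalk $(j_\ast G)_x$ is the sections of $G$ over a small punctured conic neighbourhood of $x$ in $X_U$. This is a limit over the exit-path $\infty$-category of a compact link with finitely many strata. For constructible $G$ it is therefore a limit indexed by a compact $\infty$-category, so it commutes with filtered colimits in $\Mod_A$.

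This local categorical compactness is what \cite[Theorem 4.1.22]{Lam25Perv} supplies (cf.\ \cite{PT22}). It is also what makes the récollement of \cref{proposition:recollements_Cons} a récollement in $\Pr^{\mathrm L}_k$, which the paper tacitly uses in \cref{lemma:HH_Cons}. And it is precisely what the paper's one-line proof imports by reducing to \cite[Lemma 4.2.6]{Lam25Perv} via \cite[Theorem 4.1.22]{Lam25Perv}.

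Once you cite this for $j_\ast$, your accessibility argument goes through. Your non-degeneracy argument is correct as written: stalks handle $\infty$-connective objects, and costalks plus induction on $|P|$ along a closed stratum, using $i_\ast i^! F \to F \to j_\ast j^\ast F$, handle $\infty$-coconnective ones. What you actually use there is only that a hypersheaf all of whose homotopy sheaves vanish is zero, not completeness of $\Mod_A$. With that repair, your proof becomes a self-contained unwinding of the lemma the paper simply cites.
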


\begin{proof}
	By \cite[Theorem 4.1.22]{Lam25Perv}, this is a special case of \cite[Lemma 4.2.6]{Lam25Perv}.
\end{proof}

\subsection{\texorpdfstring{The stacks $\mathbf{Cons}_P(X)$ and ${}^{\mathfrak{p}}\mathbf{Perv}_P(X)$}{The stacks mathbf{Cons}P(X) and {}{mathfrak{p}}mathbf{Perv}P(X)}}

\begin{recollection}
	Let $(X,P)$ be a complex Whitney stratified space and $\mathfrak{p} \colon X \to P$ be a function.
	The derived prestack of constructible sheaves is defined by the assignment
\begin{align*}
	\mathbf{Cons}_P(X) \colon & \dAff_k \to \mathrm{Spc}\\
	& \Spec (A) \mapsto Cons_{P,\omega}(X; \Mod_A)^{\simeq}
\end{align*}
with functoriality given by extension of scalars. 
	If $P$ is finite, we let
\[
{}^{\mathfrak{p}}\mathbf{Perv}_P(X) \subset \mathbf{Cons}_P(X)
\]
the sub-prestack over $k$ spanned by those constructible sheaves that are ${}^{\mathfrak{p}}\tau$-flat.
	By \cite[Theorem 4.2.11]{HPT26}, the prestack ${}^{\mathfrak{p}}\mathbf{Perv}_P(X)$ is a derived stack.
	We will refer to it as the perverse character stack.
\end{recollection}

	Together with \cref{lemma:Perv_admissible}, the following shows that the stack of perverse sheaves falls in the general framework of \cref{subsection:moduli_flat_objects}.

\begin{theorem}\label{openness_flatness_perv}
    Let $(X, R)$ be a complex Whitney stratified space and $\mathfrak{p} \colon P\to \ZZ$ be a function. 
    Then:
\begin{enumerate}\itemsep=0.2cm
    \item There is a canonical equivalence
\[
\M_{\Cons_P(X; \Mod_k)} \simeq \mathbf{Cons}_P(X) \ .
\]
    In particular $\mathbf{Cons}_P(X)$ is a locally geometric derived stack, locally of finite presentation over $k$.
    \item The perverse $t$-structure ${}^{\mathfrak{p}}\tau$ universally satisfies openness of flatness, that is, the morphism of derived stacks
\[
{}^{\mathfrak{p}}\mathbf{Perv}_P(X) \hookrightarrow \mathbf{Cons}_P(X)
\]
is representable by an open immersion.
    In particular ${}^{\mathfrak{p}}\mathbf{Perv}_P(X)$ is a $1$-Artin stack locally of finite presentation over $k$.
\end{enumerate}
\end{theorem}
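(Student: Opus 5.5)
Both parts reduce to results already recorded in the paper, once we check that $\Cons_P(X;\Mod_k)$ meets the hypotheses of the moduli-of-objects formalism.

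\textbf{Part (1).} By \cref{proposition:Whitney_finite_type}, $\C \coloneqq \Cons_P(X;\Mod_k)$ lies in $\Prlo_k$ and is of finite type. Hence \cref{repr_moduli_of_objects} applies: $\M_\C$ is a locally geometric derived stack, locally of finite presentation over $k$. It remains to identify $\M_\C$ with $\mathbf{Cons}_P(X)$, which takes three steps.
\begin{enumerate}
\item For $\Spec(A) \in \dAff_k$, identify $\C_A = \C \otimes_k \Mod_A$ with $\Cons_P(X;\Mod_A)$, naturally in $A$. This is the base-change property for constructible sheaves with coefficients, which I would take from \cite{HPT26} or \cite[Theorem 4.1.22]{Lam25Perv}. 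It uses compact generation of $\C$ by the exit-path (representable) objects, so that $\C \otimes_k \Mod_A \simeq \Funst_k((\C^\omega)^{op},\Mod_A)$ is computed objectwise.
\item Under this identification, show that $F$ is pseudo-perfect if and only if it has perfect stalks. By \cref{compact_generator_smooth} with a compact generator $E$, it suffices to test perfection of $\Hom(E\otimes A, F)$. Choosing $E$ to be the sum of the corepresenting objects for the stalk functors at one point of each stratum (available since $P$ is finite and the strata are connected), this mapping object is exactly the sum of stalks. So pseudo-perfect families are exactly the objects of $\Cons_{P,\omega}(X;\Mod_A)$.
\item Check that these equivalences are compatible with extension of scalars, so that they assemble into the equivalence of stacks in (1).
\end{enumerate}

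\textbf{Part (2).} By \cref{lemma:Perv_admissible}, ${}^{\mathfrak p}\tau$ is admissible, and by \cref{moduli_flat_objects} openness of the inclusion is equivalent to ${}^{\mathfrak p}\tau$ universally satisfying openness of flatness. To prove the latter, fix $S=\Spec(A)$ and $F \in \Cons_{P}(X;\Mod_A)$; we may reduce to pseudo-perfect $F$, the case relevant for the stack. I would proceed by induction on $|P|$, using the récollement of \cref{proposition:recollements_Cons} for a closed point $p$ (so $X_p$ is a closed stratum) and its open complement $U$.
\begin{enumerate}
\item $F$ is ${}^{\mathfrak p}\tau$-flat if and only if three conditions hold: $j^\ast F$ is flat for the restricted perversity on $U$; $i_p^\ast F$ is concentrated in perverse degrees $\geq 0$ after every base change; and $i_p^! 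F$ is concentrated in degrees $\leq 0$ after every base change.
\item The objects $i_p^\ast F$ and $i_p^! F$ are locally constant with perfect stalks on the connected stratum $X_p$. They are given by perfect $A$-modules: the stalk at a point, with monodromy.
\item For a perfect complex, the locus of $T \to S$ over which its base change has Tor-amplitude in a given interval is Zariski open, by semicontinuity of fibre cohomology.
\end{enumerate}
Intersecting these finitely many open conditions with the open locus for $U$ supplied by induction gives the open flat locus.

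\textbf{Main obstacle.} I expect the hardest step to be step 1 of Part (2): expressing flatness, which is tested against all of $\Mod_A^\heartsuit$, purely in terms of Tor-amplitude conditions on $i_p^\ast$ and $i_p^!$. The key is that $i_p^\ast$ and $i_p^!$ commute with $-\otimes_A M$ (for $i_p^!$ this uses constructibility), and that one-sided amplitude bounds for perfect complexes are detected on residue fields. With (2) in hand, \cref{geometric_heart} gives that ${}^{\mathfrak p}\mathbf{Perv}_P(X)$ is $1$-Artin and locally of finite presentation.
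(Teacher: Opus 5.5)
Your proposal is correct. Part (1) matches the paper. The paper also gets geometricity from \cref{proposition:Whitney_finite_type} and \cref{repr_moduli_of_objects}, and for $\M_{\Cons_P(X;\Mod_k)}\simeq\mathbf{Cons}_P(X)$ it simply cites \cite[Lemma 4.1.9]{HPT26}. Your three steps (exodromy with coefficients; pseudo-perfect $\Leftrightarrow$ perfect stalks, via the compact generator built from the objects corepresenting stalks at one point per stratum; compatibility with base change) unpack that citation faithfully.

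Part (2) takes a genuinely different route. The paper gives no argument and invokes \cite[Theorem 4.2.11]{HPT26} through \cite[Theorem 4.1.22]{Lam25Perv}. You instead reduce openness of flatness, for pseudo-perfect $F$, to the Zariski openness of one-sided amplitude loci of the perfect stalks of $i_p^\ast F$ and $i_p^!F$. Your route isolates exactly which finiteness inputs are needed: $i_p^!$ preserves perfect stalks, because the récollement of \cref{proposition:recollements_Cons} restricts to $\Cons_{P,\omega}$; and $i_p^!$ commutes with $-\otimes_A M$ for every discrete $M$, because $j_\ast$ preserves colimits thanks to the finiteness of exit-path $\infty$-categories for Whitney stratifications. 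The citation, by contrast, buys brevity and the generality of \cite{HPT26}.

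Three small points on your version:
\begin{enumerate}
\item The induction on $|P|$ through gluing is unnecessary. \cref{def_perv} is already stratumwise, so flatness of $F$ amounts to $2|P|$ conditions, one on the stalk of $i_p^\ast F$ and one on the stalk of $i_p^!F$ for each $p$. You can intersect the corresponding open loci at once, which also spares you re-verifying the hypotheses on $(X_U,U)$ and the identification of the glued $t$-structure with the perverse one.
\item In your step 1, the condition on $i_p^!F$ must be an upper bound on Tor-amplitude. That means testing after tensoring with discrete $A$-modules, or equivalently after base change to residue fields of $\pi_0A$. It cannot mean arbitrary derived base change $A\to B$, where $\pi_{>0}(B)$ destroys coconnectivity. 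Your final paragraph shows you have the right condition in mind.
\item Since you only treat pseudo-perfect $F$, what you prove directly is the open immersion ${}^{\mathfrak p}\mathbf{Perv}_P(X)\hookrightarrow\mathbf{Cons}_P(X)$. Universal openness of flatness for arbitrary $F\in\C_S$ is then a consequence of \cref{moduli_flat_objects}, not something your argument verifies itself.
\end{enumerate}
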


\begin{proof}
	By \cite[Theorem 4.1.22]{Lam25Perv}, the equivalence in item $(1)$ is \cite[Lemma 4.1.9]{HPT26}.
	The last part of $(1)$ follows by \cref{repr_moduli_of_objects} and \cref{proposition:Whitney_finite_type}.
	Again by \cite[Theorem 4.1.22]{Lam25Perv}, item $(2)$ is a special case of \cite[Theorem 4.2.11]{HPT26}.
\end{proof}

	In what follows we will need also a deeper understanding of what happens in the unstratified case, that is, for stack of locally constant sheaves.
	
\begin{recollection}[Monodromy equivalence]\label{recollection:monodromy}
	Let $X$ be a connected locally weakly contractible topological space (e.g., a manifold) and let $x \in X$.
	Let $\Pi_\infty(X)$ denote the $\infty$-grupoid of singular chain on $X$.
	By \cite[Observation 3.11]{HPT23} and \cite[Theorem 6.4.4]{PT22}, there are canonical equivalences
\[
\Loc(X; \Mod_k) \simeq \Fun(\Pi_\infty(X), \Mod_k) \simeq \Mod_{C_\bullet(\Omega_{x}X; k)} \ ,
\]
where again we did not differentiate between sheaves and hypersheaves since we in the cases of interest for us the two notions agree.
	The first equivalence, called monodromy equivalence, can be vastly generalized to the case of constructible sheaves and is at the heart of the representability of $\mathbf{Cons}_P(X)$.
	See \cite{CJ24, PT22, HPT24, HPT26}.
\end{recollection}

	In the unstratified setting, we can give an easier description of the stacks of constructible sheaves and of perverse character stacks.
	This also justifies the terminology "perverse character varieties".
	 
\begin{recollection}[Classical character varieties]
	Let $X$ be a connected locally weakly contractible topological space.
	The derived prestack of locally constant sheaves is defined by the assignment
\begin{align*}
	\mathbf{Loc}(X) \colon & \dAff_k \to \mathrm{Spc}\\
	& \Spec (A) \mapsto Loc_{\omega}(X; \Mod_A)^{\simeq}
\end{align*}
with functoriality given by extension of scalars. 
	Consider the substack	
\[
\mathbf{Loc}(X)^{[0,0]} \subset \mathbf{Loc}(X)
\]
the sub-prestack over $k$ spanned by those locally constant sheaves whose stalks are perfect and of Tor-amplitude $[0,0]$.
	If $\Pi_\infty(X)$ is a compact object of $\Cat_\infty$, then $\mathbf{Loc}(X)$ is a locally geometric derived stack, locally of finite presentation over $k$ (e.g., by \cite[Proposition 4.1.12]{HPT26} with $P= \{ \ast \}$).
	The monodromy equivalence identifies $\mathbf{Loc}(X)^{[0,0]}$ with the classical derived character stack
\[
\mathbf{Loc}(X)^{[0,0]} \simeq \bigsqcup_{r \in \N} \mathbf{Map}(\Pi_\infty(X), \mathrm{B}GL_r) \ .
\]
	It follows that the classical truncation $\mathbf{Loc}(X)^\heartsuit$ of $\mathbf{Loc}(X)^{[0,0]}$ is identified with the classical character stack parametrizing representations of the fundamental group of $X$, and its good moduli space with the classically defined character variety of $X$.
\end{recollection}

\subsection{Hochschild homology of constructible sheaves categories}\label{subsection:HO_Cons}

	In this paragraph we give an explicit description of the morphism $\HO \colon \C \to \M_\C$ for $\C \coloneqq \Cons_P(X; \Mod_k)$.
	Recall the notations of \cref{recollection:simple_perverse_sheaves}.

\begin{lemma}\label{lemma:HH_Cons}
	Let $(X,P)$ a complex Whitney stratified space.
	For each $p \in P$, set $i_p \colon X_p \to X$ the canonical inclusion and choose a point $x_p \in X_P$.
	Then we have canonical equivalences
\[
\HH(\Cons_P(X; \Mod_k)) \simeq \bigoplus_{p \in P} \HH(\Loc(X_p; \Mod_k)) \simeq \bigoplus_{p \in P} \HH(\Mod_{C_\bullet(\Omega_{x_p}X_p; k))}) \ ,
\]
where the first equivalence is given by $\oplus_{p \in P} \HH((i_{\leq p})_\# (j_{p,!})$.
\end{lemma}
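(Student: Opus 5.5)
We argue by induction on the cardinality of the finite poset $P$, repeatedly applying \cref{lemma:splitting_HH} to the récollements of \cref{proposition:recollements_Cons}. Choose a maximal element $p \in P$. Then $U \coloneqq \{p\}$ is open in the Alexandroff topology, so $Z \coloneqq P \setminus \{p\}$ is closed. By \cref{proposition:recollements_Cons}, the functors $i^\ast \colon \Cons_P(X;\Mod_k) \to \Cons_Z(X_Z;\Mod_k)$ and $j^\ast \colon \Cons_P(X;\Mod_k) \to \Cons_{\{p\}}(X_p;\Mod_k) = \Loc(X_p;\Mod_k)$ form a récollement in $\Pr_k^{\mathrm{L}}$. \Cref{lemma:splitting_HH} then gives an equivalence
\[
\HH(i_\ast) \oplus \HH(j_!) \colon \HH(\Cons_Z(X_Z;\Mod_k)) \oplus \HH(\Loc(X_p;\Mod_k)) \xrightarrow{\sim} \HH(\Cons_P(X;\Mod_k)) .
\]
Here $i_\#$ is identified with $i_\ast$: the closed pushforward is fully faithful and is the left adjoint of $i^!$ by \cref{remark:special_adjoints}. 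Since $p$ is maximal, $X_{\leq p} = X$ and $j = j_p$, so the summand for $p$ is indeed $\HH((i_{\leq p})_\ast j_{p,!})$ with $i_{\leq p} = \mathrm{id}$.

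To apply induction, we need $(X_Z, Z)$ to still satisfy the hypotheses. A closed union of strata of a complex Whitney stratified space is again Whitney stratified, by condition (4) of \cref{Whitney_strat_def}. Alternatively, we only use the récollement statement, which \cref{proposition:recollements_Cons} supplies for every closed $Z$. Applying the inductive hypothesis to $X_Z$ gives
\[
\HH(\Cons_Z(X_Z;\Mod_k)) \simeq \bigoplus_{q \in Z} \HH(\Loc(X_q;\Mod_k)),
\]
with maps $\HH((i'_{\leq q})_\ast j_{q,!})$, where $i'_{\leq q} \colon X_{\leq q} \to X_Z$. Composing with $i_\ast$ and using $i_\ast \circ (i'_{\leq q})_\ast \simeq (i_{\leq q})_\ast$ together with functoriality of $\HH$ identifies each summand map with $\HH((i_{\leq q})_\ast j_{q,!})$. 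This proves the first equivalence and identifies the map as stated.

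The second equivalence follows from the monodromy equivalence of \cref{recollection:monodromy}, $\Loc(X_p;\Mod_k) \simeq \Mod_{C_\bullet(\Omega_{x_p}X_p;k)}$. This applies because each stratum $X_p$ is a connected manifold by \cref{Whitney_strat_def}-(2), and $\HH$ is invariant under equivalences of categories.

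The main technical point is bookkeeping rather than a conceptual obstacle. First, the induced sum of maps must be canonical and independent of the order in which maximal strata are peeled off. For this we note that the final map is described intrinsically as $\oplus_p \HH((i_{\leq p})_\ast j_{p,!})$, and an equivalence with this description does not depend on the order. Second, we must check that $(i_{\leq p})_\ast j_{p,!}$ preserves compact objects, so that it lies in $\Prlo_k$. This follows from \cref{lemma:finite_type_récollement} and \cref{proposition:Whitney_finite_type}, which ensure all the categories involved are dualizable, so that $\HH$ is defined on them.
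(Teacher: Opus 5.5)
Your proof is the paper's proof with the bookkeeping written out: apply \cref{lemma:splitting_HH} recursively to the récollements of \cref{proposition:recollements_Cons}, peeling off an open stratum $X_p$ for a maximal $p$, and then use the monodromy equivalence of \cref{recollection:monodromy} on each stratum.

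One slip needs fixing. A maximal $p$ need not be the maximum of $P$: with two incomparable maximal strata, $X_{\le p} \neq X$. So the claim $X_{\le p} = X$, $i_{\le p} = \mathrm{id}$ is false in general. What you need instead is the factorization $j = i_{\le p}\circ j_p$ of the open immersion $j \colon X_p \hookrightarrow X$. This gives $j_! \simeq (i_{\le p})_\ast j_{p,!}$, since the right adjoint $j_p^\ast i_{\le p}^!$ agrees with $j^\ast$ because $X_p$ is open in $X$. That is exactly the stated summand map.

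Relatedly, $i_\ast$ and $j_!$ preserve compact objects because their right adjoints $i^!$ and $j^\ast$ commute with colimits. This is not a consequence of the categories being dualizable.
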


\begin{proof}
	Recall that, by definition, the poset $P$ is finite.
	The first equivalence follows directly by applying recursively \cref{lemma:splitting_HH}, which is possible thanks to \cref{proposition:recollements_Cons}.
	The second equivalence follows directly by \cref{recollection:monodromy}.
\end{proof}

	\cref{lemma:splitting_HO_description} and \cref{lemma:HH_Cons} yield the following

\begin{corollary}\label{corollary:HO_Cons}
	Let $(X,P)$ a complex Whitney stratified space.
	For each $p \in P$, set $i_p \colon X_p \to X$ the canonical inclusion and choose a point $x_p \in X_P$.
	Under the equivalence of \cref{lemma:HH_Cons}, the map $\HO(\Cons_P(X; \Mod_k)) \to \mathrm{H}^0\mathrm{R}\Gamma(\mathbf{Cons}_P(X); \OO_{\mathbf{Cons}_P(X)})$ is identified with
\begin{align*}
\HO \colon & \bigoplus_{p \in P} \HH(\Mod_{C_\bullet(\Omega_{x_p}X_p; k))}) \to \mathrm{H}^0\mathrm{R}\Gamma(\M_\C, \OO_{\M_\C}) \\
& HO(([\gamma_p])_{p \in P})([x_F]) = \sum_{p \in P} tr(\rho_p(\gamma_p)) \ ,
\end{align*}
where $x_F \colon \Spec(k) \to \mathbf{Cons}_P(X)$ corresponds to $F \in \Cons_{P, \omega}(X; \Mod_k)$, $\gamma_p \in C_0(\Omega_{x_p}X_p; k)$ and $\rho_p(\gamma_p)$ is the quasi-isomorphism of the stalk at $x_p$ of $i_p^\ast F \in \Loc(X_p; \Mod_k)$ induced by the monodromy equivalence \cref{recollection:monodromy}.
\end{corollary}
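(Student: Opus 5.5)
The statement follows by combining \cref{lemma:HH_Cons} with \cref{lemma:splitting_HO_description}, applied inductively along a filtration of $P$ by closed subsets. I would first fix a linear extension $p_1, \dots, p_n$ of the partial order on $P$, chosen so that each $Z_m \coloneqq \{p_1,\dots,p_m\}$ is closed, with open complement $U_m$. I would then induct on $|P|$. For the inductive step, take a maximal stratum $p$, which is open; write $U = \{p\}$ and $Z = P \setminus \{p\}$. \cref{proposition:recollements_Cons} gives a récollement
\[
(i^\ast \colon \Cons_P(X;\Mod_k) \to \Cons_Z(X_Z;\Mod_k), \ j^\ast \colon \Cons_P(X;\Mod_k) \to \Loc(X_p;\Mod_k))
\]
in $\Prlo_k$. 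By \cref{proposition:Whitney_finite_type} and \cref{lemma:finite_type_récollement}-$(2)$, all three categories are of finite type, so each is equivalent to $\LMod$ of a smooth algebra (\cref{lemma_smoothness}). This puts us in the setting of \cref{lemma:splitting_HO_description}.

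Applying \cref{lemma:splitting_HO_description} together with the commutative square \eqref{diagram:HO_commutative}, $\HO_\C$ on a pair of classes equals $(i^\ast)^\#\HO_\Z + (j^\ast)^\#\HO_\U$. Evaluating at $x_F$ gives $\HO_\Z(\text{class})(i^\ast F) + \HO_\U(\gamma_p)(j^\ast F)$. Two computations then remain.

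The $\U$-term is computed from \cref{recollection:monodromy}, which gives $\Loc(X_p;\Mod_k) \simeq \Mod_{C_\bullet(\Omega_{x_p}X_p;k)}$. The point $j^\ast F$ corresponds to the stalk $(j^\ast F)_{x_p} = (i_p^\ast F)_{x_p}$, equipped with its monodromy action $\rho_p$. \cref{proposition:HO_description}, with $A = C_\bullet(\Omega_{x_p}X_p;k)$ and $r = \gamma_p$, then gives $tr(\rho_p(\gamma_p))$.

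The $\Z$-term is handled by the inductive hypothesis applied to $(X_Z, Z)$, which is again complex Whitney stratified by \cref{Whitney_strat_def}-$(4)$. It gives $\sum_{q\in Z} tr(\rho_q(\gamma_q))$, computed on $i^\ast F$. Since $i_q^\ast F \simeq (i_q^Z)^\ast i^\ast F$ for $q \in Z$, these monodromies agree with the ones for $F$. Summing the two terms gives the formula.

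The main obstacle is checking compatibility of identifications. \cref{lemma:HH_Cons} uses the maps $\HH((i_{\le p})_\# j_{p,!})$ to build the splitting. In contrast, \cref{lemma:splitting_HO_description} uses the inverse splitting $\HH(i_\#)\oplus\HH(j_\#)$, with $j_\# = j_!$, and then evaluates through the pullbacks $i^\ast, j^\ast$. I would verify inductively that the composite of the splittings along the filtration agrees with the direct sum of the maps $\HH((i_{\le p})_\# j_{p,!})$. This uses $(i_{\le p})_\# = i_\# \circ (i'_{\le p})_\#$ for the nested closed immersions, and the fact that $\HH$ of a composite is the composite.

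A second compatibility must also be checked: that $\HO$ is natural with respect to the maps $\M_\C \to \M_\Z$ induced by right adjoints, as in \cref{remark:functoriality_moduli_of_objects}. I would prove this using the description of $\HO$ as a limit over points $\C^\omega \to \Perf(B)$ in the proof of \cref{lemma:HO}. Since $i^\ast$ and $j^\ast$ have left adjoints $i_\#$ and $j_!$ preserving compacts, precomposition with these left adjoints realizes the maps of moduli. Naturality then holds at the level of each point, and hence in the limit.
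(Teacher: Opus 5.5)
Your proposal is correct and follows the paper's route. The paper obtains this corollary only by the one-line remark that \cref{lemma:splitting_HO_description} combined with the recursive splitting of \cref{lemma:HH_Cons} yields it; your induction over a maximal (open) stratum, your use of \cref{proposition:HO_description} for the $\Loc(X_p;\Mod_k)$-summand, and your checks of $i_\# \circ (i'_{\le q})_\# \simeq (i_{\le q})_\#$ and of naturality of $\HO$ simply supply the details the paper leaves implicit (and, since the right adjoint of $(i_{\le p})_\# j_{p,!}$ is $i_p^\ast$, naturality applied to each summand directly would even let you skip the induction).
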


\begin{remark}
	In the statement of \cref{corollary:HO_Cons} we have been slightly imprecise.
	The elements $\gamma_p \in C_0(\Omega_{x_p}X_p; k)$ are linear combinations of loops in $X_p$ based at $x_p$, hence the expression "$\rho(\gamma_p)$" does not really makes sense.
	Nevertheless its trace does makes sense, defined as the linear combination of the traces of a chosen presentation of $\gamma_P$, which is all we need for the statement to make sense.
\end{remark}

\section{Perverse character varieties}

	In this paragraph we recall the results from \cite{Lam25Perv} about the existence of perverse character varieties.

\subsection{Existence of perverse character varieties}

    Let $k$ be a discrete noetherian ring containing $\mathbb{Q}$ and $k \subset \kappa$ with $\kappa$ an algebraically closed field.
    We have the following

\begin{theorem}[{\cite[Theorem 4.3.33]{Lam25Perv}}]\label{good_Perv_algebraic}
    Let $(X, P)$ be a complex Whitney stratified space and $\mathfrak{p} \colon P \to \ZZ$ the middle perversity.
    The algebraic stack $t_0{}^{\mathfrak{p}}\mathbf{Perv}_P(X)$ admits a separated good moduli space $t_0{}^{\mathfrak{p}}\Perv_P(X)$ whose connected components are of finite presentation over $k$.
	Moreover, the $\kappa$-points of $t_0{}^{\mathfrak{p}}\Perv_P(X)$ parametrize semisimple perverse sheaves with perfect stalks.
	Furthermore, $t_0{}^{\mathfrak{p}}\Perv_P(X)$ admits a natural derived enhancement ${}^{\mathfrak{p}}\Perv_P(X)$ which is a derived good moduli space for ${}^{\mathfrak{p}}\mathbf{Perv}_P(X)$.
\end{theorem}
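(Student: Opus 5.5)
The plan is to apply the existence criterion of Alper--Halpern-Leistner--Heinloth \cite[Theorem A]{AHLH23} to the classical truncation $t_0{}^{\mathfrak{p}}\mathbf{Perv}_P(X)$. This criterion applies to an algebraic stack of finite type over a noetherian base with affine diagonal. It says that if the stack is $\Theta$-reductive and $S$-complete, and (automatically since $\mathbb{Q} \subset k$) has linearly reductive stabilizers, then it admits a separated good moduli space. Then I would identify the closed points and invoke \cref{AHPS23} for the derived enhancement.

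\textbf{Step 1: reduction to finite type pieces.} By \cref{openness_flatness_perv} and \cref{geometric_heart}, $t_0{}^{\mathfrak{p}}\mathbf{Perv}_P(X)$ is an algebraic stack locally of finite presentation over $k$. By \cref{affine_diagonal} it has affine diagonal. Next I would decompose it into open and closed pieces according to discrete invariants that are locally constant in flat families. A natural choice is the ranks of the stalk cohomologies $\pi_i(i_p^\ast F)_{x_p}$ for $p \in P$ and $i$ in the finite range allowed by $\mathfrak{p}$. Each piece should be quasi-compact: fixing these ranks bounds the monodromy data in the finite presentation of $\Cons_P(X;\Mod_k)$ given by \cref{proposition:Whitney_finite_type}. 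So each piece is of finite type, and it suffices to construct a separated good moduli space on each piece. The union of these is then a good moduli space whose connected components are of finite presentation.

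\textbf{Step 2: $\Theta$-reductivity and $S$-completeness.} This is the main obstacle. Take a DVR $R$ and the regular two-dimensional stacks $\Theta_R = [\mathbb{A}^1_R/\mathbb{G}_m]$ and $\overline{\mathrm{ST}}_R$. For each of these I must show that a ${}^{\mathfrak{p}}\tau$-flat family on the complement of the closed point extends uniquely to a ${}^{\mathfrak{p}}\tau$-flat family on the whole stack. My first attempt is the strategy of \cite[Section 7]{AHLH23} for moduli of objects in abelian categories, carried out as follows.
\begin{enumerate}
\item Write the family on the punctured stack $U$ as an object of $\Cons_{P,\omega}(X;\QCoh(U))$.
\item Push it forward along the open immersion $U \hookrightarrow \Theta_R$ (respectively $\overline{\mathrm{ST}}_R$).
\item Apply the perverse truncation ${}^{\mathfrak{p}}\pi_0$, which is available thanks to \cref{lemma:Perv_admissible}.
\item Check that the result is again flat, by the analogue of the fact that reflexive modules over a two-dimensional regular ring are flat.
\end{enumerate}
For item 4 I would work stalkwise, using the recollements of \cref{proposition:recollements_Cons} and induction on $|P|$ to reduce to the case of local systems on each stratum. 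For local systems the statement is the classical extension statement for representations of $\pi_1$ over the rings in question.

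\textbf{Step 3: closed points and the derived enhancement.} By \cref{good_moduli_properties}-(4), the $\kappa$-points of the good moduli space are the closed $\kappa$-points of the stack. The category ${}^{\mathfrak{p}}\Perv_{P,\omega}(X;\Mod_\kappa)$ is of finite length, so any object degenerates along a $\Theta$-map, namely its Jordan--Hölder filtration via the Rees construction, to its semisimplification. Conversely, a semisimple object admits no nontrivial such degeneration. Hence by \cite[Lemma 3.24]{AHLH23} the closed points are exactly the semisimple perverse sheaves with perfect stalks. The simple objects are described in \cref{recollection:simple_perverse_sheaves}. Finally, \cref{AHPS23}-(1), applied to the geometric derived stack ${}^{\mathfrak{p}}\mathbf{Perv}_P(X)$, produces a derived good moduli space ${}^{\mathfrak{p}}\Perv_P(X)$ whose truncation is $t_0{}^{\mathfrak{p}}\Perv_P(X)$. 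This uses the comparison remark following \cref{AHPS23} to match the two notions of good moduli space.
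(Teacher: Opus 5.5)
Your overall architecture is the right one: AHLH applied to quasi-compact open and closed pieces, Jordan--H\"older degenerations to identify the closed points, and \cref{AHPS23} for the derived enhancement. Step 3 is fine. However, two load-bearing steps fail as stated.

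\textbf{Step 1: the invariants are not locally constant.} The ranks of individual stalk cohomologies are not locally constant in ${}^{\mathfrak p}\tau$-flat families, because perverse sheaves have stalks spread over several degrees and these ranks can jump. Here is an explicit example.
\begin{itemize}
\item Take $X=\mathbb{C}$ stratified by $\{0\}$ and $\mathbb{C}^\ast$, with $j\colon\mathbb{C}^\ast\hookrightarrow\mathbb{C}$.
\item Take $A=k[\lambda^{\pm1}]$ and let $\mathcal{L}$ be the rank one local system of free $A$-modules on $\mathbb{C}^\ast$ with monodromy $\lambda$.
\item Set $F=j_\ast\mathcal{L}[1]$.
\end{itemize}
Since $F\otimes_A M\simeq j_\ast(\mathcal{L}\otimes_A M)[1]$ is perverse for every discrete $M$, $F$ is a flat family over the connected base $\mathbb{G}_m$. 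But its stalk at $0$ is $F_0\simeq[A\xrightarrow{\lambda-1}A]$. The fibre of $F_0$ at $\lambda_0$ is acyclic for $\lambda_0\neq 1$, and has one-dimensional cohomology in two degrees at $\lambda_0=1$. So your pieces are not open and closed, the stack is not their disjoint union, and good moduli spaces on them do not assemble into one for the whole stack.

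The correct invariant is the local Euler--Poincar\'e index of \cref{definition:EP_index}. It is locally constant because stalks are perfect; see \cref{lemma:Cons_chi} and \cref{perv_chi_stack_def}. With this invariant, however, quasi-compactness of the pieces is no longer automatic: fixing $\chi$ allows cancellation between degrees, so it does not bound stalk ranks. You need a genuine boundedness argument. For example, bound the ranks of the Jordan--H\"older factors $(i_{\le p})_\ast(j_p)_{!\ast}L$ starting from the maximal strata. At a point of $X_p$, the new factors contribute $\pm\mathrm{rk}\,L$. Factors coming from larger strata contribute at most a constant times their already bounded ranks. This is the quasi-compactness content of \cref{theorem:Perv_chi_qc}.

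\textbf{Step 2, item 4: the reduction to local systems fails.} This item is the real content of the proof, and the proposed mechanism does not work. ${}^{\mathfrak p}\tau$-flatness is not a stratumwise flatness condition. In the example above, $F$ is flat, yet $i_0^\ast F$ has the torsion module $A/(\lambda-1)$ as its only cohomology. Moreover, ${}^{\mathfrak p}\pi_0$ does not commute with $i_p^\ast$; only restriction to open unions of strata is ${}^{\mathfrak p}\tau$-exact. So the recollement induction cannot reduce the extension problem over $\Theta_R$ or $\overline{\mathrm{ST}}_R$ to flat local systems on the strata.

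The argument has to be run inside the abelian heart of ${}^{\mathfrak p}\tau_R$ on $\Cons_P(X;\Mod_R)$, in the style of the abelian-category arguments of \cite{AHLH23}:
\begin{itemize}
\item Over the DVR $R$, an object $E$ of the heart is flat if and only if $\pi\colon E\to E$ is a monomorphism in the heart.
\item A map from $\Theta_R\setminus 0$ is a flat $E$ together with a filtration $(G_i)_i$ of $E_K$. It extends via the Rees object of $(E\cap G_i)_i$. These terms are torsion-free as subobjects of $E$, and $E/(E\cap G_i)\hookrightarrow E_K/G_i$ is also torsion-free.
\item A map from $\overline{\mathrm{ST}}_R\setminus 0$ is a pair of flat $E,E'$ with $E_K\simeq E'_K$. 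It extends via the graded object $\bigoplus_n(\pi^nE\cap E')$.
\end{itemize}
Your items 1--3 essentially compute these intersections. What actually has to be proved is that the intersections are again pseudo-perfect, i.e.\ have perfect stalks. This is a noetherianity property of the perverse heart over a regular base, and it is where \cref{perverse_t_structure}-(3) enters. Your sketch does not supply this. It is the kind of input provided by the general results of \cite[Section 3]{Lam25Perv} on moduli of flat objects, on which the cited theorem rests.
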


\begin{notation}
	From now on, when in it exists, we refer to the algebraic space ${}^{\mathfrak{p}}\Perv_P(X)$ as the perverse character variety.
\end{notation}

	\cref{good_Perv_algebraic} can be made more precise.
	Indeed the general results of \cite[Section 3]{Lam25Perv} allow one to prove the existence of good moduli space for moduli of flat objects $\M_\C^\heartsuit$ in great generality.
	In order to apply the results of \textit{loc.cit.}, one needs decompose $t_0{}^{\mathfrak{p}}\Perv_P(X)$ in open and closed substacks that are quasi-compact.
	
\begin{definition}
	Let $X$ be a stratified space and $\chi \colon X \to \ZZ$ be a function.
	We say that $\chi$ is constructible if there exists a stratification $(X,P)$ of $X$ such that $\chi$ is constant on the strata of $P$.
	In this case, we also say that $\chi$ is $P$-constructible.
\end{definition}
	
\begin{definition}\label{definition:EP_index}
	Let $(X, P)$ be a stratified space and $K$ a field.
	For $F \in \Cons_{P, \omega}(X; \Mod_K)$, define the local Euler-Poincaré index of $F$ by 
\begin{align*}
	\chi(F) \colon & X \to \ZZ \\
	& x \mapsto \chi(F_x) \ .
\end{align*}
\end{definition}

\begin{definition}\label{perv_Cons_stack_def}
	Let $(X, P)$ be a stratified space and let $\mathfrak{p} \colon P \to \ZZ$ be a function.
	Let $\chi \colon X \to \ZZ$ be a constructible function.
	We define the sub-presheaf of $\mathbf{Cons}_P(X)$
\[
\mathbf{Cons}_P^{\chi}(X) \colon \dAff_k^{op} \to \mathrm{Spc}
\]
by sending $\Spec(A) \in \dAff_k$ to the maximal $\infty$-subgrupoid of $\Cons_{P, \omega}(X; \Mod_A)$ spanned by those objects $F$ for which the following property holds: for every field $K$ and every $A \to \pi_0(A) \to K$ we have $\chi(F_x \otimes_A K) = \chi(x)$ for every $x \in X$.
\end{definition}

\begin{lemma}\label{lemma:Cons_chi}
	Let $(X,P)$ be a conically stratified space with locally weakly contractible strata.
	Assume that $P$ is finite and that each stratum has a finite number of connected components.
	Then the canonical morphism 
\[
\mathbf{Cons}_P^{\chi}(X) \to \mathbf{Cons}_P(X)
\]
is representable by an open and closed immersion.
\end{lemma}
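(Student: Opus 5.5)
We reduce to showing that the locus where the stalk Euler characteristics are locally constant in families is open and closed. Because $\chi$ is constructible, we may assume (after refining if necessary, or by noting that the condition only depends on values on strata) that $\chi$ is constant on each connected component of each stratum. Since $P$ is finite and each stratum has finitely many connected components, we may choose finitely many points $x_1,\dots,x_n$, one in each connected component of each stratum. For $F \in \Cons_{P,\omega}(X;\Mod_A)$, the restriction $i_p^\ast F$ is locally constant with perfect stalks. For $x$ in the component of $x_i$, the stalks $F_x$ and $F_{x_i}$ are therefore equivalent perfect $A$-modules: this uses local weak contractibility of strata and the monodromy equivalence of \cref{recollection:monodromy} applied to that component. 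Stalks commute with base change $-\otimes_A K$, since pullback of sheaves commutes with extension of scalars. Hence the condition defining $\mathbf{Cons}^\chi_P(X)$ is equivalent to requiring $\chi(F_{x_i}\otimes_A K)=\chi(x_i)$ for the finitely many $i$ and all residue fields $K$.

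Next we check representability. Each stalk functor $F\mapsto F_{x_i}$ gives a morphism of derived stacks $\mathbf{Cons}_P(X)\to \mathbf{Perf}$, the moduli of perfect complexes. This is a morphism in the sense of \cref{remark:functoriality_moduli_of_objects}, or more directly: pseudo-perfect families have perfect stalks, and stalks are compatible with base change. The subfunctor $\mathbf{Cons}^\chi_P(X)$ is then the fibre product of $\mathbf{Cons}_P(X)$ with $\prod_i \mathbf{Perf}^{\chi(x_i)}$. Here $\mathbf{Perf}^{e}\subset\mathbf{Perf}$ is the substack of perfect complexes whose fibres at all points have Euler characteristic $e$. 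Since open and closed immersions are stable under base change and finite intersection, it suffices to show that $\mathbf{Perf}^{e}\hookrightarrow \mathbf{Perf}$ is an open and closed immersion.

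For this last step, take $M\in\Perf(A)$. The function $\mathfrak q\mapsto \chi(M\otimes_A\kappa(\mathfrak q))$ on $\Spec\pi_0(A)$ is locally constant. To see this, work Zariski locally: there $M$ is represented by a finite complex of finite free $\pi_0$-modules after truncation, or one uses that $M\otimes_A \pi_0(A)$ is perfect over $\pi_0(A)$ and hence locally a bounded complex of finite projectives. The Euler characteristic at a point is then the alternating sum of ranks, and the ranks of finite projectives are locally constant. Therefore $\{\chi=e\}$ is a union of connected components, that is, open and closed. This condition also depends only on the underlying space, and it is manifestly stable under base change, with $\chi$ computed after passing to $K$. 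So it represents an open and closed subscheme of $\Spec A$, which gives the claim.

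The main obstacle is the first reduction: justifying that stalks at different points of one connected component of a stratum agree in families over an arbitrary derived $A$, uniformly in $A$. I would handle it by viewing $i_p^\ast F$ as an object of $\Loc(X_p;\Mod_A)\simeq\Fun(\Pi_\infty(X_p),\Mod_A)$. In this description a path from $x$ to $x_i$ gives an equivalence of stalks that is functorial in $A$.
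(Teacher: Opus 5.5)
Your proposal is correct and follows essentially the argument the paper relies on: the paper defers to the proof of \cite[Lemma 4.3.26]{Lam25Perv}, transported from ${}^{\mathfrak{p}}\mathbf{Perv}_P(X)$ to $\mathbf{Cons}_P(X)$, and your version matches it. It uses finitely many stalk maps to $\mathbf{Perf}$, one for each connected component of each stratum, pulls back the open and closed loci where the fibrewise Euler characteristic of a perfect complex takes a fixed value, and relies on the finiteness hypotheses so that only a finite intersection is needed. One small correction concerns your reduction on $\chi$: you cannot refine $P$, since it is fixed in the stack, but if $\chi$ is not constant on the connected components of the strata then no family over a base with $\pi_0(A)\neq 0$ satisfies the condition, so the locus is the empty substack and is trivially open and closed.
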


\begin{proof}
	The same proof of \cite[Lemma 4.3.26]{Lam25Perv} works replacing the perverse character stack with the stack of constructible sheaves.
\end{proof}

\begin{definition}\label{perv_chi_stack_def}
	Let $(X, P)$ be a complex Whitney stratified space and let $\mathfrak{p} \colon P \to \ZZ$ be a function.
	Let $\chi \colon X \to \ZZ$ be a constructible function.
	We define the substack ${}^{\mathfrak{p}}\mathbf{Perv}_P^{\chi}(X) \subset{}^{\mathfrak{p}}\mathbf{Perv}_P(X)$ by the pullback square
\[
\begin{tikzcd}[sep=small]
	{{}^{\mathfrak{p}}\mathbf{Perv}_P^{\chi}(X)} && {{}^{\mathfrak{p}}\mathbf{Perv}_P(X)} \\
	\\
	{\mathbf{Cons}^{\chi}_P(X)} && {\mathbf{Cons}_P(X)}
	\arrow[from=1-1, to=1-3]
	\arrow[from=1-1, to=3-1]
	\arrow[from=1-3, to=3-3]
	\arrow[from=3-1, to=3-3]
\end{tikzcd}
\]
\end{definition}

\begin{remark}
	Unraveling the definitions, the derived stack ${}^{\mathfrak{p}}\mathbf{Perv}_P^{\chi}(X)$ of \cref{perv_chi_stack_def} sends $\Spec(A) \in \dAff_k$ of  to the maximal $\infty$-subgrupoid of $\Cons_{P, \omega}(X; \Mod_A)$ spanned by those objects $F$ for which the following property holds: $F$ is ${}^{\mathfrak{p}}\tau$-flat over $A$ and for every field $K$ and every $A \to \pi_0(A) \to K$ we have $\chi(F_x \otimes_A K) = \chi(x)$ for every $x \in X$.
\end{remark}

	Let $k$ be a ring containing $\mathbb{Q}$ and $k \subset \kappa$ with $\kappa$ an algebraically closed field.
    We have the following

\begin{theorem}\label{theorem:Perv_chi_qc}
	In the setting of \cref{perv_chi_stack_def}, let $\mathfrak{p}\colon X \to P$ be the middle perversity.
	The canonical morphism
\[
{}^{\mathfrak{p}}\mathbf{Perv}_P^{\chi}(X) \to {}^{\mathfrak{p}}\mathbf{Perv}_P(X)
\]
is representable by an open and closed immersion.
	Moreover ${}^{\mathfrak{p}}\mathbf{Perv}_P^{\chi}(X)$ is quasi-compact and admits a derived good moduli space ${}^{\mathfrak{p}}\mathbf{Perv}_P^{\chi}(X)$ whose $\kappa$-points parametrize semisimple perverse sheaves with fixed Euler-Poincaré index $\chi$.
	Moreover the algebraic space $t_0{}^{\mathfrak{p}}\mathbf{Perv}_P^{\chi}(X)$ is of finite presentation over $k$.
\end{theorem}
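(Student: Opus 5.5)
The plan is to deduce everything from the corresponding statement for $\mathbf{Cons}^{\chi}_P(X)$ together with the existence results of \cite{Lam25Perv}. The first claim follows from \cref{lemma:Cons_chi} by base change. The square in \cref{perv_chi_stack_def} is a pullback, and open and closed immersions are stable under pullback. Hence ${}^{\mathfrak{p}}\mathbf{Perv}_P^{\chi}(X) \to {}^{\mathfrak{p}}\mathbf{Perv}_P(X)$ is representable by an open and closed immersion.

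To apply \cref{lemma:Cons_chi} we check its hypotheses. A complex Whitney stratified space is conically stratified, by Thom--Mather. Its strata are manifolds, hence locally weakly contractible. The poset $P$ is finite by \cref{Whitney_strat_def}, and the strata are connected.

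For quasi-compactness, I would work on the classical truncation. Quasi-compactness is a topological property and can be checked there. I would give a bound on the $\kappa$-points of $t_0{}^{\mathfrak{p}}\mathbf{Perv}_P^{\chi}(X)$, in the following steps.

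\begin{enumerate}
\item Every perverse sheaf $F$ with fixed local Euler--Poincaré index $\chi$ has finite length in the heart.
\item Its Jordan--Hölder factors are of the form $(i_{\leq p})_\ast (j_p)_{!\ast} L$, by \cref{recollection:simple_perverse_sheaves}.
\item The local Euler characteristic is additive in short exact sequences of perverse sheaves, because stalks are exact functors of triangles.
\item Additivity bounds both the number of factors and the ranks of the local systems $L$. The key point is that the generic rank on $X_p$ of a factor supported on $\overline{X_p}$ is $\pm$ the rank of $L$, with a sign determined by $\mathfrak{p}(p)$. Inducting downwards on $p$ then shows that only finitely many rank vectors $(\mathrm{rk}\, i_p^\ast F)_p$ can occur.
\item Using the presentation of $\mathbf{Cons}_P(X)$ via the monodromy/exit-path description, objects with bounded stalk ranks and bounded amplitude lie in a quasi-compact open substack. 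Perverse sheaves have stalk amplitude controlled by $\mathfrak{p}$, and the relevant substack is a finite union of quotients of affine representation schemes of a finite type category by products of $GL_n$.
\end{enumerate}

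This gives quasi-compactness.

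For the good moduli space, I would apply the existence criterion of \cite[Section 3]{Lam25Perv}, in the form of \cite[Theorem A]{AHLH23}. The remaining inputs are as follows.
\begin{itemize}
\item Affine diagonal is \cref{affine_diagonal}.
\item $\Theta$-reductivity and S-completeness hold for moduli of objects in the heart of an admissible $t$-structure satisfying openness of flatness (\cref{openness_flatness_perv}), and are established in \emph{loc.\ cit.}
\item Quasi-compactness is the previous step.
\end{itemize}
Together these give a good moduli space of $t_0{}^{\mathfrak{p}}\mathbf{Perv}_P^{\chi}(X)$. \cref{AHPS23} then lifts it to a derived good moduli space.

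Its $\kappa$-points are closed points of the stack by \cref{good_moduli_properties}-$(4)$. These are the polystable objects, i.e. the semisimple perverse sheaves with index $\chi$. Finite presentation follows from \cref{good_moduli_properties}-$(7)$, using that the stack is of finite type (quasi-compact and locally of finite presentation) over $k$ noetherian.

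I expect the main obstacle to be the quasi-compactness step, specifically the argument that fixing $\chi$ bounds stalk ranks even though stalk Euler characteristics can cancel between degrees. I would handle this by first bounding the lengths and ranks of the simple constituents as above. Then semicontinuity, applied to each graded piece of a Jordan--Hölder filtration, bounds the total stalk dimension of $F$ by the sum over its factors.
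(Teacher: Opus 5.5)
Your first claim (pulling back \cref{lemma:Cons_chi} along the square of \cref{perv_chi_stack_def}) is exactly the paper's argument. For the middle part the paper proves nothing new. It invokes \cref{good_Perv_algebraic}, i.e.\ \cite[Theorem 4.3.33]{Lam25Perv}, which already contains quasi-compactness of the $\chi$-pieces, the existence of the (derived) good moduli space, and the identification of its $\kappa$-points with semisimple objects. Reconstructing this via \cite[Theorem A]{AHLH23} and \cref{AHPS23} is a legitimate alternative, but your boundedness induction skips an input. At a point $x_p \in X_p$, a factor $(i_{\leq r})_\ast (j_r)_{!\ast} L$ with $r > p$ contributes $\chi\bigl((j_r)_{!\ast}L\bigr)(x_p)$, and this is not a function of $\mathrm{rk}\,L$. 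For example, take $\mathbb{C}$ stratified by $\{0\}$ and $\mathbb{C}^\ast$: for a rank one $L$, the stalk Euler characteristic of its intermediate extension at the origin has absolute value $1$ or $0$ according to whether the monodromy is trivial. So the rank data above $p$ does not control what remains for the new factors on $X_p$. You need the uniform estimate $\bigl|\chi\bigl((j_r)_{!\ast}L\bigr)(x_p)\bigr| \leq C_{r,p}\,\mathrm{rk}\,L$, which holds because these stalks are truncations of cohomology of links with coefficients in $L$. With it, the downward induction does bound the lengths and ranks of the Jordan--H\"older factors.

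The step that does not go through as written is finite presentation. \cref{good_moduli_properties}-$(7)$ requires $k$ to be \emph{excellent}, not merely noetherian, and noetherian rings containing $\mathbb{Q}$ need not be excellent. Moreover, in the setting of the statement $k$ is only assumed to be a ring containing $\mathbb{Q}$. The missing idea is the paper's reduction to $\mathbb{Q}$:
\begin{itemize}
\item ${}^{\mathfrak{p}}\mathbf{Perv}_P^{\chi}(X)$ is the base change along $\mathbb{Q} \subset k$ of its $\mathbb{Q}$-analogue;
\item property $(7)$ applies over the excellent ring $\mathbb{Q}$;
\item good moduli spaces are stable under base change, so the good moduli space over $k$ is the base change of a $\mathbb{Q}$-space of finite type, hence of finite presentation over $k$.
\end{itemize}
The same reduction also removes your silent uses of noetherianity of $k$ elsewhere. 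You need it to conclude that ${}^{\mathfrak{p}}\mathbf{Perv}_P^{\chi}(X)$, being open inside your bounded quasi-compact substack, is itself quasi-compact. You also need it to apply \cite{AHLH23}, which requires a noetherian base.
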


\begin{proof}
	The first claim follows directly from \cref{lemma:Cons_chi} (see also \cite[Lemma 4.3.26]{Lam25Perv}).
	The second part of the statement follows by \cref{good_Perv_algebraic}.
	For the last claim, notice that ${}^{\mathfrak{p}}\mathbf{Perv}_P(X) \in \dSt_k$ is the basechange of its $\mathbb{Q}$-analogue ${}^{\mathfrak{p}}\mathbf{Perv}_P(X)_\mathbb{Q} \in \dSt_\mathbb{Q}$ along $\mathbb{Q} \subset k$.
	Hence it is enough to prove that the statement holds over $\mathbb{Q}$, which follows by \cref{good_moduli_properties}-$(7)$.
\end{proof}

\begin{remark}\label{remark:good_moduli_Cons}
	When $\mathfrak{p}$ is the trivial perversity of \cref{definition:zero_perversity}, the above theorem holds more generally and the Euler-Poincaré index coincide with the rank of the stalks (see \cite[Section 3]{Lam25Stokes}).
	Notice that the notations in \textit{loc.cit.} are slightly different and reflect the fact that the Euler-Poincaré index of a constructible sheaf in the heart of the standard $t$-structure coincide with the rank of its stalks.
\end{remark}

\begin{notation}\label{notation:good_moduli_Cons}
	In the setting \cref{remark:good_moduli_Cons}, we denote the perverse character stack for the trivial perversity by $\mathbf{Cons}_P(X)^{[0,0]}$ and its good moduli by $\Cons_P(X)^{[0,0]}$, accordingly with \cite{Lam25Stokes}.
	The underlying classical algebraic stacks are denoted respectively by $\mathbf{Cons}_P(X)^\heartsuit$ and $\Cons_P(X)^\heartsuit$.
\end{notation}

\subsection{Quasi-affineness of perverse character varieties}\label{subsection:quasi_affineness}
	
	In this paragraph we prove the main result of the present paper.
	More precisely, we show that the sections provided by the morphism $\HO \colon \HH(\C) \to \mathrm{H}^0\mathrm{R}\Gamma(\M_\C, \OO_{\M_\C})$ are enough to separate points of ${}^{\mathfrak{p}}\mathbf{Perv}_P(X)$, for any perversity $\mathfrak{p}$.
	This yields the quasi-affineness of perverse character varieties.\\ \indent
	We will need a classical result about representations of finitely presented groups.
    
\begin{definition}
	Let $\Gamma$ be a group, $k$ a field and $V$ a finite-dimensional $k$-vector space.
	Let $\rho \colon G \to \mathrm{GL}(V)$ be a representation.
	The character associated to $\rho$ is the function
\begin{align*}
tr_\rho \colon &G \to k\\
& g \mapsto tr(\rho(g)) \ ,
\end{align*}
where $tr$ is the trace.
\end{definition}

	We will need the following classical result.
\begin{lemma}\label{lemma:characters_separate_points}
	Let $\Gamma$ be a finitely generated group and $\kappa$ an algebraically closed field of characteristic zero.
	Let $\rho_1, \rho_2$ be finite-dimensional semisimple $\kappa$-representations of $\Gamma$.
	Then $\rho_1$ and $\rho_2$ are isomorphic if and only if their characters agree.
\end{lemma}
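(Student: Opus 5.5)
The \enquote{only if} direction is immediate: if $\rho_1 \simeq \rho_2$ via some $\phi \in \mathrm{GL}$, then $\rho_2(g) = \phi\rho_1(g)\phi^{-1}$, and the trace is conjugation invariant. For the converse we pass to the group algebra. Let $A \coloneqq \kappa[\Gamma]$. A finite-dimensional representation of $\Gamma$ is the same as a finite-dimensional left $A$-module, and the character $tr_\rho$ extends linearly to a $\kappa$-linear functional $tr_\rho \colon A \to \kappa$. Equality of characters on $\Gamma$ therefore gives equality of these functionals on all of $A$. Note that finite generation of $\Gamma$ plays no essential role; it only ensures that the representations form a variety.

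Since $\rho_1$ and $\rho_2$ are semisimple, we decompose
\[
\rho_i \simeq \bigoplus_{S} S^{\oplus m_i(S)},
\]
where $S$ runs over pairwise non-isomorphic simple representations that occur in either of them. There are finitely many such $S$, say $S_1, \ldots, S_r$. Then $tr_{\rho_i} = \sum_j m_i(S_j)\, tr_{S_j}$. It suffices to prove that the functionals $tr_{S_1}, \ldots, tr_{S_r}$ are linearly independent over $\kappa$. Then $m_1(S_j) = m_2(S_j)$ as elements of $\kappa$, and because $\mathrm{char}\,\kappa = 0$ the map $\ZZ_{\geq 0} \to \kappa$ is injective, so the multiplicities agree as integers and $\rho_1 \simeq \rho_2$.

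The key step, and the main obstacle, is this linear independence. We plan to use the Jacobson density theorem together with Schur's lemma. Since $\kappa$ is algebraically closed, Schur gives $\End_A(S_j) = \kappa$ for each $j$. Because the $S_j$ are pairwise non-isomorphic simple modules, the density theorem (equivalently, the Artin--Wedderburn theorem applied to the image of $A$) shows that the map
\[
A \to \prod_{j=1}^r \End_\kappa(S_j)
\]
is surjective. For each $j$ we can therefore choose $a_j \in A$ acting as a rank-one idempotent on $S_j$ and as $0$ on $S_l$ for $l \neq j$. Then $tr_{S_l}(a_j) = \delta_{jl}$, which forces any linear relation $\sum_l c_l\, tr_{S_l} = 0$ to have all $c_l = 0$. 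This is where algebraic closedness is used: over a non-closed field the endomorphism rings could be division algebras, and the traces would need a more careful treatment. Characteristic zero is used only in the final step converting multiplicities.
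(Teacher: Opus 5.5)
Your proof is correct and complete. The paper gives no proof of this lemma: it calls it a classical result and uses it as a black box in \cref{proposition:separating_Loc}, so there is no argument in the paper to compare yours against.

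What you wrote is the standard textbook argument the paper is relying on:
\begin{enumerate}
\item pass to $\kappa[\Gamma]$-modules;
\item show that the characters of pairwise non-isomorphic simple modules are linearly independent, using Schur's lemma and the density theorem, by evaluating on elements $a_j$ that act as a rank-one idempotent on $S_j$ and as $0$ on the other $S_l$, so that $\mathrm{tr}_{S_l}(a_j)=\delta_{jl}$;
\item use $\mathrm{char}\,\kappa=0$ to lift equality of multiplicities from $\kappa$ to $\ZZ$.
\end{enumerate}

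Your remarks on the hypotheses are also right. Finite generation of $\Gamma$ plays no role. Algebraic closedness is what makes the dual basis $a_j$ so clean. Characteristic zero is genuinely needed: in characteristic $p$, the trivial representation $\kappa^{\oplus(p+1)}$ and the trivial representation $\kappa$ both have constant character $1$, yet they are not isomorphic.
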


\begin{corollary}\label{proposition:separating_Loc}
	Let $X$ be a connected locally weakly contractible topological space and let $\kappa$ be an algebraically closed field.
	Let $L_1,L_2 \in \Loc(X; \Mod_\kappa)^\heartsuit$ semisimple of finite rank with $L_1 \not\simeq L_2$.
	Assume that $\Pi_\infty(X)$ is a compact object in $\Cat_\infty$.
	Then there is a global section in the image of
\[
\HO_X \colon \HH_0(\Loc(X); \Mod_k) \to \mathrm{H}^0\mathrm{R}\Gamma(\mathbf{Loc}(X), \OO_{\mathbf{Loc}(X)})
\]
that separate $L_1$ from $L_2$.
\end{corollary}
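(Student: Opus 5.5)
Under the monodromy equivalence of \cref{recollection:monodromy}, I would reduce the statement to \cref{lemma:characters_separate_points}, and then use \cref{proposition:HO_description} to realise characters as values of $\HO_X$. Fix $x \in X$ and set $\Gamma = \pi_1(X,x)$. Since $\Pi_\infty(X)$ is compact in $\Cat_\infty$, the group $\Gamma$ is finitely presented, hence finitely generated. The heart $\Loc(X;\Mod_\kappa)^\heartsuit$ of finite-rank objects is identified with finite-dimensional $\kappa$-representations of $\Gamma$. Under this identification semisimple local systems correspond to semisimple representations, and $L_1 \not\simeq L_2$ means $\rho_1 \not\simeq \rho_2$.

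By \cref{lemma:characters_separate_points}, since $\kappa$ has characteristic zero (recall $k \supset \mathbb{Q}$), there is some $g \in \Gamma$ with $tr(\rho_1(g)) \neq tr(\rho_2(g))$. Choose a loop $\gamma$ based at $x$ representing $g$. It gives a $0$-chain $[\gamma] \in C_0(\Omega_x X; k)$, and hence a class in $\HH_0(C_\bullet(\Omega_x X;k)) \simeq \HH_0(\Loc(X;\Mod_k))$. Now apply \cref{proposition:HO_description} with $A = C_\bullet(\Omega_xX;k)$, which is connective, and $\C = \Loc(X;\Mod_k)$, which is of finite type because $\Pi_\infty(X)$ is compact. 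Evaluating the section $s_\gamma := \HO_X([\gamma])$ at the $\kappa$-point given by $L_i$, that is, after base change along $k \to \kappa$, yields $tr(\rho_i(\gamma)) = tr(\rho_i(g))$. This uses that the monodromy action of $A$ on the stalk $(L_i)_x$ factors through $\pi_0(A) = k[\Gamma]$, acting via $\rho_i$.

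To obtain a section that vanishes at one point and not the other, as the definition of separating requires, I would take $s = s_\gamma - tr(\rho_2(g))\cdot 1$. One subtlety is that the constant $tr(\rho_2(g))$ lies in $\kappa$ rather than $k$. To avoid this, I would either work over $\kappa$ throughout, or replace $s_\gamma$ by a polynomial in finitely many such traces. The cleanest route is to note that the $k$-subalgebra generated by the $s_\gamma$ already separates the two $\kappa$-points. Its image in the coordinate ring, after base change, then gives a separating function in the sense of \cref{corollary:quasi_affineness_good}. The constant section $1$ is itself in the image of $\HO_X$ up to scaling, since it is the trace at the trivial loop divided by the rank, and the rank is constant on components and invertible in characteristic zero.

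I expect the main obstacle to be the compatibility check behind the evaluation formula: that the trace of $\rho(r)$ for $r \in A$ a loop agrees with the classical trace of the monodromy $\rho_i(g)$. In other words, one must check that the identification $\HH(\Loc) \simeq \HH(C_\bullet(\Omega_xX))$ and the universal family correspond under the monodromy equivalence, and that the trace computation is compatible with base change to $\kappa$. Both follow from the naturality of $\HO$ in points $\Spec(B) \to \M_\C$ together with \cref{definition:trace}, so this should be routine bookkeeping rather than a genuine difficulty.
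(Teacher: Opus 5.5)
\textbf{Gap 1: the case of different ranks.} Your core argument matches the paper's. Monodromy reduces to semisimple representations of the finitely presented group $\pi_1(X,x)$. Then \cref{lemma:characters_separate_points} (using $\operatorname{char}\kappa = 0$) gives a loop $\gamma$ with $tr(\rho_1(\gamma)) \neq tr(\rho_2(\gamma))$. Next, $\HO_X([\gamma])$ evaluates to these traces. Finally, one corrects by a multiple of the constant loop $c_x$, whose image computes the rank.

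The gap is in how you produce the ``constant section'' $1$. Dividing $c_x$ by the rank is not an operation inside $\HH_0$, because the rank is a locally constant function, not a scalar. What you can actually write down is $s = \HO_X\bigl(\gamma - \tfrac{tr(\rho_2(\gamma))}{r_2}c_x\bigr)$. It vanishes at $L_2$ and takes the value $tr(\rho_1(\gamma)) - \tfrac{r_1}{r_2}\,tr(\rho_2(\gamma))$ at $L_1$. This value is guaranteed to be non-zero only when $r_1 = r_2$.

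When the ranks differ, it can vanish for every $\gamma$. For $L_1 = L_2^{\oplus 2}$ one has $tr\circ\rho_1 = 2\, tr\circ\rho_2$, so every section $s$ in the image of $\HO_X$ satisfies $s(L_1) = 2\,s(L_2)$. You therefore need the case distinction the paper makes. If $r_1 \neq r_2$, the two points lie in different connected components and are told apart by the rank function $\HO_X(c_x)$; by the example just given, nothing in the image of $\HO_X$ can do better there than take different values. Only after reducing to $r_1 = r_2 = r$ does your construction become the paper's section $\HO_X\bigl(\gamma - \tfrac{tr(\rho_2(\gamma))}{r}c_x\bigr)$.

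\textbf{Gap 2: the coefficient $tr(\rho_2(\gamma)) \in \kappa$.} Your first option is exactly what the paper does, tacitly: work with $\kappa$-coefficients, i.e.\ regard $\gamma - \tfrac{tr(\rho_2(\gamma))}{r}c_x$ as an element of $C_0(\Omega_x X;\kappa)$ after base change to $\kappa$.

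Your ``cleanest route'' via the $k$-subalgebra does not work, for two reasons. First, products of the $s_\gamma$ are not in the image of $\HO_X$, which is only a $k$-linear subspace, so you would be proving a different statement. Second, polynomials with $k$-coefficients need not separate two $\kappa$-points at all. Take $X = S^1$, $k = \mathbb{Q}$, $\kappa = \mathbb{C}$, and rank-one local systems with monodromies $\pi$ and $1$. Every $f \in \mathbb{Q}[t^{\pm 1}]$ vanishing at $\pi$ is zero, so no $k$-rational function vanishes at the first point without vanishing at the second. So drop that route and keep the base change to $\kappa$.
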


\begin{proof}
	Let $x \in X$.
	Since $\Pi_\infty(X)$ is a compact object in $\Cat_\infty$, the fundamental group $\pi_1(X,x)$ of $X$ is finitely presented.
	Via the monodromy equivalence, the locally constant sheaves $L_1, L_2$ correspond to semisimple representations $\rho_1, \rho_2$ of the fundamental group.
	Hence by \cref{lemma:characters_separate_points}, we can choose $\gamma \in \pi_1(X,x)$ such that $tr(\rho_1(\gamma)) \neq tr(\rho_2(\gamma))$.
	Up to choosing a representative, we can consider $\gamma$ as an element of $C_0(\Omega_x X; \Mod_\kappa)$.
	Recall that if the ranks of the representations $\rho_1$ and $\rho_2$ are different, then they lie in different connected component.
    In particular, the representations $\rho_1$ and $\rho_2$ are separated by image of the constant loop $c_x$ at $x$.
    Indeed, by \cref{corollary:HO_Cons} in the case $P = \ast$, we get that $\HO_X(c_x)$ represents the trace of the identity, i.e., the rank of a representation.
	So we can assume that $L_1$ and $L_2$ have the same rank and we denote it by $r$.
    Since $L_1 \not\simeq L_2$, by \cref{lemma:characters_separate_points} there exists $\gamma \in \pi_1(X,x)$ such that $tr(\rho_1(\gamma)) \neq tr(\rho_2(\gamma))$.
	Again by \cref{corollary:HO_Cons}, we get that
\[
\HO_X\left(\gamma - \frac{tr(\rho_2(\gamma))}{\dim_\kappa(L_2)_x}c_x\right) \in \mathrm{H}^0(\mathbf{Loc}(X), \OO_{\mathbf{Loc}(X)})
\]
is zero on the point representing $L_2$ and non-zero on the point representing $L_1$.
\end{proof}

Recall the description of simple perverse sheaves from \cref{recollection:simple_perverse_sheaves}.
	We will need the following easy lemma:

\begin{lemma}\label{lemma:maximal_elements_Perv}
	Under the assumptions of \cref{lemma:Cons_chi}, let $\kappa$ be a field over $k$ and $\mathfrak{p} \colon P \to \ZZ$ a perversity.
	Consider
\[
x_F,y_G \colon \Spec(\kappa) \to t_0{}^{\mathfrak{p}}\mathbf{Perv}_P(X) \subset \mathbf{Cons}_P(X)
\]
representing semisimple perverse sheaves $F, G \in {}^{\mathfrak{p}}\Perv_{P, \omega}(X; \Mod_\kappa)$.
	Assume that the conclusion of \cref{perverse_t_structure}-$(3)$ holds for $(X,P)$ and $A = \kappa$ and write
\[
F \coloneqq \bigoplus_{p \in P} (i_{\leq p})_\ast (j_{p})_{!\ast} F_{p} \ , \qquad G \coloneqq \bigoplus_{p\in P} (i_{\leq p})_\ast (j_{p})_{!\ast} G_{p} \ ,
\]
where $F_{p} \in \Loc_\omega(X_{p}; \Mod_\kappa)^\heartsuit$ and $G_{p} \in Loc_\omega(X_{p}; \Mod_\kappa)^\heartsuit$ are non-zero semisimple locally constant sheaves of finite rank. Let $P_F$ (respectively $P_G$) be the subset $\,\{p \in P \,\vert F_p\neq 0\} \subset P$ (respectively $\,\{p \in P \,\vert G_p\neq 0\} \subset P$) and let $q$ be a maximal element in $P_F$.
If $x$ and $y$ lie in the same connected component of $\mathbf{Cons}_P(X)$, then:
\begin{enumerate}\itemsep=0.2cm
	\item $q \in P_G$ and it is a maximal element in $P_G$;
	\item $\dim_\kappa (F_{q})_x = \dim_\kappa (G_{q})_x$ for each for any $x \in X_{q}$.
\end{enumerate}	 
\end{lemma}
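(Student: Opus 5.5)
The plan is to compare the local Euler--Poincaré indices of $F$ and $G$ stratum by stratum, using that these indices are constant on connected components of $\mathbf{Cons}_P(X)$.

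\emph{Step 1: the indices agree.} The stalks of $F$ and $G$ are perfect $\kappa$-complexes, so $\chi(F)$ and $\chi(G)$ are $P$-constructible functions $X \to \ZZ$. Since $x_F$ is a $\kappa$-point of $\mathbf{Cons}^{\chi(F)}_P(X)$, \cref{lemma:Cons_chi} says this substack is open and closed in $\mathbf{Cons}_P(X)$. It therefore contains the whole connected component of $x_F$, and in particular $y_G$. Unwinding \cref{perv_Cons_stack_def} with $K = \kappa$, this gives $\chi(F) = \chi(G)$ as functions on $X$.

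\emph{Step 2: restricting to a maximal stratum.} Let $H = \bigoplus_{p \in P_H} (i_{\leq p})_\ast (j_p)_{!\ast} H_p$ be a semisimple perverse sheaf as in the statement, and let $q$ be maximal in $P_H$. We compute $i_q^\ast H$ summand by summand.
\begin{itemize}
\item If $p \in P_H$ and $q \not\leq p$, then $X_q \cap X_{\leq p} = \emptyset$, so the summand indexed by $p$ restricts to zero on $X_q$.
\item If $q \leq p \in P_H$, maximality of $q$ forces $p = q$.
\end{itemize}
Hence
\[
i_q^\ast H \simeq j_q^\ast (j_q)_{!\ast} H_q \simeq H_q[n_q]
\]
for a shift $n_q$ depending only on $\mathfrak{p}(q)$ and not on $H$. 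This uses that $j_q^\ast$ is ${}^{\mathfrak{p}}\tau$-exact and that $j_q^\ast$ of the image of ${}^{\mathfrak{p}}j_{q,!}H_q \to {}^{\mathfrak{p}}j_{q,\ast}H_q$ is $H_q$, suitably placed in the perverse heart of $X_q$. Consequently, for every $x \in X_q$,
\[
\chi(H)(x) = (-1)^{n_q}\dim_\kappa (H_q)_x \neq 0,
\]
since $H_q$ is non-zero. Strata are connected, so this rank is constant on $X_q$. More generally, if $\chi(H)$ is non-zero somewhere on $X_{q'}$, then $X_{q'}$ lies in the support of some summand, so there exists $p \in P_H$ with $q' \leq p$.

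\emph{Step 3: transferring maximality.} By Steps 1 and 2, $\chi(G) = \chi(F)$ is non-zero on $X_q$, so the set $\{p \in P_G \mid q \leq p\}$ is non-empty. Since $P$ is finite, pick a maximal element $p'$ of $P_G$ with $q \leq p'$. Applying Step 2 to $G$ at $p'$ shows that $\chi(F) = \chi(G)$ is non-zero on $X_{p'}$. Hence there is $p'' \in P_F$ with $q \leq p' \leq p''$, and maximality of $q$ in $P_F$ gives $p'' = q$. Therefore $p' = q$, so $q \in P_G$ and $q$ is maximal in $P_G$, which proves (1). Now Step 2 applies to both $F$ and $G$ at $q$ with the same shift $n_q$, and the equality $\chi(F) = \chi(G)$ on $X_q$ gives $\dim_\kappa (F_q)_x = \dim_\kappa (G_q)_x$, which proves (2).

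The main obstacle is the stalk computation in Step 2: showing that the intermediate extension restricts on its open stratum $X_q$ to $H_q$ with a shift depending only on $\mathfrak{p}(q)$. I would deduce this from the ${}^{\mathfrak{p}}\tau$-exactness of $j_q^\ast$ together with the fact that $j_q^\ast j_{q,!} \simeq j_q^\ast j_{q,\ast} \simeq \mathrm{id}$, using the description in \cref{recollection:simple_perverse_sheaves}.
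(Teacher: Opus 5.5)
Your proposal is correct and follows essentially the same route as the paper. Both arguments deduce $\chi(F)=\chi(G)$ from the open-and-closed decomposition of \cref{lemma:Cons_chi}, then compare local Euler--Poincaré indices on a stratum that is maximal for the relevant summands; the paper argues by contradiction with a maximal element of $I=\{q\}\cup P_G$, whereas you argue directly. Your explicit shift $n_q$, with the sign $(-1)^{n_q}$ cancelling in (2), is a small improvement over the paper's unsigned identity $\chi(G)(x_{q'})=\dim_\kappa G_{q'}$.
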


\begin{proof}
	If $q$ is not a maximal element of the list $I \coloneqq \{q \} \cup P_G \subset P$, then $F$ and $G$ lie in different connected components.
	Indeed, if it is not the case, consider a maximal element of $q' \in I$ and consider an element $x_{q'} \in X_{q'}$.
	Then we get
\[
\chi(F)(x_{q'}) = 0 \neq \dim_\kappa G_{q'} = \chi(G)({x_{q'}}) \ .
\]
	Hence $q$ must be a maximal element of $I$ by \cref{lemma:Cons_chi}.
	If $q \notin I$, by replacing the roles of $F$ and $G$ above one sees that $x$ and $y$ do not lie in the same connected component.
	This proves $(1)$.
	Item $(2)$ follows since $q$ is maximal and if $x,y$ lie in the same connected component, then by \cref{lemma:Cons_chi} we must have
\[
\dim_\kappa (F_{q})_x = \chi(F)(x_{q}) = \chi(G)({x_{q}}) = \dim_\kappa (G_{q})_x \ .
\]
	This proves $(2)$.
\end{proof}

	We are now ready to prove our main result:
\begin{theorem}\label{theorem:separating_points_Perv}
	Under the assumptions of \cref{lemma:maximal_elements_Perv}, let $\kappa$ be of characteristic zero.
	Assume that $\Pi_\infty(X_p)$ is a compact object in $\Cat_\infty$ for each $p \in P$.
	Then the structure sheaf $\OO_{t_0\mathbf{Cons}_P(X)}$ separates points representing semisimple perverse sheaves of ${}^{\mathfrak{p}}\Perv_{P, \omega}(X; \Mod_\kappa)$.
	More precisely, such points can be separated by global sections in the image of the morphism
\[
\HO_X \colon \HH_0(\Cons_P(X; \Mod_k)) \to \mathrm{H}^0\mathrm{R}\Gamma(\mathbf{Cons}_P(X), \OO_{\mathbf{Cons}_P(X)})  \ .
\]
\end{theorem}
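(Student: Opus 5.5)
We argue by induction on the size of the stratification, or equivalently by peeling off maximal strata. Take two points $x_F, y_G$ of $t_0{}^{\mathfrak{p}}\mathbf{Perv}_P(X)$ representing non-isomorphic semisimple perverse sheaves. We write them as in \cref{lemma:maximal_elements_Perv}:
\[
F \simeq \bigoplus_{p \in P} (i_{\leq p})_\ast (j_p)_{!\ast} F_p, \qquad G \simeq \bigoplus_{p \in P} (i_{\leq p})_\ast (j_p)_{!\ast} G_p .
\]
By \cref{corollary:HO_Cons}, a class $([\gamma_p])_{p\in P}$ goes to the function $\sum_p tr(\rho_p(\gamma_p))$, where $\rho_p$ is the monodromy of $i_p^\ast F$ at $x_p$. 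The traces here are taken on the perfect complex $(i_p^\ast F)_{x_p}$, so they are alternating sums over cohomology sheaves. In particular, a class supported on a single stratum $p$ evaluates to the character of the virtual local system $\sum_n (-1)^n \mathcal{H}^n(i_p^\ast F)$.

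First, if $x_F$ and $y_G$ lie in different connected components of $\mathbf{Cons}_P(X)$, we separate them by a locally constant function. The open and closed decomposition $\mathbf{Cons}_P(X) = \bigsqcup_\chi \mathbf{Cons}^\chi_P(X)$ of \cref{lemma:Cons_chi} gives idempotent global sections, and $\HO$ of constant loops already detects $\chi$: $\HO(c_{x_p})$ computes $\chi(F)(x_p)$. Any difference in $\chi$ is therefore seen by a combination of the sections $\HO(c_{x_p})$, and we proceed as in the rank step of \cref{proposition:separating_Loc} by subtracting a suitable constant. So we may assume $x_F, y_G$ lie in the same component.

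By \cref{lemma:maximal_elements_Perv}, a maximal $q \in P_F$ is then also maximal in $P_G$, and $F_q$, $G_q$ have the same rank. Since $q$ is maximal in both supports, the summands $(i_{\leq p})_\ast (j_p)_{!\ast}$ with $p \neq q$ restrict to zero on $X_q$. Hence $i_q^\ast F \simeq F_q[-\mathfrak{p}(q)]$ and $i_q^\ast G \simeq G_q[-\mathfrak{p}(q)]$, up to the appropriate shift convention. There are two cases.

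\textbf{Case $F_q \not\simeq G_q$.} We apply \cref{proposition:separating_Loc} on $X_q$, using the hypothesis that $\Pi_\infty(X_q)$ is compact. This produces a combination $\gamma$ of loops at $x_q$ whose characters differ, normalized so that the value vanishes at $G_q$. Its image via the summand of stratum $q$ in \cref{lemma:HH_Cons} evaluates, through \cref{corollary:HO_Cons}, to $\pm tr(\rho(\gamma))$ with the same sign $(-1)^{\mathfrak{p}(q)}$ on both points. It therefore separates $x_F$ from $y_G$.

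\textbf{Case $F_q \simeq G_q$.} We remove this summand: $F = F' \oplus (i_{\leq q})_\ast (j_q)_{!\ast} F_q$ and likewise for $G$, with $F' \not\simeq G'$ still semisimple. The difficulty is that global functions are evaluated on $F$, not on $F'$. The key point is additivity. Each $\HO(\gamma)$ is a trace, and traces are additive on direct sums, so $\HO(\gamma)(x_F) - \HO(\gamma)(y_G) = \HO(\gamma)(x_{F'}) - \HO(\gamma)(y_{G'})$ because the common summand cancels. The problem thus reduces to finding a class in $\HH_0$ whose values on $F'$ and $G'$ differ, which is the statement for the pair $(F', G')$ with strictly fewer simple summands. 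We induct on the total length of $F \oplus G$, say. The separating section for $(x_F, y_G)$ is then $s - s(y_G)\cdot \mathbf{1}$, which is a legitimate global section since constants lie in the image via $\HO$ of multiples of $c_{x_p}$, or simply $1 \in \OO$.

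The main obstacle is this reduction step, specifically checking that the recursion is well-founded and does not get stuck on the connected-component test. After removing a common summand, $F'$ and $G'$ may lie in different components. In that case we separate them by the Euler characteristic functions $\HO(c_{x_p})$, which are additive and so again cancel the common part. Note also that the sign $(-1)^{\mathfrak{p}(q)}$ is uniform, and \cref{lemma:characters_separate_points} needs characteristic zero, which is assumed.
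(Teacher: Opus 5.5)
Your proposal is correct and follows essentially the paper's argument: take a maximal $q\in P_F$ and use \cref{lemma:maximal_elements_Perv} to see that it is maximal in $P_G$ with $F_q,G_q$ of equal rank. You then separate via characters on $X_q$ (\cref{proposition:separating_Loc} through the $q$-summand of \cref{corollary:HO_Cons}) when $F_q\not\simeq G_q$, and otherwise cancel the common summand by additivity of traces and recurse. The differences are minor: you induct on total length instead of $|P_F|$, and you treat a mismatch of Euler--Poincar\'e indices uniformly through the sections $\HO(c_{x_p})$ rather than declaring distinct components ``nothing to prove''. This keeps the separating sections inside the image of $\HO_X$, as the statement demands; just phrase that dichotomy as $\chi(F)\neq\chi(G)$ versus $\chi(F)=\chi(G)$, which is all \cref{lemma:maximal_elements_Perv} really uses.
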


\begin{proof}
	Let $x_F, x_G \colon \Spec(\kappa) \to \mathbf{Cons}_P(X)$ parametrizing semisimple perverse sheaves $F,G \in {}^{\mathfrak{p}}\Perv_{P, \omega}(X; \Mod_\kappa)$.
	By \cref{recollection:simple_perverse_sheaves}, there exist semisimple local systems of finite rank $F_{p}, G_{p} \in Loc_\omega(X_{p}; \Mod_\kappa)^\heartsuit$ such that
\[
F \simeq \bigoplus_{p\in P} (i_{\leq p})_\ast (j_{p})_{!\ast} F_{p} \ , \qquad G \simeq \bigoplus_{p\in P} (i_{\leq p})_\ast (j_{p})_{!\ast} G_{p} \ .
\]
	Following the notation of \Cref{lemma:maximal_elements_Perv}, let $P_F$ (respectively $P_G$) be the subset $\,\{p \in P \,\vert F_p\neq 0\} \subset P$ (respectively $\,\{p \in P \,\vert G_p\neq 0\} \subset P$) and let $n$ (respectively $m$) be the cardinality of $P_F$ (respectively $P_G$). We proceed by induction on $n$. \\ \indent
	Suppose that $n=1$, and thus $P_F=\{q\}$.
	By \cref{lemma:maximal_elements_Perv}, we have that $q \in P_G$ is a maximal element of $P_G$. We also have that $F_{q}$ and $G_{q}$ are semisimple locally constant sheaves of the same rank. If $F_{q}$ and $G_{q}$ are not isomorphic in $\Loc_\omega(X_q; \Mod_\kappa)^\heartsuit$, then we are done by \cref{corollary:HO_Cons} and \cref{proposition:separating_Loc}.
	Indeed by \cite[Theorem 5.1.7-(5)]{HPT24} and \cite[Theorem 4.1.22]{Lam25Perv}, the assumptions of \cref{proposition:separating_Loc} are satisfied.
	Hence the points corresponding to $F_q$ and $G_q$ can be separated in $\mathbf{Loc}(X_q)$ by global sections in the image of
\[
\HO_{X_q} \colon \HH_0(\Loc(X_q); \Mod_k) \to \mathrm{H}^0(\mathbf{Loc}(X_q), \OO_{\mathbf{Loc}(X_q)}) \, .
\]
	By the splitting provided by \cref{corollary:HO_Cons}, it follows that $x_F,y_F \in \mathbf{Cons}_P(X)$ can be separated by sections in the image of $\HO_X$.
	Hence we are reduced to consider the case $F_{q} \simeq G_{q}$.
	We analyze separately the cases $m=1$ and $m>1$.
	If $m=1$, then $F \simeq G$ and thus the points $x$ and $y$ agree.
	In this case there is nothing to prove.
	If $m>1$, since the traces of \cref{corollary:HO_Cons} are additive in direct sums (more generally, in short exact sequences), then $x$ and $y$ lie in different connected components by \cref{lemma:Cons_chi}.
	To see this, choose a maximal $q' \in P_G\setminus \{q\}$ and consider a point of $x' \in X_{q'}$.
	Since the Euler-Poincaré index is additive in short exact sequences, we get
\begin{align*}
\chi(F)(x') & = \chi((i_{\leq q})_\ast (j_{q})_{!\ast} F_{q})(x') = \chi((i_{\leq q})_\ast (j_{q})_{!\ast} G_{q})(x') \\
& \neq \chi((i_{\leq q})_\ast (j_{q})_{!\ast} G_{q})(x') + \dim_\kappa (G_{{q'}})_{x'} = \chi(G)(x') \ .
\end{align*}
	Hence if $m>1$ there is nothing to prove. 
	We now proceed by induction, supposing the result is true for $n-1$.
	Let $q \in P_F$ be a maximal element. Again by \cref{lemma:maximal_elements_Perv}, we have that $q \in P_G$ and that $q$ is a maximal element of $P_G$.
	Arguing as above, we reduce to treat the case $F_{q} \simeq G_{q}$.
	Since the traces of \cref{corollary:HO_Cons} are additive in direct sums (more generally, in short exact sequences), we have that the structure sheaf $\OO_{t_0{}^{\mathfrak{p}}\mathbf{Perv}_P(X)}$ separates $x,y$ if and only if it separates the point representing
\[
F' \coloneqq \bigoplus_{p \in P\setminus \{q\}} (i_{\leq p})_\ast (j_{p})_{!\ast} F'_{p} \ , \qquad G' \coloneqq \bigoplus_{p \in P\setminus \{q\}} (i_{\leq p})_\ast (j_{p)_{!\ast}} G_{p}' \ ,
\]
where $F'_p$ (respectively $G'_p$) is set to be equal to $F_p$ (respectively $G_p$) if $p\neq q$, whereas we set $F'_p=0$ (respectively $G'_q=0$).
The result now follows by our inductive hypothesis, since $P_{F'}=P_F\setminus \{q\}$.
\end{proof}

\begin{corollary}\label{corollary:quasi_affine_Perv}
	Let $(X,P)$ be a complex Whitney stratified space and let $\mathfrak{p} \colon X \to \ZZ$ be the middle perversity.
	Let $k$ be a noetherian ring containing $\mathbb{Q}$.
	Then the perverse character variety $t_0 {}^{\mathfrak{p}}\Perv_P(X)$ is a disjoint union of quasi-affine scheme over $k$.
\end{corollary}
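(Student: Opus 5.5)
The plan is to decompose the perverse character variety along Euler--Poincaré indices and apply the criterion \cref{corollary:quasi_affineness_good} to each piece. By \cref{theorem:Perv_chi_qc}, the stack $t_0{}^{\mathfrak{p}}\mathbf{Perv}_P(X)$ is the disjoint union over constructible functions $\chi$ of the open and closed substacks $t_0{}^{\mathfrak{p}}\mathbf{Perv}^\chi_P(X)$. Each of these is quasi-compact and admits a good moduli space $X_\chi := t_0{}^{\mathfrak{p}}\Perv^\chi_P(X)$ of finite presentation over $k$. Good moduli spaces are compatible with open and closed decompositions, because $q_\ast$ commutes with restriction to saturated opens. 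Hence $t_0{}^{\mathfrak{p}}\Perv_P(X) \simeq \bigsqcup_\chi X_\chi$, and it suffices to show that each $X_\chi$ is quasi-affine. Each $X_\chi$ is separated, since the whole moduli space is separated by \cref{good_Perv_algebraic}, and of finite type. So \cref{corollary:quasi_affineness_good} reduces the claim to showing that $\OO_{t_0{}^{\mathfrak{p}}\mathbf{Perv}^\chi_P(X)}$ separates polystable points.

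Next I identify the polystable points. By \cref{good_moduli_properties}-(4) and \cref{good_Perv_algebraic}, the points of $X_\chi$ with values in an algebraically closed field $\kappa$ correspond to semisimple perverse sheaves in ${}^{\mathfrak{p}}\Perv_{P,\omega}(X;\Mod_\kappa)$ with index $\chi$. A point of $X_\chi$ with an arbitrary residue field is handled by passing to an algebraic closure $\kappa$ of that residue field, which has characteristic zero since $\mathbb{Q}\subset k$. Separation of points can be checked after this base change: the section $s$ vanishes at a point exactly when its pullback to $\Spec\kappa$ vanishes.

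The separating sections come from \cref{theorem:separating_points_Perv}. Given two distinct polystable points $F \not\simeq G$ of $t_0{}^{\mathfrak{p}}\mathbf{Perv}^\chi_P(X)$, that theorem produces a global section of $\OO_{\mathbf{Cons}_P(X)}$, in the image of $\HO_X$, that separates them. Restricting this section along the open immersion ${}^{\mathfrak{p}}\mathbf{Perv}^\chi_P(X) \subset \mathbf{Cons}_P(X)$ (\cref{openness_flatness_perv}) and passing to truncations gives a section of $\OO_{t_0{}^{\mathfrak{p}}\mathbf{Perv}^\chi_P(X)}$ that still separates $F$ from $G$. Via $\OO_{X_\chi}\simeq q_\ast\OO$, this is a section of $\OO_{X_\chi}$ separating the corresponding points. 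The theorem also needs the hypotheses of \cref{lemma:Cons_chi}, \cref{perverse_t_structure}-(3) and compactness of $\Pi_\infty(X_p)$. For a complex Whitney stratified space these are supplied by \cite[Theorem 5.1.7]{HPT24} and \cite[Theorem 4.1.22]{Lam25Perv}, exactly as in the proof of \cref{theorem:separating_points_Perv}. Finally, \cref{lemma:derived_quasi_affineness} (or \cref{corollary:quasi_affineness_good}) gives that each $X_\chi$ is a quasi-affine scheme, and so the disjoint union is as claimed.

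I expect the main obstacle to be the bookkeeping between field-valued points and scheme-theoretic points, together with the base ring. \cref{theorem:separating_points_Perv} separates points over an algebraically closed $\kappa$ of characteristic zero, whereas \cref{lemma:separate_points_is_quasi_finite} needs finiteness of fibres of $X_\chi \to \Spec \Gamma(X_\chi,\OO)$. My plan is to check quasi-finiteness on geometric fibres, where it amounts to injectivity on $\kappa$-points. This is exactly what the separating sections give: two distinct $\kappa$-points are distinguished by some $s$ taking different values, for instance after subtracting a constant multiple of the rank section $\HO(c_x)$. I would then appeal to Zariski's Main Theorem as in \cref{lemma:derived_quasi_affineness}, without needing separation at non-closed points.
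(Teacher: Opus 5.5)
Your proposal is correct and follows essentially the same route as the paper: split $t_0{}^{\mathfrak{p}}\mathbf{Perv}_P(X)$ into the quasi-compact open and closed pieces of \cref{theorem:Perv_chi_qc}, identify polystable points with semisimple perverse sheaves via \cref{good_Perv_algebraic}, separate them with the $\HO_X$-sections of \cref{theorem:separating_points_Perv}, and conclude with \cref{corollary:quasi_affineness_good}. Your extra bookkeeping makes explicit details the paper leaves implicit: you check quasi-finiteness on geometric fibres, and you separate $\kappa$-points by distinct values of some $\HO_X(\gamma)$ rather than by a section vanishing at one point, which avoids needing coefficients in $\kappa$.
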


\begin{proof}
	By \cref{good_Perv_algebraic} the polystable points of $t_0 {}^{\mathfrak{p}}\Perv_P(X)$ are the semisimple perverse sheaves.
	By \cref{openness_flatness_perv} and \cref{perverse_t_structure}, the assumptions of \cref{theorem:separating_points_Perv} are satisfied.
	It then follows directly from the definition of good moduli space and \cref{good_moduli_properties}-$(4)$ that $\OO_{t_0 {}^{\mathfrak{p}}\Perv_P(X)}$ separates points.
	The claim follows by applying \cref{corollary:quasi_affineness_good} to each connected components of $t_0 {}^{\mathfrak{p}}\Perv_P(X)$, which is possible thanks to \cref{theorem:Perv_chi_qc}.
\end{proof}

	In \cref{corollary:quasi_affine_Perv} we have shown the quasi-affineness in the important case of the middle perversity.
	Our results are completely general and apply to any perversity as long as the corresponding perverse character variety exists.
	Another example is given by the trivial perversity introduced in \cref{definition:zero_perversity}, whose associated perverse $t$-structure is the standard $t$-structure, yielding a stack $\mathbf{Cons}_P(X)^{[0,0]}$  parametrizing constructible sheaves with flat stalks (see \cref{remark:trivial_perversity} and \cref{notation:good_moduli_Cons}).
	Recall the notions of (local) categorical compactness for a stratified space $(X,P)$ from \cite[Definition 2.2.1]{PT22}).

\begin{corollary}\label{corollary:quasi_affine_Cons}
	Let $(X,P)$ be a categorically compact, locally categorically compact conically stratified space with locally weakly contractible strata and $P$ finite.
	Let $k$ be a noetherian ring containing $\mathbb{Q}$.
	The constructible character variety $\Cons_P(X)^\heartsuit$, that is, the good moduli space of $\mathbf{Cons}_P(X)^{[0,0]}$ is a disjoint union of quasi-affine schemes over $k$.
\end{corollary}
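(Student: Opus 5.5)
The plan is to run the argument of \cref{corollary:quasi_affine_Perv}, now with the trivial perversity, so that the heart is $\Cons_P(X;\Mod_\kappa)^\heartsuit$ (\cref{remark:trivial_perversity}). The output should be that $\OO$ separates the polystable points of each quasi-compact open and closed piece of $t_0\mathbf{Cons}_P(X)^{[0,0]}$.

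First, decompose by the rank function. By \cref{remark:good_moduli_Cons}, for each $P$-constructible $\chi\colon X\to\ZZ$ the substack $\mathbf{Cons}^\chi_P(X)\cap\mathbf{Cons}_P(X)^{[0,0]}$ is open and closed. This uses \cref{lemma:Cons_chi}, whose hypotheses are exactly those of the statement: conically stratified, locally weakly contractible strata, $P$ finite. The same remark says this piece is quasi-compact and admits a separated good moduli space of finite type over $k$, with $\kappa$-points the semisimple objects of the heart. Here I would rely on \cite[Section 3]{Lam25Stokes}. That reference also has to supply finiteness of the connected components of strata, which I expect categorical compactness to provide via \cite{PT22}. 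Thus $\Cons_P(X)^\heartsuit$ is the disjoint union of these good moduli spaces. By \cref{corollary:quasi_affineness_good} and \cref{thm:quasi_affine_truncation}, it then suffices to show that, for $\kappa$ algebraically closed of characteristic zero, the global sections separate any two non-isomorphic semisimple objects $F,G\in\Cons_{P,\omega}(X;\Mod_\kappa)^\heartsuit$.

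Second, I would invoke \cref{theorem:separating_points_Perv} with $\mathfrak{p}=0$. Its proof never uses the middle perversity. It uses three things:
\begin{enumerate}
\item the description of simple objects in \cref{recollection:simple_perverse_sheaves}, which needs the conclusion of \cref{perverse_t_structure}-$(3)$ for $\mathfrak{p}=0$;
\item the splitting of $\HO$ in \cref{corollary:HO_Cons}, which needs the récollements of \cref{proposition:recollements_Cons} and the finite type property;
\item compactness of each $\Pi_\infty(X_p)$ in $\Cat_\infty$, so that \cref{proposition:separating_Loc} applies stratum by stratum.
\end{enumerate}
For the trivial perversity, (1) is already available: \cref{remark:trivial_perversity} says the standard $t$-structure descends to $\Cons_{P,\omega}(X;\Mod_A)$ for $A$ discrete regular, in particular a field. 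The intermediate extension ${}^0\mathrm{Im}(j_{p,!}L\to {}^0j_{p,*}L)$ is then simply $j_{p,!}L$, so the simple objects are the $(i_{\le p})_*j_{p,!}L$ with $L$ a simple local system on $X_p$. This makes \cref{lemma:maximal_elements_Perv} transparent, since stalk ranks are literally $\chi$.

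For (2) and (3), I need that $\Cons_P(X;\Mod_k)$ is of finite type and satisfies the récollement of \cref{proposition:recollements_Cons} in this generality, and that each stratum has compact exit-path/homotopy type. I expect these to follow from \cite{PT22}: categorical compactness together with local categorical compactness gives that the exit-path $\infty$-category is compact, and that $\Cons_P(X;\Mod_k)\simeq\Fun(\mathrm{Exit}_P(X),\Mod_k)$ is of finite type. Restricting to strata then gives compact $\Pi_\infty(X_p)$, and the récollement is the usual open/closed decomposition of the exit-path category.

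The main obstacle is exactly this transfer of hypotheses. Checking that the strata inherit compactness (so that the $\pi_1(X_p)$ are finitely presented, \cref{lemma:characters_separate_points}) and that the finite-type and récollement inputs from \cite{HPT26,Lam25Perv}, stated there for Whitney stratifications, hold for categorically compact conically stratified spaces. Once these are in place, the induction on $|P_F|$ in \cref{theorem:separating_points_Perv} goes through verbatim and yields the separation, and hence the quasi-affineness of each component.
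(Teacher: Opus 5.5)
Your proposal is correct and follows the paper's own proof essentially step by step. Both decompose $\Cons_P(X)^\heartsuit$ into quasi-compact open and closed pieces via \cite{Lam25Stokes}, then reduce with \cref{corollary:quasi_affineness_good} to separating semisimple objects, and get that separation from \cref{theorem:separating_points_Perv} for the trivial perversity once \cref{perverse_t_structure}-(3) is available; the paper handles the hypothesis transfers you flag purely by citation to \cite{HPT26,Lam25Stokes,Lam25Perv}, and your observation that $(j_p)_{!\ast}=j_{p,!}$ here, so the rank function already determines each $F_p$, is a correct simplification of the induction.
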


\begin{proof}
	By \cite[Proposition 4.1.12]{HPT26}, we have that $\mathbf{Cons}_P(X)^{[0,0]}$ is representable by an algebraic stack.
	By \cite[Theorem 3.2.16]{Lam25Stokes}, its truncation admits a good moduli space $\Cons_P(X)^\heartsuit$.
	That \cref{perverse_t_structure}-$(3)$ holds for $(X,P)$ and $k$ is a special case of \cite[Proposition 4.2.5-(3)]{Lam25Perv} whose polystable points are in bijection with semisimple constructible sheaves.
	Hence the assumptions of \cref{theorem:separating_points_Perv} are satisfied.
	It follows directly from the definition of good moduli space, by \cref{good_moduli_properties}-$(4)$ that $\OO_{t_0 Perv_P(X)}$ separates points.
	The result then follows by applying \cref{corollary:quasi_affineness_good} to each connected components of $\Cons_P(X)^\heartsuit$, which is possible by \cite[Proposition 3.1.10 \& Notation 3.2.15]{Lam25Stokes}.
\end{proof}

\begin{example}
	\cref{corollary:quasi_affine_Cons} applies for example to complex Whitney stratified spaces.
	It also applies to a wide variety of conical algebraic and subanalytic stratifications.
	See \cite{HPT24, HPT26}.
\end{example}

\bibliographystyle{alpha}
\bibliography{bibliography}

\end{document}

%% file: newcommand.tex
\ifpersonal
\newcommand{\personal}[1]{\textcolor[rgb]{0,0,1}{(Personal: #1)}}
\newcommand{\discussion}[1]{\textcolor{violet}{(Discussion: #1)}}
\else
\newcommand{\personal}[1]{\ignorespaces}
\newcommand{\discussion}[1]{\ignorespaces}
\fi

\def\N{\mathbb{N}}

\def\S{\mathbb{S}}
\def\ZZ{\mathbb{Z}}

\def\C{\mathcal{C}}
\def\D{\mathcal{D}}
\def\E{\mathcal{E}}

\def\X{\mathcal{X}}

\def\Z{\mathcal{Z}}
\def\U{\mathcal{U}}
\def\M{\mathcal{M}}
\def\OO{\mathcal{O}}
\def\S{\textsf{S}}

\def\Prlo{\mathrm{Pr}^{\mathrm{L}, \omega}}

\def\Funst{\mathrm{Fun}^{\mathrm{st}}}
\def\FunR{\mathrm{Fun}^{\mathrm{R}}}

\DeclareMathOperator{\QCoh}{QCoh}
\DeclareMathOperator{\dAff}{dAff}
\DeclareMathOperator{\dSt}{dSt}

\DeclareMathOperator{\fib}{fib}
\DeclareMathOperator{\cof}{cof}
\DeclareMathOperator{\Loc}{Loc}

\DeclareMathOperator{\Hom}{Hom}
\DeclareMathOperator{\Cons}{Cons}

\DeclareMathOperator{\Fun}{Fun}
\DeclareMathOperator{\Sh}{Sh}

\DeclareMathOperator{\HH}{HH}

\DeclareMathOperator{\Mod}{Mod}
\DeclareMathOperator{\LMod}{LMod}

\DeclareMathOperator{\Map}{Map}

\DeclareMathOperator{\End}{End}
\DeclareMathOperator{\Cat}{\mathrm{Cat}}
\DeclareMathOperator{\Spc}{Spc}
\DeclareMathOperator{\ev}{ev}
\DeclareMathOperator{\coev}{coev}
\DeclareMathOperator{\Id}{Id}

\DeclareMathOperator{\Perf}{Perf}
\DeclareMathOperator{\Perv}{Perv}
\DeclareMathOperator{\Spec}{Spec}

\DeclareMathOperator{\HO}{HO}

\makeatletter
\renewenvironment{proof}[1][\relax]{\par
  \pushQED{\qed}%
  \normalfont \topsep6\p@\@plus6\p@\relax
  \trivlist
  \item[\hskip\labelsep\itshape
    \ifx#1\relax \proofname\else\proofname{} of #1\fi\@addpunct{.}]\ignorespaces
}{%
  \popQED\endtrivlist\@endpefalse
}
\makeatother

\makeatletter
\let\save@mathaccent\mathaccent
\newcommand*\if@single[3]{%
	\setbox0\hbox{${\mathaccent"0362{#1}}^H$}%
	\setbox2\hbox{${\mathaccent"0362{\kern0pt#1}}^H$}%
	\ifdim\ht0=\ht2 #3\else #2\fi
}
\newcommand*\rel@kern[1]{\kern#1\dimexpr\macc@kerna}
\newcommand*\widebar[1]{\@ifnextchar^{{\wide@bar{#1}{0}}}{\wide@bar{#1}{1}}}
\newcommand*\wide@bar[2]{\if@single{#1}{\wide@bar@{#1}{#2}{1}}{\wide@bar@{#1}{#2}{2}}}
\newcommand*\wide@bar@[3]{%
	\begingroup
	\def\mathaccent##1##2{%
		\let\mathaccent\save@mathaccent
		\if#32 \let\macc@nucleus\first@char \fi
		\setbox\z@\hbox{$\macc@style{\macc@nucleus}_{}$}%
		\setbox\tw@\hbox{$\macc@style{\macc@nucleus}{}_{}$}%
		\dimen@\wd\tw@
		\advance\dimen@-\wd\z@
		\divide\dimen@ 3
		\@tempdima\wd\tw@
		\advance\@tempdima-\scriptspace
		\divide\@tempdima 10
		\advance\dimen@-\@tempdima
		\ifdim\dimen@>\z@ \dimen@0pt\fi
		\rel@kern{0.6}\kern-\dimen@
		\if#31
		\overline{\rel@kern{-0.6}\kern\dimen@\macc@nucleus\rel@kern{0.4}\kern\dimen@}%
		\advance\dimen@0.4\dimexpr\macc@kerna
		\let\final@kern#2%
		\ifdim\dimen@<\z@ \let\final@kern1\fi
		\if\final@kern1 \kern-\dimen@\fi
		\else
		\overline{\rel@kern{-0.6}\kern\dimen@#1}%
		\fi
	}%
	\macc@depth\@ne
	\let\math@bgroup\@empty \let\math@egroup\macc@set@skewchar
	\mathsurround\z@ \frozen@everymath{\mathgroup\macc@group\relax}%
	\macc@set@skewchar\relax
	\let\mathaccentV\macc@nested@a
	\if#31
	\macc@nested@a\relax111{#1}%
	\else
	\def\gobble@till@marker##1\endmarker{}%
	\futurelet\first@char\gobble@till@marker#1\endmarker
	\ifcat\noexpand\first@char A\else
	\def\first@char{}%
	\fi
	\macc@nested@a\relax111{\first@char}%
	\fi
	\endgroup
}
\makeatother

\usetikzlibrary{decorations.markings} 
\tikzset{
  closed/.style = {decoration = {markings, mark = at position 0.5 with { \node[transform shape, xscale = .8, yscale=.4] {/}; } }, postaction = {decorate} },
  open/.style = {decoration = {markings, mark = at position 0.5 with { \node[transform shape, scale = .7] {$\circ$}; } }, postaction = {decorate} }
}

\makeatletter
\tikzcdset{
  open/.code     = {\tikzcdset{hook, circled};},
  closed/.code   = {\tikzcdset{hook, slashed};},
  open'/.code    = {\tikzcdset{hook', circled};},
  closed'/.code  = {\tikzcdset{hook', slashed};},
  circled/.code  = {\tikzcdset{markwith = {\draw (0,0) circle (.375ex);}};},
  slashed/.code  = {\tikzcdset{markwith = {\draw[-] (-.4ex,-.4ex) -- (.4ex,.4ex);}};},
  markwith/.code ={
    \pgfutil@ifundefined%
    {tikz@library@decorations.markings@loaded}%
    {\pgfutil@packageerror{tikz-cd}{You need to say %
      \string\usetikzlibrary{decorations.markings} to use arrows with markings}{}}{}%
    \pgfkeysalso{/tikz/postaction = {
      /tikz/decorate,
      /tikz/decoration={markings, mark = at position 0.5 with {#1}}}
    }
  },
}
\makeatother

\newcommand{\leftrarrows}{\mathrel{\raise.75ex\hbox{\oalign{%
  $\scriptstyle\leftarrow$\cr
  \vrule width0pt height.5ex$\hfil\scriptstyle\relbar$\cr}}}}
\newcommand{\lrightarrows}{\mathrel{\raise.75ex\hbox{\oalign{%
  $\scriptstyle\relbar$\hfil\cr
  $\scriptstyle\vrule width0pt height.5ex\smash\rightarrow$\cr}}}}
\newcommand{\Rrelbar}{\mathrel{\raise.75ex\hbox{\oalign{%
  $\scriptstyle\relbar$\cr
  \vrule width0pt height.5ex$\scriptstyle\relbar$}}}}

\makeatletter
\def\leftrightarrowsfill@{\arrowfill@\leftrarrows\Rrelbar\lrightarrows}
\newcommand{\xleftrightarrows}[2][]{\ext@arrow 3399\leftrightarrowsfill@{#1}{#2}}
\makeatother

\NewCommandCopy{\notocsection}{\section}
\xpatchcmd{\notocsection}{{1}}{{1001}}{}{}

%% file: Quasi_Affineness.bbl
\newcommand{\etalchar}[1]{$^{#1}$}
\begin{thebibliography}{BDN{\etalchar{+}}25}

\bibitem[AG14]{AG14}
Benjamin Antieau and David Gepner.
\newblock Brauer groups and {\'e}tale cohomology in derived algebraic geometry.
\newblock {\em Geom. Topol.}, 18(2):1149--1244, 2014.

\bibitem[AHL26]{AlperHalLei}
Jarod Alper and Daniel Halpern-Leistner.
\newblock {The intrinsic approach to moduli theory}, 2026.

\bibitem[AHLH23]{AHLH23}
Jarod Alper, Daniel Halpern-Leistner, and Jochen Heinloth.
\newblock Existence of moduli spaces for algebraic stacks.
\newblock {\em Inventiones mathematicae}, 234(3):949–1038, August 2023.

\bibitem[AHPS23]{AHPS23}
Eric Ahlqvist, Jeroen Hekking, Michele Pernice, and Michail Savvas.
\newblock Good {Moduli} {Spaces} in {Derived} {Algebraic} {Geometry}.
\newblock Preprint, {arXiv}:2309.16574 [math.{AG}] (2023), 2023.

\bibitem[AHR25]{AHR25}
Jarod Alper, Jack Hall, and David Rydh.
\newblock The \'etale local structure of algebraic stacks.
\newblock Preprint, {arXiv}:1912.06162 [math.{AG}] (2025), 2025.

\bibitem[Alp13]{Alp13}
Jarod Alper.
\newblock Good moduli spaces for {Artin} stacks.
\newblock {\em Ann. Inst. Fourier}, 63(6):2349--2402, 2013.

\bibitem[AOV08]{AOV08}
Dan Abramovich, Martin Olsson, and Angelo Vistoli.
\newblock Tame stacks in positive characteristic.
\newblock {\em Ann. Inst. Fourier}, 58(4):1057--1091, 2008.

\bibitem[Art69]{Art69}
Michael Artin.
\newblock On {Azumaya} algebras and finite dimensional representations of
  rings.
\newblock {\em J. Algebra}, 11:532--563, 1969.

\bibitem[BBDG18]{BBDG18}
Alexander Beilinson, Joseph Bernstein, Pierre Deligne, and Ofer Gabber.
\newblock {\em Faisceaux pervers. {Actes} du colloque ``{Analyse} et
  {Topologie} sur les {Espaces} {Singuliers}''. {Partie} {I}}, volume 100 of
  {\em Ast{\'e}risque}.
\newblock Paris: Soci{\'e}t{\'e} Math{\'e}matique de France (SMF), 2nd edition
  edition, 2018.

\bibitem[BCS24]{BCS24b}
Tristan Bozec, Damien Calaque, and Sarah Scherotzke.
\newblock Relative critical loci and quiver moduli.
\newblock {\em Ann. Sci. {\'E}c. Norm. Sup{\'e}r. (4)}, 57(2):553--614, 2024.

\bibitem[BD21]{BD21}
Christopher Brav and Tobias Dyckerhoff.
\newblock Relative {Calabi}-{Yau} structures. {II}: {Shifted} {Lagrangians} in
  the moduli of objects.
\newblock {\em Sel. Math., New Ser.}, 27(4):45, 2021.
\newblock Id/No 63.

\bibitem[BDN{\etalchar{+}}25]{BDN+25}
Chenjing Bu, Ben Davison, Andr{\'e}s~Ib{\'a}{\~n}ez N{\'u}{\~n}ez, Tasuki
  Kinjo, and Tudor P{\u{a}}durariu.
\newblock Cohomology of symmetric stacks.
\newblock Preprint, {arXiv}:2502.04253 [math.{AG}] (2025), 2025.

\bibitem[BH26]{BH26}
Qingyuan Bai and Peter~J. Haine.
\newblock Localizing invariants of constructible sheaves.
\newblock Preprint, {arXiv}:2512.12810 [math.{KT}] (2026), 2026.

\bibitem[CJ24]{CJ24}
Dustin Clausen and Mikala~{\O}rsnes Jansen.
\newblock The reductive {Borel}-{Serre} compactification as a model for
  unstable algebraic {K}-theory.
\newblock {\em Sel. Math., New Ser.}, 30(1):93, 2024.
\newblock Id/No 10.

\bibitem[CL26a]{CL26b}
Roger Casals and Wenyuan Li.
\newblock Positive microlocal holonomies are globally regular.
\newblock Preprint, {arXiv}:2409.07435 [math.{SG}] (2026), 2026.

\bibitem[CL26b]{CL26}
Merlin Christ and Enrico Lampetti.
\newblock Lagrangian structures on the derived moduli of constructible sheaves.
\newblock Preprint, {arXiv}:2603.04983 [math.{AG}] (2026), 2026.

\bibitem[Dav24]{Dav24}
Ben Davison.
\newblock Purity and 2-{Calabi}-{Yau} categories.
\newblock {\em Invent. Math.}, 238(1):69--173, 2024.

\bibitem[Del70]{Del70}
Pierre Deligne.
\newblock {\em Equations diff{\'e}rentielles {\`a} points singuliers
  r{\'e}guliers}, volume 163 of {\em Lect. Notes Math.}
\newblock Springer, Cham, 1970.

\bibitem[DHM22]{DHM22}
Ben Davison, Lucien Hennecart, and Sebastian~Schlegel Mejia.
\newblock {BPS} {Lie} algebras for totally negative 2-{Calabi}-{Yau} categories
  and nonabelian {Hodge} theory for stacks.
\newblock Preprint, {arXiv}:2212.07668 [math.{RT}] (2022), 2022.

\bibitem[DPS25]{DPS25}
Duiliu-Emanuel Diaconescu, Mauro Porta, and Francesco Sala.
\newblock Cohomological {Hall} algebras, their categorification, and their
  representations via torsion pairs.
\newblock Preprint, {arXiv}:2207.08926 [math.{AG}] (2025), 2025.

\bibitem[GM88]{GM88}
Mark Goresky and Robert MacPherson.
\newblock {\em Stratified {Morse} theory}, volume~14 of {\em Ergeb. Math.
  Grenzgeb., 3. Folge}.
\newblock Berlin etc.: Springer-Verlag, 1988.

\bibitem[Gol84]{Gol84}
William~M. Goldman.
\newblock The symplectic nature of fundamental groups of surfaces.
\newblock {\em Adv. Math.}, 54:200--225, 1984.

\bibitem[Hen24]{Hen24}
Lucien Hennecart.
\newblock Cohomological integrality for symmetric quotient stacks.
\newblock Preprint, {arXiv}:2408.15786 [math.{AG}] (2024), 2024.

\bibitem[HK25]{HK25}
Lucien Hennecart and Tasuki Kinjo.
\newblock The {BPS} decomposition theorem.
\newblock Preprint, {arXiv}:2509.21298 [math.{AG}] (2025), 2025.

\bibitem[Hoy16]{MO:168526}
Marc Hoyois.
\newblock Answer to {M}ath {O}verflow {Q}uestion 168526: Is the
  {$\infty$}-topos {$\Sh(X)$} hypercomplete whenever {$X$} is a {CW} complex?
\newblock
  \href{https://mathoverflow.net/questions/168526/}{\nolinkurl{MO:168526}},
  October 2016.

\bibitem[HPT23]{HPT23}
Peter~J. Haine, Mauro Porta, and Jean-Baptiste Teyssier.
\newblock The homotopy-invariance of constructible sheaves.
\newblock {\em Homology Homotopy Appl.}, 25(2):97--128, 2023.

\bibitem[HPT24]{HPT24}
Peter~J. Haine, Mauro Porta, and Jean-Baptiste Teyssier.
\newblock Exodromy beyond conicality.
\newblock Preprint, available at the authors' webpage
  \url{https://peterjhaine.github.io/#research}, 2024.

\bibitem[HPT26]{HPT26}
Peter~J. Haine, Mauro Porta, and Jean-Baptiste Teyssier.
\newblock The derived moduli of perverse sheaves.
\newblock Preprint, available at the authors' webpage
  \url{https://peterjhaine.github.io/#research}, 2026.

\bibitem[HSS17]{HSS17}
Marc Hoyois, Sarah Scherotzke, and Nicol{\`o} Sibilla.
\newblock Higher traces, noncommutative motives, and the categorified {Chern}
  character.
\newblock {\em Adv. Math.}, 309:97--154, 2017.

\bibitem[Kas84]{Kas84}
Masaki Kashiwara.
\newblock The {Riemann}-{Hilbert} problem for holonomic systems.
\newblock {\em Publ. Res. Inst. Math. Sci.}, 20:319--365, 1984.

\bibitem[KM97]{KM97}
Se{\'a}n Keel and Shigefumi Mori.
\newblock Quotients by groupoids.
\newblock {\em Ann. Math. (2)}, 145(1):193--213, 1997.

\bibitem[KS90]{KS90}
Masaki Kashiwara and Pierre Schapira.
\newblock {\em Sheaves on manifolds. {With} a short history ``{Les} d{\'e}buts
  de la th{\'e}orie des faisceaux'' by {Christian} {Houzel}}, volume 292 of
  {\em Grundlehren Math. Wiss.}
\newblock Berlin etc.: Springer-Verlag, 1990.

\bibitem[Lam25a]{Lam25Perv}
Enrico Lampetti.
\newblock Good moduli for moduli of objects.
\newblock Preprint, available at the authors' webpage
  \url{https://sites.google.com/view/enricolampettimath}, 2025.

\bibitem[Lam25b]{Lam25Stokes}
Enrico Lampetti.
\newblock Good moduli space for constructible sheaves and {Stokes} functors.
\newblock Preprint, available at the authors' webpage
  \url{https://sites.google.com/view/enricolampettimath}, 2025.

\bibitem[LM85]{LM85}
Alexander Lubotzky and Andy~R. Magid.
\newblock {\em Varieties of representations of finitely generated groups},
  volume 336 of {\em Mem. Am. Math. Soc.}
\newblock Providence, RI: American Mathematical Society (AMS), 1985.

\bibitem[Lur17]{Lur17}
Jacob Lurie.
\newblock Higher algebra.
\newblock Available at the authors' webpage
  \url{https://www.math.ias.edu/~lurie/}, 2017.

\bibitem[Lur18]{Lur18}
Jacob Lurie.
\newblock Spectral algebraic geometry.
\newblock Available at the authors' webpage
  \url{https://www.math.ias.edu/~lurie/}, 2018.

\bibitem[Meb84]{Meb84}
Zoghman Mebkhout.
\newblock Une {\'e}quivalence de cat{\'e}gories. {Une} autre {\'e}quivalence de
  cat{\'e}gories.
\newblock {\em Compos. Math.}, 51:51--62, 63--88, 1984.

\bibitem[MFK94]{Mum94}
David~B. Mumford, John~C. Fogarty, and Frances~C. Kirwan.
\newblock {\em Geometric invariant theory.}, volume~34 of {\em Ergeb. Math.
  Grenzgeb.}
\newblock Berlin: Springer-Verlag, 3rd enl. ed. edition, 1994.

\bibitem[Por25]{Por25}
Mauro Porta.
\newblock {\em Derived methods in moduli theory}.
\newblock Habilitation {\`a} diriger des recherches, {Universit{\'e} de
  Strasbourg}, 2025.

\bibitem[Pro75]{Pro75}
Claudio Procesi.
\newblock Finite dimensional representations of algebras.
\newblock {\em Isr. J. Math.}, 19:169--182, 1975.

\bibitem[PT22]{PT22}
Mauro Porta and Jean-Baptiste Teyssier.
\newblock Topological exodromy with coefficients.
\newblock Preprint, available at the authors' webpage
  \url{http://jbteyssier.com/}, 2022.

\bibitem[Sik12]{Sik12}
Adam~S. Sikora.
\newblock Character varieties.
\newblock {\em Trans. Am. Math. Soc.}, 364(10):5173--5208, 2012.

\bibitem[Sim92]{Sim92}
Carlos~T. Simpson.
\newblock Higgs bundles and local systems.
\newblock {\em Publ. Math., Inst. Hautes {\'E}tud. Sci.}, 75:5--95, 1992.

\bibitem[SS03]{SS03}
Stefan Schwede and Brooke Shipley.
\newblock Stable model categories are categories of modules.
\newblock {\em Topology}, 42(1):103--153, 2003.

\bibitem[{Sta}25]{stacks-project}
The {Stacks project authors}.
\newblock The stacks project.
\newblock \url{https://stacks.math.columbia.edu}, 2025.

\bibitem[TV07]{TV07}
Bertrand To{\"e}n and Michel Vaqui{\'e}.
\newblock Moduli of objects in dg-categories.
\newblock {\em Ann. Sci. {\'E}c. Norm. Sup{\'e}r. (4)}, 40(3):387--444, 2007.

\bibitem[TV08]{TV08}
Bertrand To{\"e}n and Gabriele Vezzosi.
\newblock {\em Homotopical algebraic geometry. {II}: {Geometric} stacks and
  applications}, volume 902 of {\em Mem. Am. Math. Soc.}
\newblock Providence, RI: American Mathematical Society (AMS), 2008.

\bibitem[Wei60a]{Wei60}
Andr{\'e} Weil.
\newblock On discrete subgroups of {Lie} groups.
\newblock {\em Ann. Math. (2)}, 72:369--384, 1960.

\bibitem[Wei60b]{Wei62}
Andr{\'e} Weil.
\newblock On discrete subgroups of {Lie} groups. {II}.
\newblock {\em Ann. Math. (2)}, 75:578–602, 1960.

\bibitem[Wei94]{Wei94}
Charles~A. Weibel.
\newblock {\em An introduction to homological algebra}, volume~38 of {\em Camb.
  Stud. Adv. Math.}
\newblock Cambridge: Cambridge University Press, 1994.

\end{thebibliography}
